\newtheorem{theorem}{Theorem}[section]
\newtheorem{prop}[theorem]{Proposition}
\newtheorem{lemma}[theorem]{Lemma}
\newtheorem{cor}[theorem]{Corollary}
\newtheorem{ex}[theorem]{Example}
\theoremstyle{remark}
\newtheorem{remark}[theorem]{Remark}
\def\co{\colon\thinspace}
\def\ep{\epsilon}
\def\R{\mathbb{R}}
\def\Z{\mathbb{Z}}
\def\C{\mathbb{C}}
\title{On Certain Lagrangian Submanifolds of $S^2\times S^2$ and $\C P^n$}
\author{Joel Oakley\and Michael Usher}
\address{Department of Mathematics\\University of Georgia\\Athens, GA 30602}
\email{\texttt{joakley@math.uga.edu}}
\email{\texttt{usher@math.uga.edu}}
\begin{document}
\begin{abstract}  We consider various constructions of monotone Lagrangian submanifolds of $\C P^n, S^2\times S^2$, and  quadric hypersurfaces of $\C P^n$.  In $S^2\times S^2$ and $\C P^2$ we show that several different known constructions of exotic monotone tori yield results that are Hamiltonian isotopic to each other, in particular answering a question of Wu by showing that the monotone fiber of a toric degeneration model of $\C P^2$ is Hamiltonian isotopic to the Chekanov torus.  Generalizing our constructions to higher dimensions leads us to consider monotone Lagrangian submanifolds (typically not tori) of quadrics and of $\C P^n$ which can be understood either in terms of the geodesic flow on $T^*S^n$ or in terms of the Biran circle bundle construction.  Unlike previously-known monotone Lagrangian submanifolds of closed simply connected symplectic manifolds, many of our higher-dimensional Lagrangian submanifolds are provably displaceable.
\end{abstract}

\maketitle

\section{Introduction}
A basic, though generally unattainable, goal in symplectic topology is the classification of the Lagrangian submanifolds $L$ of a given symplectic manifold $(M,\omega)$.  Such submanifolds have a pair of classical invariants, both homomorphisms defined on the group $\pi_2(M,L)$: the area homomorphism $I_{\omega}\co \pi_2(M,L)\to\R$ which  maps the homotopy class of a disk $u$ to the area $\int_{D^2}u^*\omega$; and the Maslov homomorphism $I_{\mu}\co \pi_2(M,L)\to \Z$ which maps the homotopy class of $u$ to the Maslov index of the loop of Lagrangian subspaces given by $u|_{\partial D^2}^{*}TL$ with respect to a symplectic trivialization of $u^{*}TM$.  The Lagrangian submanifold $L$ is called \emph{monotone} if there is a constant $\lambda>0$ such that $I_{\omega}=\lambda I_{\mu}$.  This is a rather restrictive hypothesis---for instance it requires that the $\omega$-area of any two-sphere in $M$ be $\frac{\lambda}{2}$ times its Chern number, so many symplectic manifolds have no monotone Lagrangian submanifolds and in those that do the parameter $\lambda$ is often (\emph{e.g.} if $M$ is simply-connected and compact) determined by the ambient manifold.  A more modest goal than the classification of all Lagrangian submanifolds is to understand the monotone Lagrangian submanifolds of a given symplectic manifold---in particular the specialization to the monotone case restricts the classical invariants in a natural way.

The notion of monotonicity was originally introduced with the somewhat technical goal of facilitating the construction of Floer homology \cite{Oh95}, though over time it has become clearer that monotonicity is a geometrically significant property.  To give a simple special case, on the sphere $S^2$ the lines of latitude (which are the fibers of the moment map for the standard Hamiltonian $S^1$-action on $S^2$) are all Lagrangian submanifolds. Among these, the  only one which is monotone is the equator; the equator is also the only line of latitude $L$ which is \emph{nondisplaceable} in the sense that for every Hamiltonian diffeomorphism $\phi$ of $S^2$ we have $\phi(L)\cap L\neq\varnothing$.  A considerable generalization of this has been proven by independently by Cho and Entov-Polterovich \cite{Cho} \cite{EPrigid}: for any monotone toric symplectic manifold the unique fiber of the moment map which is monotone is also nondisplaceable; conversely it is often (though not always) the case that all other fibers of the moment map are displaceable \cite{Mprobes}.

This paper concerns monotone Lagrangian submanifolds in complex projective space $\C P^n$ and in quadric hypersurfaces thereof.  Our normalization convention throughout will be to treat $\C P^n$ as the coisotropic reduction of the sphere of radius $\sqrt{2}$ in $\C^{n+1}$, which then induces a symplectic form on the quadric in $\C P^n$ by restriction; we will emphasize these conventions by denoting the relevant manifolds as $\C P^{n}(\sqrt{2})$ and $Q_n(\sqrt{2})$.  The four-real-dimensional cases of $\C P^2(\sqrt{2})$ and $Q_3(\sqrt{2})$ will receive special attention. Note that $Q_3(\sqrt{2})$ is symplectomorphic to $S^2\times S^2$, where the latter is equipped with a symplectic form giving area $2\pi$ to each factor.  

\subsection{Results in dimension four} Each of  $\C P^{2}(\sqrt{2})$ and $S^2\times S^2$ contains a standard  monotone Lagrangian torus, in each case sometimes called the Clifford torus: in the first case one views $\C P^{2}(\sqrt{2})$ as a compactification of the ball of radius $\sqrt{2}$ in $\C^2$ and considers the torus given as the product of the circles of radius $\sqrt{2/3}$ in each factor, while in the second case one simply takes the product of the equators in the two $S^2$ factors of $S^2\times S^2$.  For some time it was unknown whether  the Clifford tori were the only monotone Lagrangian tori in these manifolds up to Hamiltonian isotopy; however by now there are a variety of constructions of monotone Lagrangian tori which are not Hamiltonian isotopic to the Clifford tori.  One of the goals of this paper is to clarify the relationship between several of these constructions.

In particular, here are sketches of five ways of constructing a monotone Lagrangian torus in $S^2\times S^2$ (more specific details will be provided later in the paper):
\begin{itemize}\item Begin with a suitably-scaled version, denoted in this paper by $\mathcal{P}_{0,1}^{1/2}$, of the torus in $T^{*}S^2$ considered in \cite{AF}, and consider the image $T_{AF}$ of $\mathcal{P}_{0,1}^{1/2}$ under a symplectic embedding of  $D_{1}^{*}S^2$ into $S^2\times S^2$ where $D_{1}^{*}S^2$ is the unit disk bundle in $T^*S^2$.
\item As in \cite{FOOO}, begin with the fiber over $(1/2,1/2)$ of the toric orbifold whose moment polygon is the triangle with vertices $(0,0)$, $(0,1)$, and $(2,0)$, and then desingularize the orbifold to obtain a manifold symplectomorphic to $S^2\times S^2$ containing a monotone Lagrangian torus $T_{FOOO}$.
\item Let $T_{CS}$ denote the image of the ``twist torus'' $\Theta_1$ in $B^2(\sqrt{2})\times B^2(\sqrt{2})\subset \R^4$ from \cite{CS} under the standard dense symplectic embedding of $B^2(\sqrt{2})\times B^2(\sqrt{2})$ into $S^2\times S^2$.
\item As in \cite[Section 3]{Ga}, Consider the polarization of $S^2\times S^2$ given by the diagonal $\Delta$, and let $T_{BC}$ be the result of the Biran circle bundle construction \cite{Bir} using an equatorial circle in $\Delta$.
\item As in \cite{EPrigid}, where $e_1$ is the first standard basis vector in $\R^3$ let \[ T_{EP} = \left\{(v,w)\in S^2\times S^2\left|\thinspace(v+w)\cdot e_1=0,\,v\cdot w=-1/2\right.\right\} \]
\end{itemize}

We prove:
\begin{theorem}\label{mains2s2} All of the five tori $T_{AF}$, $T_{FOOO}$, $T_{CS}$, $T_{BC}$, and $T_{EP}$ are Hamiltonian isotopic\footnote{Since some of these tori are constructed not in $S^2\times S^2$ but rather in some other manifold $M$ which is symplectomorphic to $S^2\times S^2$, whether a Hamiltonian isotopy exists between, \emph{e.g.}, $T_{FOOO}$ and $T_{EP}$ might seem to depend on the choice of symplectomorphism from $M$ to $S^2\times S^2$.  However since every symplectomorphism of $S^2\times S^2$ can be written as the composition of a Hamiltonian diffeomorphism and the diffeomorphism which switches the two factors of $S^2\times S^2$ \cite[0.3.C]{Gr}, and since $T_{EP}$ is invariant under this latter diffeomorphism, there is in fact no such dependence.} to each other.
\end{theorem}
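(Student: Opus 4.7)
The strategy is to identify all five tori with a single reference Lagrangian $L_0 := \Phi(S(\nu))\subset S^2\times S^2$, where $\Phi\co D_1^*S^2\setminus S^2 \to S^2\times S^2 \setminus \Delta$ is the Biran symplectomorphism attached to the diagonal polarization (note that the normal bundle of $\Delta$ has Euler number $2$, so the Biran disk bundle is naturally a neighborhood of the zero section in $T^*S^2$) and $S(\nu)$ is the unit cosphere bundle over an equator $\nu$ of the zero-section $S^2\subset T^*S^2$. By construction $T_{BC}=L_0$, so $T_{BC}$ serves as the hub.

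The identifications $T_{AF}\simeq L_0$ and $T_{EP}\simeq L_0$ are essentially structural. For $T_{AF}$, the Albers--Frauenfelder torus $\mathcal{P}_{0,1}^{1/2}$ is already defined intrinsically on $T^*S^2$ via the geodesic flow and can be deformed to $S(\nu)$ by a Hamiltonian isotopy supported in the unit disk bundle. For $T_{EP}$, one reads the conditions $(v+w)\cdot e_1=0$ and $v\cdot w=-1/2$ in Biran coordinates: the angular condition $v\cdot w=-1/2$ fixes the momentum magnitude on the $T^*S^2$ side (determined by the angular separation $\arccos(-1/2)$), while $(v+w)\cdot e_1=0$ forces the base point of $\Phi^{-1}(v,w)$ to lie on the equator perpendicular to $e_1$---exactly the description of $L_0$. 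A short Hamiltonian isotopy rotating between equators completes both identifications.

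The remaining identifications $T_{CS}\simeq L_0$ and $T_{FOOO}\simeq L_0$ require more work. For $T_{CS}$, one transports the twist torus $\Theta_1\subset B^2(\sqrt 2)\times B^2(\sqrt 2)$ through the dense product embedding and compares with $L_0$ using an explicit coordinate change (via coisotropic reduction from $\C^3\times\C^3$) that relates the product chart to the disk-bundle neighborhood of the antidiagonal; the antidiagonal in turn is sent to $\Delta$ by a global Hamiltonian diffeomorphism rotating one factor. For $T_{FOOO}$, one performs the toric orbifold resolution of the triangle with vertices $(0,0),(0,1),(2,0)$ and identifies the resulting manifold symplectically with $S^2\times S^2$ in such a way that the monotone fiber at $(1/2,1/2)$ is carried to (a Hamiltonian isotopic copy of) $T_{CS}$, reducing to the previous step.

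The main obstacle is handling $T_{FOOO}$: the orbifold desingularization introduces an exceptional divisor that must be blended with the global symplectic structure of $S^2\times S^2$ through a Moser-type argument, and the resulting symplectomorphism must be tracked carefully enough to locate the monotone fiber inside the Chekanov--Biran picture. Step 3 ($T_{CS}$) is the next most delicate, since the Chekanov twist construction lives in a different dense open chart of $S^2\times S^2$ than the Biran picture does, so bridging them requires either an explicit coordinate change or, alternatively, a uniqueness-type argument invoking the monotone disk class structure to match $T_{CS}$ to $L_0$ indirectly.
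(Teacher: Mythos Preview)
Your hub-and-spoke architecture matches the paper's, though the paper uses $T_{EP}$ as the hub and in fact shows that $T_{AF}=T_{EP}$ and $T_{BC}=T_{EP}$ are literal \emph{equalities}, not just isotopies, via the explicit symplectomorphism $\Phi_2(v,w)=\bigl(\tfrac{v\times w}{|v-w|},\tfrac{v-w}{|v-w|}\bigr)$ from $(S^2\times S^2)\setminus\Delta$ to $D_1^*S^2$ together with an explicit Biran embedding $\Theta_\Delta$ over the diagonal. One slip to fix: the normal bundle of $\Delta$ has Euler number $+2$, so the Biran disk bundle is modelled on $TS^2$ (with a nonstandard symplectic form), not on $T^*S^2$. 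The cotangent picture instead arises as a Darboux--Weinstein neighborhood of the \emph{anti}diagonal, and these are two genuinely different dense charts on $S^2\times S^2$; your description of $\Phi$ and $L_0$ conflates them.

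The more serious gap is your treatment of $T_{FOOO}$. A Moser-type argument identifying the resolved orbifold with $S^2\times S^2$ gives essentially no control over where a specific Lagrangian fiber lands, so the step ``the monotone fiber is carried to (a Hamiltonian isotopic copy of) $T_{CS}$'' has no content as stated---you flag this difficulty yourself but do not resolve it. The paper avoids the problem by running the construction in the other direction: it builds the toric orbifold structure \emph{directly on} $S^2\times S^2$. Concretely, a quaternionic double cover $\varphi_1\co B^4(2)\setminus\{0\}\to D_1^*S^2\setminus 0_{S^2}$, $\xi\mapsto(-\xi^*k\xi/4,\,\xi^*j\xi/|\xi|^2)$, composed with $\Phi_2^{-1}$, exhibits $(S^2\times S^2)\setminus\bar\Delta$ as the smooth locus of a toric orbifold whose moment map is written explicitly in terms of $\tfrac12|v+w|$ and $\tfrac12(v+w)\cdot e_1$. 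The resolution $\hat F_2(0)$ is then obtained by gluing in $D_1^*S^2$ along the already-constructed $\varphi_1$, and the resulting symplectomorphism to $S^2\times S^2$ is explicit enough that the fiber over $(1/2,1/2)$ is \emph{visibly} $T_{EP}$; no Moser argument and no detour through $T_{CS}$ are needed. For $T_{CS}\simeq T_{EP}$ itself, the paper uses Gadbled's observation that the two relevant $S^1$-actions $(v,w)\mapsto(R_tv,R_{\pm t}w)$ on $S^2\times S^2$ are conjugate in $SO(3)\times SO(3)$, plus an area check on the resulting curve---considerably lighter than the coisotropic-reduction route you sketch.
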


The fact that $T_{CS}$ and $T_{BC}$ are Hamiltonian isotopic is proven in \cite{Ga}, though we give a somewhat different argument.  The equivalences of the other tori seem to have been widely expected, and in some cases may be known to a small number of experts, but we have not seen full proofs in the literature.  Theorem \ref{mains2s2} is proven in several parts, as Propositions \ref{FOOOtoEP}, \ref{CStoEP}, and \ref{bcep}.

Many of the tori listed above have analogues which are monotone Lagrangian tori in $\C P^{2}(\sqrt{2})$, namely:
\begin{itemize}\item A suitably-scaled version of the torus in $T^*S^2$ from \cite{AF} descends under the natural quotient map to a torus $\underline{\mathcal{P}}_{0,1}^{1/3}\subset T^{*}\R P^2$, which then includes into $\C P^2(\sqrt{2})$ as a monotone torus $T_{AF}^{P}$ under a Darboux-Weinstein neighborhood of $\R P^2$
\item As in \cite{Wu}, begin with the fiber over $(1/3,1/3)$ in the toric orbifold whose moment polygon has vertices $(0,0)$, $(0,1/2)$, and $(2,0)$, 
and then smooth the singular point to obtain a manifold symplectomorphic to $\C P^2(\sqrt{2})$ containing a monotone Lagrangian torus $T_W$.
\item Let $T_{CS}^{P}$ denote the image under the standard dense symplectic embedding $B^4(\sqrt{2})\hookrightarrow \C P^2(\sqrt{2})$ of the twist torus $\Theta_1$ from \cite{CS}.
\item As in \cite[Section 6.4]{BC09}, consider the polarization of $\C P^{2}(\sqrt{2})$ given by the quadric hypersurface $Q_2(\sqrt{2})$, which is symplectomorphic to $S^2$, and let $T_{BC}^{P}$ be the result of the Biran circle bundle construction using an equatorial circle in $Q_2(\sqrt{2})$.
\end{itemize}

Similarly to Theorem \ref{mains2s2}, we have:
\begin{theorem}\label{maincp2} All of the tori $T_{AF}^{P}$, $T_W$, $T_{CS}^{P}$, and $T_{BC}^{P}$ are Hamiltonian isotopic\footnote{As has been known since \cite{Gr}, the symplectomorphism group of $\C P^2$ is equal to its Hamiltonian diffeomorphism group, so a similar remark applies here as in the previous footnote.} to the following torus in $\C P^2(\sqrt{2})$:\[ L_{0,1}^{P} = \left\{[z_0:z_1:z_2]\in \C P^2(\sqrt{2})\left|\left|\sum z_{j}^{2}\right|=\frac{4\sqrt{2}}{3},\,Im(\bar{z}_1z_2) = 0  \right.\right\}\]
\end{theorem}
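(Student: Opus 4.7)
The plan is to mirror the strategy of Theorem \ref{mains2s2}, decomposing the statement into four Hamiltonian isotopies, each identifying one of the listed tori with $L_{0,1}^{P}$, where $L_{0,1}^{P}$ plays the hub role that $T_{EP}$ plays in the $S^2\times S^2$ case. The unifying geometric thread is the real locus $\R P^2 \subset \C P^2(\sqrt{2})$, which is both the Lagrangian skeleton complementary to the quadric polarization $Q_2(\sqrt{2})$ and the base of the Darboux-Weinstein neighborhood used in building $T_{AF}^{P}$. The explicit formula for $L_{0,1}^{P}$ is designed to be recognizable inside this neighborhood as the image of the descended Albers-Frauenfelder torus $\underline{\mathcal{P}}_{0,1}^{1/3}$.

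First I would show that $T_{AF}^{P}$ is Hamiltonian isotopic (indeed equal, up to reparametrization) to $L_{0,1}^{P}$ by direct computation: the natural Darboux-Weinstein embedding of a disk bundle in $T^{*}\R P^2$ into $\C P^2(\sqrt{2})$ coming from the normal geodesic flow from $\R P^2$ should send $\underline{\mathcal{P}}_{0,1}^{1/3}$ precisely onto the locus cut out by $|\sum z_{j}^{2}| = \tfrac{4\sqrt{2}}{3}$ and $Im(\bar{z}_1 z_2) = 0$. The first condition reflects the fixed cotangent norm (and hence the time of geodesic flow), while the second is a level set of a Hamiltonian generating the $S^1$-action used in the \cite{AF} construction of $\mathcal{P}_{0,1}^{1/2}$. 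Next I would handle $T_{BC}^{P}$ by adapting Gadbled's argument from \cite{Ga}: the Biran circle bundle construction with polarization $Q_2(\sqrt{2})$ produces a disk bundle over $Q_2(\sqrt{2}) \cong S^2$ symplectically embedded in the complement of $\R P^2$, and this disk bundle should match with a punctured disk bundle in $T^{*}\R P^2$ via the geodesic flow in such a way that the Biran circle bundle over the equator of $Q_2(\sqrt{2})$ becomes $\underline{\mathcal{P}}_{0,1}^{1/3}$.

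Third, for $T_{CS}^{P}$, I would prove $T_{CS}^{P}$ is Hamiltonian isotopic to $T_{BC}^{P}$ by an argument parallel to that of \cite{Ga}: the twist torus $\Theta_1 \subset B^4(\sqrt{2})$ can be transported inside $\C P^2(\sqrt{2})$ by a Hamiltonian isotopy supported away from $Q_2(\sqrt{2})$ to a Biran circle bundle torus over the equator. For $T_W$, the Wu construction smooths a toric orbifold to yield $\C P^2(\sqrt{2})$; I would arrange the smoothing so that the disappearing singular point is replaced by a neighborhood of $Q_2(\sqrt{2})$, identifying $T_W$ as a Biran circle bundle torus over an equator of that quadric and reducing to the previous step. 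The main obstacle will be this last case: the toric degeneration involves a nontrivial symplectic surgery, and one must verify that the resulting equatorial circle is the monotone one rather than some non-monotone parallel. This mirrors the $T_{FOOO}$ step in Theorem \ref{mains2s2}, and I expect the resolution to combine an explicit symplectic model for the smoothing with a monotonicity argument.
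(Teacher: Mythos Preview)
Your treatment of $T_{AF}^{P}$ and $T_{BC}^{P}$ matches the paper's approach: the paper constructs an explicit Darboux--Weinstein embedding $\Psi^P\co D_{1}^{*}\R P^2\to \C P^{2}(\sqrt{2})\setminus Q_2(\sqrt{2})$ and an explicit disk-bundle embedding $\Theta_P$ over $Q_2(\sqrt{2})$, then checks (Proposition~\ref{cpncorr}) that the Biran circle bundle over $\mathbb{S}_{0,1}$ literally equals $\Psi^P(\underline{\mathcal{P}}_{0,1}^{1/3})=L_{0,1}^{P}$. For $T_{CS}^{P}$ you propose to invoke Gadbled's result identifying $T_{CS}^{P}$ with $T_{BC}^{P}$; this is valid, but the paper instead gives a self-contained direct argument (Proposition~\ref{CStoL01P}): one observes that $L_{0,1}^{P}$ is the orbit of a curve $C_{0,1}\times\{0\}$ under the $SO(2)$-action $\rho_{0,1}(e^{it})=\left(\begin{smallmatrix}\cos t & \sin t\\ -\sin t & \cos t\end{smallmatrix}\right)$ on $B^4(\sqrt{2})$, conjugates this action inside $U(2)$ to the Chekanov--Schlenk action $\left(\begin{smallmatrix}e^{it} & 0\\ 0 & e^{-it}\end{smallmatrix}\right)$, and then verifies via an explicit area computation (Lemma~\ref{arealemma}) that the transported curve encloses area $\pi/3$, so it is a legitimate choice of $\Gamma_P$.

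Your plan for $T_W$, however, has the geometry inverted. The singular point of the toric orbifold $F_4(0)$ sits over the vertex $(0,1/2)$ of the moment polytope, and smoothing it replaces that point by a Lagrangian $\R P^2$ (a neighborhood of the zero-section in $T^*\R P^2$), \emph{not} by the quadric $Q_2(\sqrt{2})$; the quadric lives at the opposite side of the polytope and is already present in the smooth part of $F_4(0)$. So your proposed route ``identify $T_W$ as a Biran circle bundle torus by arranging the smoothing near $Q_2(\sqrt{2})$'' does not correspond to the actual surgery. What the paper does instead (Proposition~\ref{WtoL01P}) is build a concrete model of $F_4(0)$ by gluing $B^4(2)/\langle i\rangle$ to $\C P^2(\sqrt{2})\setminus\R P^2$ along the symplectomorphism $\Psi^P\circ\underline{\Phi}_1$, so that the smoothing $\hat{F}_4(0)$ becomes, on the nose, $\C P^2(\sqrt{2})$. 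The moment map $(2\underline{F}+\underline{G},\,1/2-\underline{F})$ is then written in the ambient coordinates as $\underline{F}([z])=\tfrac{1}{4}\sqrt{4-|\sum z_j^2|^2}$ and $\underline{G}([z])=Im(\bar{z}_1z_2)$, and plugging in $(1/3,1/3)$ immediately yields the defining equations of $L_{0,1}^{P}$. There is no separate monotonicity check to perform; the equality is exact. This is the step where the ``explicit model'' philosophy really pays off, and it is where your outline needs to be reworked.
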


Similarly to the case of $S^2\times S^2$ the tori $T_{CS}^{P}$ and $T_{BC}^{P}$ were proven to be Hamiltonian isotopic in \cite{Ga}; however the other equivalences do not appear to have been known, and in particular the question of whether $T_{W}$ is Hamiltonian isotopic to $T_{CS}^{P}$ is asked in \cite[Remark 1.3]{Wu}.  Theorem \ref{maincp2} is proven in Section \ref{cp2eq} and Proposition \ref{cpncorr}.

It has been known for some time that the tori in Theorems \ref{mains2s2} and \ref{maincp2} are not Hamiltonian isotopic to the respective Clifford tori (see, \emph{e.g.}, \cite[Theorem 1]{CS}).  Recently, Vianna \cite{Vi} has constructed a  monotone Lagrangian torus in $\C P^2$ which is Hamiltonian isotopic neither to the Clifford torus nor to $T_{CS}^{P}$.

\subsection{Results in higher dimensions}  Some of the tori in Theorems \ref{mains2s2} and \ref{maincp2} have natural generalizations to higher-dimensional monotone Lagrangian submanifolds.  Higher-dimensional  versions of the twist tori $T_{CS}$ are discussed in \cite{CS}, yielding many mutually inequivalent, non-displaceable monotone Lagrangian tori in $(S^2)^n$ and $\C P^n$.  Here we will instead consider certain generalizations of $T_{AF}$ and $T_{BC}$, yielding monotone Lagrangian submanifolds (typically not tori) in $\C P^n(\sqrt{2})$ and in the quadrics $Q_{n}(\sqrt{2})$.

More specifically, in Section \ref{afintro} we introduce, for any pair of natural numbers $k\leq m$ with $m\geq 1$ and any positive real number $r$, a submanifold $\mathcal{P}_{k,m}^{r}\subset T^{*}S^{k+m+1}$ as the union of the cotangent lifts of speed-$r$ geodesics connecting the linked spheres $S^k\times\{\vec{0}\}$ and $\{\vec{0}\}\times S^m$ in $S^{k+m+1}$. The torus $\mathcal{P}_{0,1}^{1}$ is the subject of \cite{AF}, and more generally a remark near the end of \cite{AF} discusses $\mathcal{P}_{0,m}^{1}$.  To obtain a Lagrangian submanifold in a closed manifold there are two natural ways to proceed: one can perform a symplectic cut \cite{L} on $T^*S^{k+m+1}$ to obtain a manifold symplectomorphic to $Q_{k+m+2}(\sqrt{2})$; or one can instead first take the quotient of $T^{*}S^{k+m+1}$ by the antipodal involution, yielding $T^{*}\R P^{k+m+1}$, and then perform a symplectic cut to obtain a manifold symplectomorphic to $\C P^{k+m+1}(\sqrt{2})$.  In either case, we will find a unique value of $r$ for which the image of $\mathcal{P}_{k,m}^{r}$ under the indicated operations on the ambient manifold is monotone, giving rise to monotone Lagrangian submanifolds $L_{k,m}^{Q}\subset Q_{k+m+2}(\sqrt{2})$ and $L_{k,m}^{P}\subset \C P^{k+m+1}(\sqrt{2})$.  (Strictly speaking our presentation will not explicitly use the symplectic cut, though it is not difficult to see that it can be cast in these terms; rather we will directly construct  symplectomorphisms from the open disk bundles $D_{1}^{*} S^{k+m+1}$ and $D_{1}^{*}\R P^{k+m+1}$ to the complements of symplectic hypersurfaces in $Q_{k+m+2}(\sqrt{2})$ and $\C P^{k+m+1}(\sqrt{2})$, respectively.) Up to diffeomorphism one has $L_{k,m}^{Q}\cong \frac{S^1\times S^k\times S^m}{\Z/2\Z}$ and $L_{k,m}^{P}\cong S^1\times\left(\frac{S^k\times S^m}{\Z/2\Z}\right)$ where in each case $\Z/2\Z$ acts simultaneously by the antipodal map on each factor.  In particular $L_{0,m}^{P}$ and $L_{0,m}^{Q}$ are each diffeomorphic to $S^1\times S^m$, while $L_{1,1}^{Q}$ and $L_{1,1}^{P}$ are each three-dimensional tori.

Another way of constructing monotone Lagrangian submanifolds of $Q_{n+1}(\sqrt{2})$ or $\C P^{n}(\sqrt{2})$ proceeds as follows.  Each of these manifolds is a K\"ahler manifold containing the quadric $Q_n(\sqrt{2})$ as an ample divisor, and so a given monotone Lagrangian submanifold $\Lambda\subset Q_{n}(\sqrt{2})$ induces by the Biran circle bundle construction \cite[Section 4.1]{Bir}  monotone Lagrangian submanifolds $\Lambda^Q\subset Q_{n+1}(\sqrt{2})$ and $\Lambda^P\subset \C P^{n}(\sqrt{2})$.  (To recall this construction briefly, one identifies an open dense subset of $Q_{n+1}(\sqrt{2})$ or $\C P^{n}(\sqrt{2})$ with a symplectic disk bundle over $Q_n(\sqrt{2})$, and then sets $\Lambda^Q$ or $\Lambda^P$ equal to the restriction of a circle bundle of appropriate radius in this disk bundle to $\Lambda$.)
A natural choice for the submanifold $\Lambda\subset Q_{n}(\sqrt{2})$ is the following, where $k$ and $m$ are natural numbers with $k\leq m$ and $n=k+m+1$: \[ 
\mathbb{S}_{k,m} = \left\{[ix:y]\in \C P^{n}(\sqrt{2}) | x\in S^k\subset\R^{k+1},\,y\in S^m\subset \R^{m+1}\right\} \]  The following, proven in Propositions \ref{maincorr} and \ref{cpncorr}, is a generalization of the equivalences of $T_{AF}$ with $T_{BC}$ and of $T_{AF}^{P}$ with $T_{BC}^{P}$:

\begin{theorem}\label{gendimcorr}
Whenever $0\leq k\leq m$ and $m\geq 1$ we have \[ L_{k,m}^{Q} = (\mathbb{S}_{k,m})^{Q}\qquad\mbox{and}\qquad L_{k,m}^{P} = (\mathbb{S}_{k,m})^{P}\]
\end{theorem}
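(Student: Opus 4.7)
The plan is to realize both $L_{k,m}^{Q}$ and $(\mathbb{S}_{k,m})^{Q}$ inside a common complex-geometric model of the ambient open set and then compare the two descriptions directly.  Let $N = k+m+1$ and consider the affine quadric $\mathcal{Q}_{N} = \{u \in \C^{N+1} : \sum u_{j}^{2} = 1\}$.  One identifies $\mathcal{Q}_{N}$ symplectically with $T^{*}S^{N}$ via $\psi(x,v) = \cosh(|v|)\,x + i\sinh(|v|)\,v/|v|$, and with $Q_{N+1}(\sqrt{2}) \setminus Q_{N}(\sqrt{2})$ via $u \mapsto [iu : 1]$.  Under these identifications the zero section $S^{N} \subset T^{*}S^{N}$ corresponds to the Biran isotropic skeleton; the normalized geodesic flow corresponds to the $S^{1}$-action rotating Biran fibers; and the Biran projection to $Q_{N}(\sqrt{2}) \subset \C P^{N}$ is given on the punctured cotangent bundle by $(x,v) \mapsto [x + iv/|v|]$.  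The first task is to check that the symplectomorphism $D_{1}^{*}S^{N} \to Q_{N+1}(\sqrt{2}) \setminus Q_{N}(\sqrt{2})$ used to define $L_{k,m}^{Q}$ agrees, up to a self-symplectomorphism of the target preserving the Biran disk-bundle structure, with this affine-quadric model.

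The key computation is then direct.  Writing $\mathcal{P}_{k,m}^{r}$ as the set of $(x,v) = (\cos(rt)p + \sin(rt)q,\; r(-\sin(rt)p + \cos(rt)q))$ for $p \in S^{k}$, $q \in S^{m}$, $t \in \R$, one computes
\[
x + iv/|v| \;=\; (\cos(rt) - i\sin(rt))\,p + (\sin(rt) + i\cos(rt))\,q \;=\; e^{-irt}(p + iq),
\]
where $p + iq$ denotes the vector in $\C^{N+1} = \C^{k+1} \oplus \C^{m+1}$ whose real part is $(p,0)$ and imaginary part is $(0,q)$.  Its projective class is $[p + iq] = [i(-p) : q] \in \mathbb{S}_{k,m}$.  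Thus the Biran projection sends $L_{k,m}^{Q}$ into $\mathbb{S}_{k,m}$, and since $|v| \equiv r$ on $\mathcal{P}_{k,m}^{r}$, the image lies in a single circle bundle over $\mathbb{S}_{k,m}$.  A dimension count together with surjectivity of $S^{k} \times S^{m} \to \mathbb{S}_{k,m}$ upgrades this to equality, so $L_{k,m}^{Q}$ is precisely a Biran circle bundle over $\mathbb{S}_{k,m}$ of radius depending on $r$.

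By construction $(\mathbb{S}_{k,m})^{Q}$ is the unique member of this one-parameter family of circle bundles which is monotone in $Q_{N+1}(\sqrt{2})$; since $L_{k,m}^{Q}$ is also monotone, uniqueness of the monotone radius forces $L_{k,m}^{Q} = (\mathbb{S}_{k,m})^{Q}$.  The $\C P^{N}$ statement follows by descending the entire picture under the free antipodal $\Z/2\Z$-action: $\mathcal{Q}_{N}/\{\pm 1\} \cong \C P^{N}(\sqrt{2}) \setminus Q_{N}(\sqrt{2})$, $T^{*}S^{N}/\{\pm 1\} \cong T^{*}\R P^{N}$, and the image of $\mathcal{P}_{k,m}^{r}$ in $\mathcal{Q}_{N}$ is visibly invariant under $u \mapsto -u$ (which simply swaps $(p,q)$ with $(-p,-q)$), so the same projection formula applies to $\underline{\mathcal{P}}_{k,m}^{r}$ and yields $L_{k,m}^{P} = (\mathbb{S}_{k,m})^{P}$.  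The principal obstacle is the alignment step at the end of the first paragraph: the two a priori unrelated symplectomorphisms into $Q_{N+1}(\sqrt{2}) \setminus Q_{N}(\sqrt{2})$ must be brought into compatibility, which we expect will reduce to checking that both identify the natural $S^{1}$-symmetry (geodesic flow versus fiber rotation) and the natural radial coordinate on the two sides.
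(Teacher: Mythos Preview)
Your approach is essentially the same as the paper's---both push $\mathcal{P}_{k,m}^{r}$ into the quadric and recognize the image as a circle bundle over $\mathbb{S}_{k,m}$---but you have introduced an unnecessary intermediate model (the affine quadric with the $\cosh/\sinh$ identification) and this creates the alignment problem you flag at the end. That alignment is not a triviality: the paper \emph{defines} $L_{k,m}^{Q}$ as $\Psi(\mathcal{P}_{k,m}^{r_0})$ for the specific map $\Psi(p,q)=[\sqrt{1-|p|^2}:p+iq]$, which is visibly different from your composite $(x,v)\mapsto [i(\cosh|v|\,x+i\sinh|v|\,v/|v|):1]$, so without the alignment your computation is about a different Lagrangian submanifold.

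The paper's route eliminates this issue by computing directly with $\Psi$ and $\Theta_Q$. Writing out $\Psi\circ\iota(e^{i\theta},x,y)$ and comparing with the explicit formula for $\Theta_Q$ from Section~\ref{birquad} gives, in one line,
\[
\Psi\circ\iota(e^{i\theta},x,y)=\Theta_Q\bigl([(ix,y),\sqrt{2-2r}\,e^{-i\theta}]\bigr),
\]
which is simultaneously the projection computation you carried out \emph{and} the alignment of models, and moreover yields the exact circle-bundle radius $\sqrt{2-2r}$ (Proposition~\ref{maincorr}). This makes your appeal to uniqueness of the monotone radius superfluous: one gets equality for all $r$, not just the monotone one. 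The $\C P^{n}$ case is handled the same way with $\Psi^P$ and $\Theta_P$ (Proposition~\ref{cpncorr}), rather than by descending an already-unaligned picture through the antipodal quotient. Your core computation $q+ip/|p|=e^{-i\theta}(x,iy)$ is correct and is morally the same identity, so the fix is simply to drop the affine-quadric middleman and run the computation with the maps $\Psi,\Theta_Q,\Psi^P,\Theta_P$ actually used in the definitions.
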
 

In the course of proving Theorem \ref{gendimcorr} we give very explicit formulas for $L^{Q}_{k,m}$ and $L^{P}_{k,m}$   (similar to the formula for $L_{0,1}^{P}$ in Theorem \ref{maincp2}) as subsets of $Q_{k+m+2}(\sqrt{2})$  and $\C P^{k+m+1}(\sqrt{2})$; see Propositions \ref{lqkmprop} and \ref{cpnchar}.

In view of Theorems \ref{mains2s2} and \ref{maincp2} and \cite[Theorem 2]{CS}, $L_{0,1}^{Q}$ and  $L_{0,1}^{P}$ are both nondisplaceable.  Also, we show in Proposition \ref{pnd} that for any $k,m,r$ the submanifold $\mathcal{P}_{k,m}^{r}\subset T^{*}S^{k+m+1}$ is nondisplaceable, generalizing the main result of \cite{AF}; hence so too is its quotient $\underline{\mathcal{P}}_{k,m}^{r}\subset T^{*}\R P^{k+m+1}$.  Since $L^{Q}_{k,m}$ and $L^{P}_{k,m}$ are formed from $\mathcal{P}_{k,m}^{r}$ and  $\underline{\mathcal{P}}_{k,m}^{r}$ by performing symplectic cuts on their respective ambient manifolds, the following is somewhat surprising: 

\begin{theorem}\label{maindisp} $\,\,$\begin{itemize} \item[(i)] If $m \geq 2$ then $L_{0,m}^{Q}\subset Q_{m+2}(\sqrt{2})$ is displaceable.
\item[(ii)] If $k+m\geq 3$ then $L_{k,m}^{P}\subset\C P^{k+m+1}(\sqrt{2})$ is displaceable.\end{itemize}
\end{theorem}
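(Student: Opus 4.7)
The plan is to exploit the Biran circle bundle presentation $L_{k,m}^{Q}=(\mathbb{S}_{k,m})^{Q}$ and $L_{k,m}^{P}=(\mathbb{S}_{k,m})^{P}$ from Theorem~\ref{gendimcorr}, under which each Lagrangian arises as the circle subbundle, at the unique monotone radius, of an explicit symplectic disk bundle $E\to\mathbb{S}_{k,m}\subset Q_{k+m+1}(\sqrt{2})$ whose total space embeds as an open dense subset of the ambient manifold.

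The first thing I would try is a Biran-style lifting principle: a Hamiltonian isotopy $\{\phi_{H}^{t}\}$ on $Q_{k+m+1}(\sqrt{2})$ that displaces $\mathbb{S}_{k,m}$ can be lifted to the ambient manifold by parallel transport along the symplectic connection on $E$, multiplied by a bump function in the fiber radial variable supported in an annulus around the monotone radius, so the lift extends by zero across both the zero section and the divisor at infinity.  The time-one flow of the lift moves the circle fiber of $L_{k,m}$ over a base point $p$ to the corresponding fiber over $\phi_{H}^{1}(p)$, and the disjointness of $\phi_{H}^{1}(\mathbb{S}_{k,m})$ from $\mathbb{S}_{k,m}$ then forces the two circle bundles to be disjoint.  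For part (ii) with $k\ge 1$, $\mathbb{S}_{k,m}\cong (S^{k}\times S^{m})/(\Z/2\Z)$ is not a sphere and one can hope to produce such a displacement directly by rotating a suitable pair of homogeneous coordinates via an element of $SO(k+m+2)$ acting on $Q_{k+m+1}(\sqrt{2})$; the hypothesis $k+m\ge 3$ should provide enough dimensional room to guarantee disjointness.

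For the sphere case $k=0$ covered by part (i), this approach runs into a serious obstacle: $\mathbb{S}_{0,m}\cong S^{m}$ is a monotone Lagrangian sphere in $Q_{m+1}(\sqrt{2})$, so for $m$ even the algebraic self-intersection equals $\pm\chi(S^{m})=\pm 2$, making $\mathbb{S}_{0,m}$ homologically (hence Hamiltonian-) nondisplaceable in the polarization.  Thus one cannot in general displace $L_{0,m}^{Q}$ simply by displacing the base.  Instead I would try to construct a displacing Hamiltonian directly in the ambient manifold, using the observation that $L_{0,m}^{Q}$ is merely a circle bundle over $\mathbb{S}_{0,m}$ whose own self-Euler characteristic $\chi(S^{1}\times S^{m})=0$ presents no such algebraic obstruction.

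The direct construction I would attempt uses the $T^{*}S^{m+1}$-description of $\mathcal{P}_{0,m}^{r}$ as the union of cotangent lifts of geodesics connecting two linked spheres $S^{0}\sqcup S^{m}\subset S^{m+1}$.  For $m\ge 2$, one can perturb the $S^{0}$ factor (the pair of antipodes) by a small isometry of $S^{m+1}$ lying in $SO(m+2)$; this extends to a Hamiltonian diffeomorphism of $Q_{m+2}(\sqrt{2})$ via a corresponding element of $SO(m+3)$, carrying $L_{0,m}^{Q}$ to a Hamiltonian isotopic copy built from a rotated pair of linked spheres, and likewise descending to the $\Z/2\Z$-quotient to handle $L_{0,m}^{P}$.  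The main technical task, and the heart of the proof, is then to verify that these two copies are in fact disjoint; this I would approach by direct computation using the explicit homogeneous-coordinate formulas of Propositions~\ref{lqkmprop} and~\ref{cpnchar}, exploiting the fact that when $m\ge 2$ the $S^{0}$ can be rotated transversally to $S^{m}$ inside $S^{m+1}$, so that the two geodesic configurations avoid each other and the circle-bundle Lagrangians they sweep out (both before and after the symplectic cut compactifying to $Q_{m+2}(\sqrt{2})$ or $\C P^{m+1}(\sqrt{2})$) are disjoint.
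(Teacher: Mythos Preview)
Your proposal has a genuine gap: both displacement mechanisms you suggest are obstructed.

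For the ``lift a base displacement'' approach in part~(ii), the base Lagrangian $\mathbb{S}_{k,m}\subset Q_{k+m+1}(\sqrt{2})$ is typically \emph{not} displaceable.  Already for $(k,m)=(1,1)$, under the standard identification $Q_{3}(\sqrt{2})\cong S^{2}\times S^{2}$ the submanifold $\mathbb{S}_{1,1}$ becomes the Clifford torus (product of equators), which is well known to be nondisplaceable; so there is no displacing Hamiltonian on the base to lift.  Your hope that ``$k+m\ge 3$ provides enough dimensional room'' does not help, since the obstruction is Floer-theoretic rather than dimensional.

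For the ``rotate the $S^{0}$'' approach, the issue is Proposition~\ref{pnd}: every $\mathcal{P}_{k,m}^{r}$ is nondisplaceable in $T^{*}S^{k+m+1}$.  Any element of $SO(m+2)$ acts on $T^{*}S^{m+1}$ by a Hamiltonian diffeomorphism, so $A\cdot\mathcal{P}_{0,m}^{r}$ can never be disjoint from $\mathcal{P}_{0,m}^{r}$.  (If instead you meant to change only the $S^{0}$ in the definition while fixing $S^{m}$, that is not induced by an ambient isometry, and you would have to exhibit it as a Hamiltonian isotopy some other way---but nondisplaceability still forbids disjointness.)  The symplectic cut to $Q_{m+2}(\sqrt{2})$ only collapses a hypersurface at infinity; it cannot remove intersections already present in the interior of $D_{1}^{*}S^{m+1}$.

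The paper's argument is orthogonal to both of yours: rather than moving the base, it moves the \emph{fiber circle}.  Using the explicit formulas of Propositions~\ref{lqkmprop} and~\ref{cpnchar}, $L_{0,m}^{Q}$ (resp.\ $L_{k,m}^{P}$) is written as the image of a map $(e^{i\theta},x,y)\mapsto[\gamma(e^{i\theta}):f(\gamma(e^{i\theta}))x:c(\gamma(e^{i\theta}))y]$ (resp.\ $[\gamma(e^{i\theta})x:\sqrt{2-|\gamma|^{2}}\,y]$) where $\gamma$ is an embedded loop in a two-dimensional domain $D(1)$ (resp.\ $\mathbb{H}(\sqrt{2})$).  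One then checks that an area-preserving isotopy of $\gamma$ in this domain induces an \emph{exact} Lagrangian isotopy of the image, extendable to a Hamiltonian isotopy of the ambient manifold.  The numerical hypotheses $m\ge 2$ and $k+m\ge 3$ are exactly what make the area enclosed by $\gamma$ less than half the total area of the domain, so that $\gamma$ can be isotoped off itself.  Abstractly this is the content of Proposition~\ref{gendisp}: the Bohr--Sommerfeld condition on $\mathbb{S}_{k,m}$ trivializes the circle bundle, so $L$ sits over $\Lambda$ as $\Lambda\times(\text{circle of radius }r)$ inside $\Lambda\times D(\sqrt{\tau/\pi})$, and $r^{2}<\tau/(2\pi)$ lets you slide the circle off itself in the disk.
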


These appear to be the first examples of displaceable \emph{monotone} Lagrangian submanifolds in any simply-connected compact symplectic manifold.  In some non-simply-connected ambient manifolds there are simple examples of displaceable monotone Lagrangians such as a small contractible circle on a $2$-torus; however if instead of requiring monotonicity one requires homological monotonicity (\emph{i.e.}, that the area and Maslov homomorphisms are proportional on $H_2(M,L)$ and not just on $\pi_2(M,L)$---if $M$ is simply-connected this is equivalent to monotonicity) then such trivial examples do not arise and we have not found any other examples in the literature.  Theorem \ref{maindisp} is proven below as Propositions \ref{qdisp} and \ref{highdisp}.

The requirement that $k+m\geq 3$ in Theorem \ref{maindisp} (ii) means that that result does not apply to the monotone Lagrangian submanifolds $L_{0,2}^{P}$ or $L_{1,1}^{P}$ in $\C P^{3}(\sqrt{2})$, which are diffeomorphic to $S^1\times S^2$ and $S^1\times S^1\times S^1$, respectively.  In fact, in Corollary \ref{l11} we prove by a Floer homology computation that $L_{1,1}^{P}$ is nondisplaceable.  It would be interesting to know whether $L_{0,2}^{P}$, or any of the $L_{k,m}^{Q}$ with $k\geq 1$, is displaceable.  In the case of $L_{0,2}^{P}$ we show in Corollary \ref{floervanish} that its Floer homology (with arbitrarily-twisted coefficients) is trivial.

The constructions of \cite{CS} yield four non-Hamiltonian-isotopic, nondisplaceable twist tori in $\C P^3(\sqrt{2})$ (including the Clifford torus).  Based on communication with F. Schlenk  concerning invariants of these tori, together with our own computations for $L_{1,1}^{P}$ in Propositions \ref{onlyclasses} and \ref{fiveclasses}, it appears that the torus $L_{1,1}^{P}$ is not Hamiltonian isotopic to any of the twist tori from \cite{CS}.

 Incidentally, just as in \cite{Wu} where the corresponding result is proven for the torus denoted $T_{W}\subset \C P^{2}(\sqrt{2})$ in Theorem \ref{maincp2}, the Floer homology computation in Corollary \ref{l11} implies that $\R P^3$ is not a stem in $\C P^3$ in the sense of \cite{EPstates} (since $L_{1,1}^{P}$ is disjoint from $\R P^3$).  

 \subsection{Outline of the paper and additional remarks} In Section \ref{sectionS2S2}, we give more detailed descriptions of the tori $T_{AF}$, $T_{FOOO}$, $T_{CS}$, and $T_{EP}$ from Theorem \ref{mains2s2} and establish their equivalences.  In Section \ref{sectionCP2}, we first establish a symplectic identification of the unit disk cotangent bundle $D_1^*\R P^n$ with a dense subset of $\C P^n(\sqrt{2})$, and we then describe the tori $T_{AF}^P$, $T_{CS}^P$, and $T_W$ from Theorem \ref{maincp2} and establish their equivalences.  

In Section \ref{sectionBiran}, we give explicit descriptions of embeddings of symplectic disk bundles \cite{B} in three special cases: the disk bundle over the diagonal $\Delta$ in $S^2\times S^2$; the disk bundle over the quadric $Q_n(\sqrt{2})$ in the quadric $Q_{n+1}(\sqrt{2})$; and the disk bundle over the quadric $Q_n(\sqrt{2})$ in $\C P^n(\sqrt{2})$.  In the case of the disk bundle over $\Delta\subset S^2\times S^2$, we observe that the Biran circle bundle construction \cite{Bir} over an equator in the diagonal is equal to the torus $T_{EP}$ (finishing the proof of Theorem \ref{mains2s2}).  Finally, in Proposition \ref{gendisp} we establish a general criterion, in terms of topological data, for the displaceability of monotone Lagrangian submanifolds obtained by the Biran circle bundle construction.

Section \ref{afintro} defines and discusses in detail the submanifolds $\mathcal{P}_{k,m}^{r}\subset T^*S^{k+m+1}$, which as mentioned earlier generalize the torus $\mathcal{P}_{0,1}^{r}$ considered in \cite{AF}; most of the section is devoted to proving Proposition \ref{pnd}, asserting that these submanifolds are monotone and nondisplaceable.  This nondisplaceability result is not used elsewhere in the paper, but provides an interesting contrast to Theorem \ref{maindisp}.

In Section \ref{quadsubs} we map the submanifolds $\mathcal{P}_{k,m}^{r}\subset T^{*}S^{k+m+1}$ into the quadric $Q_{k+m+2}(\sqrt{2})$ by means of a Darboux--Weinstein neighborhood of the sphere $\mathbb{S}_{0,k+m+1}$, whose image is the complement of the hyperplane section $Q_{k+m+1}(\sqrt{2})$.  We find a unique value of $r$ with the property that the image of $\mathcal{P}_{k,m}^{r}$ is monotone, and show that the resulting monotone Lagrangian submanifold $L_{k,m}^{Q}\subset Q_{k+m+2}(\sqrt{2})$ coincides with the Biran circle bundle construction over $\mathbb{S}_{k,m}\subset Q_{k+m+1}(\sqrt{2})$ using the disk bundle constructed earlier in Section \ref{sectionBiran}.  Moreover, using the very explicit nature of our symplectomorphisms, we give a concrete presentation of $L_{k,m}^{Q}$ as a submanifold of $Q_{k+m+2}(\sqrt{2})$, which allows us to see that $L_{0,m}^{Q}$ is displaceable for $m\geq 2$ by a direct construction.  (Alternatively, this displaceability could be proven using Proposition \ref{gendisp}.)

Section \ref{cpnsubs} carries out a similar procedure for the submanifolds $\underline{\mathcal{P}}_{k,m}^{r}\subset T^{*}\R P^{k+m+1}$ obtained from $\mathcal{P}_{k,m}^{r}\subset T^* S^{k+m+1}$ by quotient projection, mapping them into $\C P^{k+m+1}(\sqrt{2})$ by means of the Darboux--Weinstein neighborhood from Section \ref{sectionCP2}.  Once again this is shown to result in a monotone Lagrangian resulting submanifold $L_{k,m}^{P}$ for a unique value of $r$, and we show that $L_{k,m}^{P}$ coincides with the Biran circle bundle construction over $\mathbb{S}_{k,m}$.  In Proposition \ref{highdisp}, an explicit formula for $L_{k,m}^{P}$ allows us to show  that it is displaceable when $k+m\geq 3$.

Finally, in Section \ref{cp3sect} we consider the cases of the Lagrangian submanifolds $L_{0,2}^{P}$ and $L_{1,1}^{P}$ of $\C P^{3}(\sqrt{2})$, which fall just outside the reach of Proposition \ref{highdisp}.  In fact, by an approach similar to that used in \cite{Au},\cite{CS} to address a torus which by Theorem \ref{maincp2} is equivalent to $L_{0,1}^{P}$, we find that $L_{1,1}^{P}$ has nonvanishing Floer homology for an appropriately twisted coefficient system, and hence is nondisplaceable.  The Floer homology of $L_{0,2}^{P}$, on the other hand, vanishes, and so its displaceability or nondisplaceability remains an interesting open question.
 
We will end this introduction by providing some additional perspective on the submanifolds $L_{k,m}^{P}$ and $L_{k,m}^{Q}$ and on our approach to proving the various equivalences in Theorems \ref{mains2s2}, \ref{maincp2}, and  \ref{gendimcorr}.  In each case, an important ingredient is our construction of very explicit symplectomorphisms from dense neighborhoods of the Lagrangian submanifolds $\overline{\Delta}\subset S^2\times S^2$ (the antidiagonal), $\mathbb{S}_{0,k+m+1}\subset Q_{k+m+2}(\sqrt{2})$, or $\R P^{k+m+1}\subset\C P^{k+m+1}(\sqrt{2})$ to the appropriate disk cotangent bundles, and likewise of explicit symplectomorphisms from dense neighborhoods of $\Delta\subset S^2\times S^2$, $Q_{k+m+1}(\sqrt{2})\subset Q_{k+m+2}(\sqrt{2})$, or $Q_{k+m+1}(\sqrt{2})\subset\C P^{k+m+1}(\sqrt{2})$ to appropriate symplectic disk bundles.  An organizing principle behind our constructions of these symplectomorphisms is that they should respect the natural symmetry group of the pair $(M,\Sigma)$ where $M$ is the ambient manifold and $\Sigma$ is the relevant codimension-two symplectic submanifold: for both $(M,\Sigma)=(Q_{k+m+2}(\sqrt{2}),Q_{k+m+1}(\sqrt{2}))$ and $(M,\Sigma) = (\C P^{k+m+1}(\sqrt{2}),Q_{k+m+1}(\sqrt{2}))$ this symmetry group is the orthogonal group $O(k+m+2)$, and likewise for $(S^2\times S^2,\Delta)$ the symmetry group is $O(3)$, acting diagonally. 

 In each case the relevant orthogonal group acts in Hamiltonian fashion on $M$ (preserving $\Sigma$), on the appropriate disk cotangent bundle, and on the appropriate symplectic disk bundle, and our symplectomorphisms are designed to be equivariant with respect to these group actions.   A notable feature of these Hamiltonian $O(k+m+2)$-actions is that in all  cases the norms of their moment maps generate  Hamiltonian $S^1$-actions (except at the zero locus of the moment map, where the norm of the moment map fails to be differentiable) which commute with the $O(k+m+2)$-action---this is most easily seen when one works in the appropriate disk cotangent bundle, where the norm of the moment map is just the norm of the momentum, which has Hamiltonian flow given by the unit-speed geodesic flow.  Thus in each case we have an $S^1\times O(k+m+2)$-action on an open dense subset of $M$.  The Lagrangian submanifolds $L_{k,m}^{Q}$ and $L_{k,m}^{P}$ can each be characterized as particular orbits of the subgroup $S^1\times SO(k+1)\times SO(m+1)\leq S^1\times O(k+m+2)$ where $SO(k+1)\times SO(m+1)$ acts block-diagonally, and likewise the torus $T_{EP}\subset S^2\times S^2$ can be seen as an orbit of the torus $S^1\times SO(1)\times SO(2)\leq S^1\times SO(3)$; indeed the action of this torus can be seen as a concrete realization of the toric action on $(S^2\times S^2)\setminus\overline{\Delta}$ in \cite{FOOO}.  When one maps $L_{k,m}^{Q}$, $L_{k,m}^{P}$, or $T_{EP}$ to a cotangent bundle by our equivariant symplectomorphism, one finds that the $S^1$ factor of $S^1\times SO(k+1)\times SO(m+1)$ acts by the geodesic flow, while when one maps $L_{k,m}^{Q}$, $L_{k,m}^{P}$, or $T_{EP}$ to the appropriate sympelctic disk bundle one finds that the $S^1$ acts by rotation of the disk fibers, consistently with our submanifolds being obtained by the Biran circle bundle construction.
 
\subsection*{Acknowledgements}  We would like to thank Felix Schlenk for his interest in this work and for useful information about the twist tori, and Weiwei Wu for helpful conversations about $L_{1,1}^{P}$.  The first author was partially supported by an NSF VIGRE grant (DMS-0738586) and the second author by NSF Grant DMS-1105700.

\section{Lagrangian Submanifolds of $S^2\times S^2$}\label{sectionS2S2}

This section will establish the equivalences of all of the tori in Theorem \ref{mains2s2} except for $T_{BC}$.  We begin by recalling descriptions of the monotone tori $T_{AF}$, $T_{FOOO}$, $T_{CS}$, and $T_{EP}$, from \cite{AF}, \cite{FOOO},\cite{CS},\cite{EPrigid}, respectively, with some minor modifications mostly regarding our normalizations.  The simplest description is that given in \cite{EPrigid}, in which $S^2\times S^2$ is viewed as an embedded submanifold of $\R^3\times \R^3$ in the usual way: \[ S^2\times S^2 = \left\{(v,w)\in \R^3\times \R^3\left|\thinspace |v|=|w|=1\right.\right\}, \] and, where $e_1$ is the first standard basis vector in $\R^3$, the torus $T_{EP}$ is described explicitly as \[ T_{EP} = \left\{(v,w)\in S^2\times S^2\left|\thinspace (v+w)\cdot e_1 = 0, v\cdot w = -1/2 \right.\right\}. \]  Consistently with the conventions mentioned at the start of the introduction we will take the symplectic form on $S^2\times S^2$ to be $\Omega_{S^2\times S^2}=\left( \frac{1}{2}\omega_{\mbox{\scriptsize std}}\right)\oplus\left(\frac{1}{2}\omega_{\mbox{\scriptsize std}}\right)$, where $(S^2,\omega_{\mbox{\scriptsize std}})$ has area $4\pi$.  In particular, the sphere $S^2\times \{\mbox{point}\}$ has area $2\pi$ in our conventions.

In \cite{CS}, one begins with a curve $\Gamma$ enclosing an area of $\frac{\pi}{2}$ and contained in the open upper half disk $\mathbb{H}(\sqrt{2})=\{z\in \C\mid Im(z)>0,|z|<\sqrt{2}\}$.  The curve $\Delta_{\Gamma} = \{(z,z)\mid z\in \Gamma\}$ then lies in the diagonal of $B^2(\sqrt{2})\times B^2(\sqrt{2})$, the product of open disks of radius $\sqrt{2}$, and one then considers the torus $\Theta_{CS}$ in $B^2(\sqrt{2})\times B^2(\sqrt{2})$ given as the orbit of $\Delta_\Gamma$ under the circle action \[ e^{it}\cdot (z_1,z_2) = (e^{it}z_1,e^{-it}z_2). \]  More explicitly, we have \[ \Theta_{CS} = \{(e^{it}z,e^{-it}z)\mid z\in \Gamma,t\in[0,2\pi]\}. \]  Finally, one symplectically embeds $B^2(\sqrt{2})\times B^2(\sqrt{2})$ in $S^2\times S^2$ and defines the torus $T_{CS}$ to be the image of $\Theta_{CS}$ under such an embedding.

In \cite{FOOO}, one begins with a symplectic toric orbifold that is denoted $F_2(0)$ and whose moment polytope is \[ \Delta_{FOOO}=\left\{(x,y)\in \R^2\left|\thinspace 0\leq x \leq 2, 0 \leq y \leq 1-\frac{1}{2}x\right.\right\} \] with exactly one singular point sitting over the point $(0,1)\in \Delta_{FOOO}$.  Then, by replacing a neighborhood of the singular point with a neighborhood of the zero-section of the cotangent bundle $T^{*}S^2$, one obtains a manifold denoted $\hat{F}_2(0)$ that is shown to be symplectomorphic to $S^2\times S^2$.  The monotone Lagrangian torus $T_{FOOO}$ is then described as the image of the fiber over the point $(1/2,1/2)\in \Delta_{FOOO}$ under a symplectomorphism $\hat{F}_2(0)\to S^2\times S^2$.

Finally to construct $T_{AF}$, where $\Delta\subset S^2\times S^2$ is the diagonal, one begins with a symplectomorphism $\Phi_2\co \left( S^2\times S^2 \right)\setminus\Delta \to D_{1}^{*}S^2$ where $D_{1}^{*}S^2\subset T^*S^2$ is the open unit disk bundle (an explicit choice of $\Phi_2$ will be given below in Lemma \ref{cotprod}), and then puts \[ T_{AF}=\Phi_{2}^{-1}\left(\left\{(p,q)\in D_{1}^{*}S^2\left|\thinspace |p|=\frac{1}{2},\,(p\times q)\cdot e_1 = 0\right.\right\}\right). \] Here we use the standard Riemannian metric to identify $T^*S^2$  with $TS^2$, which can then be viewed a submanifold of $\R^3\times \R^3$ as follows \[ T^{*}S^2\cong TS^2 = \{ (p,q)\in \R^3\times \R^3\mid q\cdot p = 0,|q|=1 \}. \]  Under this identification, the canonical one form on $T^{*} S^2$ is $\lambda = p_1 dq_1 + p_2 dq_2 + p_3 dq_3$, and we consider $T^{*} S^2$ with symplectic form $d\lambda$.

\begin{prop}\label{FOOOtoEP}
$T_{AF}$ is equal to $T_{EP}$, and there is a symplectomorphism $S^2\times S^2 \to S^2 \times S^2$ taking $T_{FOOO}$ to $T_{EP}$.
\end{prop}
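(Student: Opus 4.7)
The plan for the first assertion $T_{AF}=T_{EP}$ is to invoke the $SO(3)$-equivariance of the symplectomorphism $\Phi_2\co (S^2\times S^2)\setminus\Delta\to D_1^*S^2$ that will be supplied by Lemma \ref{cotprod}, with $SO(3)$ acting diagonally on $S^2\times S^2$ and by the cotangent lift of the standard action on $T^*S^2$. Under our conventions the corresponding moment maps are $\mu_1(v,w)=\tfrac{1}{2}(v+w)$ and $\mu_2(p,q)=q\times p$; equivariance then forces $q\times p=\tfrac{1}{2}(v+w)$. Taking norms (using $|q\times p|=|p|$ since $p\perp q$ and $|q|=1$) gives $|p|^2=\tfrac{1+v\cdot w}{2}$, so $|p|=1/2$ if and only if $v\cdot w=-1/2$; taking the $e_1$-component gives $(p\times q)\cdot e_1=-\tfrac{1}{2}(v+w)\cdot e_1$, so $(p\times q)\cdot e_1=0$ if and only if $(v+w)\cdot e_1=0$. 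Thus $\Phi_2^{-1}$ carries the defining set of $T_{AF}$ onto $T_{EP}$.

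For the second assertion, the plan is to exhibit on the open dense subset $(S^2\times S^2)\setminus\overline{\Delta}$ a Hamiltonian $T^2$-action whose moment polytope is Delzant-equivalent to $\Delta_{FOOO}$, realizing $T_{EP}$ as the fiber over the image of FOOO's monotone point $(1/2,1/2)$. The natural generators are
\[ H_1(v,w)=\tfrac{1}{2}|v+w|,\qquad H_2(v,w)=\tfrac{1}{2}(v_1+w_1). \]
They Poisson-commute because $H_1$ is $SO(3)$-invariant while $H_2$ is a component of $\mu_1$, and $H_1$ generates a genuine $S^1$-action because it pulls back under $\Phi_2^{-1}$ to $|p|$, which generates the $2\pi$-periodic unit-speed geodesic flow on $S^2$ (and extends smoothly across $\Delta$, where it attains its maximum). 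A direct computation identifies the moment image as the triangle with vertices $(0,0),\,(1,1),\,(1,-1)$, having Delzant vertices $(1,\pm 1)$ and a $\Z_2$-orbifold vertex $(0,0)$ corresponding to the excised antidiagonal, with $T_{EP}$ the fiber over $(1/2,0)$.

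To conclude, the $SL(2,\Z)$-affine transformation $(h_1,h_2)\mapsto(h_1+h_2,\,1-h_1)$ sends our triangle onto $\Delta_{FOOO}$, carrying $(0,0)\mapsto(0,1)$ and $(1/2,0)\mapsto(1/2,1/2)$, so in the common polytope $T_{EP}$ corresponds to the fiber over the same point as $T_{FOOO}$. The Lerman--Tolman uniqueness theorem for symplectic toric orbifolds then yields an equivariant symplectomorphism between the orbifold compactifications of our open set and of $F_2(0)\setminus\{\text{singular point}\}$; since both $\hat F_2(0)$ and $S^2\times S^2$ arise from this common compactification via the standard symplectic resolution of an $A_1$-singularity (replacing a $\C^2/\Z_2$-neighborhood of the orbifold point by a neighborhood of the zero section of $T^*S^2$), the equivariant symplectomorphism lifts to a symplectomorphism $\hat F_2(0)\to S^2\times S^2$ carrying $T_{FOOO}$ to $T_{EP}$.

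The principal technical obstacle is verifying that our collapse of $\overline{\Delta}$ genuinely produces the same local orbifold structure near the $\Z_2$-vertex as FOOO's $F_2(0)$, so that the two resolutions really coincide. This reduces to a local comparison in a Darboux--Weinstein neighborhood of the zero section of $T^*S^2$, where in both cases the $T^2$-action linearizes to the standard product of geodesic flow and $e_1$-rotation; once this is checked, the two resolutions are indeed the same and the proposition follows.
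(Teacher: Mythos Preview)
Your outline tracks the paper's proof closely. For the first assertion the paper argues exactly as you do, though it phrases it as a direct computation (Lemma~\ref{cotprod} shows $f_{T^*S^2}\circ\Phi_2(v,w)=\tfrac12|v+w|$ and $g_{T^*S^2}\circ\Phi_2(v,w)=\tfrac12(v+w)\cdot e_1$) rather than invoking $SO(3)$-equivariance of moment maps; the content is identical. (Minor point: with the paper's conventions one actually gets $p\times q=\tfrac12(v+w)$, not $q\times p$, but as you note this is irrelevant since both defining conditions are ``equals zero''.)

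For the second assertion your Hamiltonians $H_1,H_2$ are precisely the paper's functions $F,G$, and your affine change $(h_1,h_2)\mapsto(h_1+h_2,1-h_1)$ is the paper's moment map $(F+G,1-F)$, so the toric picture and the identification of $T_{EP}$ with the fiber over $(1/2,1/2)$ are the same. The difference is entirely in how the gap you flag is handled. You speak of an ``orbifold compactification'' of $(S^2\times S^2)\setminus\overline{\Delta}$ and then of ``lifting'' the Lerman--Tolman symplectomorphism through the ``standard'' $A_1$-resolution, but neither the compactification nor the lift is automatic: Lerman--Tolman applies to compact toric orbifolds, not to open toric manifolds, and the resolution in \cite{FOOO} is performed via an \emph{unspecified} symplectomorphism between a punctured orbifold neighborhood and $D_1^*S^2\setminus 0_{S^2}$, so one cannot simply assert that two such resolutions agree. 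The paper resolves both issues at once by producing an explicit symplectic double cover $\varphi_1\co B^4(2)\setminus\{0\}\to D_1^*S^2\setminus 0_{S^2}$ (Lemma~\ref{cotball}); composing its quotient $\Phi_1$ with $\Phi_2^{-1}$ gives a concrete gluing map that simultaneously (i) manufactures the orbifold $\mathcal{O}$ as an honest symplectic orbifold to which Lerman--Tolman applies, and (ii) serves as the specific resolution datum, so that the resolved manifold is \emph{by construction} $S^2\times S^2$ via $\Phi_2^{-1}$. Your last paragraph correctly identifies that what remains is a local comparison near the zero section, but the paper's point is that this comparison cannot be finessed---it requires writing down a map like $\varphi_1$ and checking its properties.
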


The focus of the proof of this proposition will be on the second statement; the fact that $T_{AF}=T_{EP}$ will be observed along the way.

While $T_{EP}$ is given very explicitly as a submanifold of $S^2\times S^2$, the same cannot be said of $T_{FOOO}$.  Rather, $T_{FOOO}$ is given as a submanifold of a symplectic manifold that is denoted $\hat{F}_2(0)$ in \cite{FOOO}, and this ambient manifold is then proven to be symplectomorphic to $S^2\times S^2$ in a way that makes it hard to extract what the image of $T_{FOOO}$ under the symplectomorphism might be.  Thus, most of our task will consist of giving a construction of the manifold $\hat{F}_2(0)$ which allows it to be symplectically identified with $S^2\times S^2$ in a very explicit way.  In fact, once our construction is finished it will follow almost immediately that $T_{FOOO}$ is mapped to $T_{EP}$ by our symplectomorphism.

\begin{lemma}\label{cotball} Where $B^4(2)$ is the open ball of radius $2$ in the quaternions $\mathcal{H}\cong \C^2\cong \R^4$, where $\R^3$ is identified with the pure imaginary quaternions, and where $0_{S^2}\subset T^{*}S^2$ is the zero-section, the map $\varphi_1\co B^4(2)\setminus\{0\}\to D_{1}^{*}S^2\setminus 0_{S^2}$ defined by \[ \varphi_1(\xi) = \left(-\frac{\xi^*k\xi}{4},\frac{\xi^*j\xi}{|\xi|^2}\right) \] is a symplectic double cover with $\varphi_1(\xi_1)=\varphi_1(\xi_2)$ if and only if $\xi_1=\pm \xi_2$.  Moreover, where $f_{T^*S^2}(p,q) = |p|$ and $g_{T^*S^2}(p,q) = (p\times q)\cdot e_1$, we have \[ 
f_{T^{*}S^2}\circ \varphi_1 (z_1+z_2j) = \frac{1}{4}(|z_1|^2+|z_2|^2)\quad \mbox{and}\quad g_{T^{*}S^2}\circ \varphi_1(z_1+z_2j) = \frac{1}{4}(|z_1|^2-|z_2|^2) \] for $z_1,z_2\in \C$ with $0<|z_1|^2+|z_2|^2<4$.
\end{lemma}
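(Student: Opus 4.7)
The plan is to decompose the lemma into four essentially independent pieces: (1) $\varphi_1(\xi)$ actually lies in $D_{1}^{*}S^2\setminus 0_{S^2}$, (2) the stated formulas for $f_{T^*S^2}\circ\varphi_1$ and $g_{T^*S^2}\circ\varphi_1$, (3) the $\pm\xi$ characterization of the fibers, and (4) the symplectic property $\varphi_1^*(d\lambda)=\omega_{\mathrm{std}}$. Parts (1)--(3) are quick quaternionic manipulations; only (4) is a substantive calculation.

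For (1), since $\alpha^* = -\alpha$ forces $(\xi^*\alpha\xi)^* = -\xi^*\alpha\xi$, both $\xi^*k\xi$ and $\xi^*j\xi$ are pure imaginary, so $p,q\in \Img\mathcal{H}\cong\R^3$. Using $\xi\xi^* = |\xi|^2$ and $kj = -i$ one has
\[ (\xi^*k\xi)(\xi^*j\xi) = -|\xi|^2\,\xi^*i\xi, \]
which is again pure imaginary; combined with the identity $\alpha\cdot\beta = -\operatorname{Re}(\alpha\beta)$ for pure-imaginary quaternions, this gives $p\cdot q = 0$. The multiplicativity of the quaternion norm then yields $|q| = 1$ and $|p| = |\xi|^2/4$, confirming both that $|p|<1\Leftrightarrow |\xi|<2$ and the first of the two coordinate formulas. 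For (2), since the cross product of pure-imaginary quaternions is the imaginary part of their quaternion product, the display above gives $p\times q = \tfrac{1}{4}\xi^*i\xi$; writing $\xi = z_1+z_2j$ and expanding via $j\alpha = \bar\alpha j$, $ji = -k$, $jk = i$ yields
\[ \xi^*i\xi = i(|z_1|^2 - |z_2|^2) + 2i\bar z_1 z_2\cdot j, \]
whose $i$-component $|z_1|^2 - |z_2|^2$ gives the claimed formula for $g_{T^*S^2}\circ\varphi_1$. For (3), any equality $\varphi_1(\xi_1) = \varphi_1(\xi_2)$ forces $|\xi_1| = |\xi_2|$ through (1), and then $u := \xi_2\xi_1^{-1}$ is a unit quaternion whose conjugation action fixes both $k$ and $j$, hence fixes all of $\Img\mathcal{H}$, forcing $u = \pm 1$.

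The main obstacle is (4). My plan is to compute $\varphi_1^*\lambda = -\operatorname{Re}(p\,dq)$ directly in the quaternion formalism, using the noncommutative Leibniz rule on $q = \xi^*j\xi/|\xi|^2$. The terms coming from differentiating $1/|\xi|^2$ drop out under $\operatorname{Re}$ because each acquires a factor $\operatorname{Re}(\xi^*k\xi\cdot\xi^*j\xi\cdot(\cdots)) = \operatorname{Re}(-|\xi|^2\xi^*i\xi\cdot(\cdots))$, and $\xi^*i\xi$ is pure imaginary. What remains should simplify via cyclicity of $\operatorname{Re}$ and $\xi\xi^* = |\xi|^2$ to the standard Liouville one-form $\lambda_{\mathrm{std}}$ on $\C^2\setminus\{0\}$ plus an exact correction, whence $d\varphi_1^*\lambda = d\lambda_{\mathrm{std}} = \omega_{\mathrm{std}}$. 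The principal difficulty is careful bookkeeping in the noncommutative algebra; should it prove cumbersome, an alternative is to exploit the $SO(3)$-equivariance of $\varphi_1$---right multiplication by $Sp(1)$ on $\mathcal{H}$ corresponds to the diagonal rotation action on $T^*S^2\subset\R^3\times\R^3$, under which both symplectic forms are invariant---which reduces the verification of $\varphi_1^*(d\lambda)=\omega_{\mathrm{std}}$ to evaluating both forms on two tangent vectors at the single point $\xi = 1$, a one-line computation.
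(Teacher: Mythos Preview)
Your proposal is essentially correct and follows the same structure as the paper's proof: parts (1)--(3) are handled by the same quaternionic identities, and your primary approach to (4)---computing $\varphi_1^*\lambda$ via the noncommutative Leibniz rule and cyclicity of $\operatorname{Re}$---is just a coordinate-free repackaging of the paper's direct calculation, which obtains $\varphi_1^*\lambda = \tfrac{1}{2}(x_1\,dy_1 - y_1\,dx_1 + x_2\,dy_2 - y_2\,dx_2)$ and then differentiates. Two small points deserve attention.

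First, you do not address surjectivity of $\varphi_1$ onto $D_1^*S^2\setminus 0_{S^2}$; the paper obtains this from the surjectivity of the standard double cover $Sp(1)\to SO(3)$, and without it the phrase ``symplectic double cover'' is not fully justified.

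Second, your alternative equivariance argument for (4) has a gap: right multiplication by $Sp(1)$ acts with $3$-sphere orbits $\{|\xi|=r\}$, so invariance under this action alone reduces the verification only to a transversal ray $\{\xi=r:0<r<2\}$, not to the single point $\xi=1$. To genuinely reduce to one point you must also invoke the scaling equivariance $\varphi_1(t\xi)=(t^2p,q)$, under which both $\varphi_1^*(d\lambda)$ and $\omega_{\mathrm{std}}$ transform by the same factor $t^2$. With that addition the alternative route is valid and arguably cleaner than the coordinate computation; without it the claim as written is incomplete.
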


\begin{proof}
First, writing $\xi=z_1+z_2j$, we observe that
\begin{align*}
g_{T^{*}S^2}\circ \varphi_1 (z_1+z_2j) &= g_{T^{*}S^2}\circ \varphi_1 (\xi) = \left( -\frac{\xi^* k \xi}{4} \times \frac{\xi^* j \xi}{|\xi|^2} \right)\cdot e_1 =\left( \frac{1}{4} \xi^* i \xi\right)\cdot e_1 \\
& = \left(\frac{1}{4} \left( (|z_1|^2 - |z_2|^2)i - Re(2\bar{z}_1 z_2)j + Im(2\bar{z}_1 z_2)k \right) \right)\cdot e_1 \\
&= \frac{1}{4}(|z_1|^2-|z_2|^2)
\end{align*}
and also that \[ f_{T^{*}S^2}\circ \varphi_1 (z_1+z_2j) = f_{T^{*}S^2}\circ \varphi_1 (\xi) = \left| -\frac{\xi^* k \xi}{4}\right| = \frac{1}{4}|\xi|^2 = \frac{1}{4}(|z_1|^2+|z_2|^2), \] which proves the second statement of the lemma and also makes clear that $\varphi_1$ has an appropriate codomain.

We then observe that $\varphi_1(-\xi)=\varphi_1(\xi)$, and we claim also that $\varphi_1(\xi_1)=\varphi_1(\xi_2)$ only if $\xi_1=\pm \xi_2$.  Indeed if $\varphi_1(\xi_1)=\varphi_1(\xi_2)$, then it follows that $|\xi_1|=|\xi_2|$ and also that
\begin{align*}
\left(\frac{\xi_1}{|\xi_1|} \right)^* j \left(\frac{\xi_1}{|\xi_1|} \right) &= \left(\frac{\xi_2}{|\xi_2|} \right)^* j \left(\frac{\xi_2}{|\xi_2|} \right), \\
\left(\frac{\xi_1}{|\xi_1|} \right)^* k \left(\frac{\xi_1}{|\xi_1|} \right) &= \left(\frac{\xi_2}{|\xi_2|} \right)^* k \left(\frac{\xi_2}{|\xi_2|} \right), \\
\left(\frac{\xi_1}{|\xi_1|} \right)^* i \left(\frac{\xi_1}{|\xi_1|} \right) &= \left(\frac{\xi_2}{|\xi_2|} \right)^* i \left(\frac{\xi_2}{|\xi_2|} \right).
\end{align*}
Then, writing $\mathcal{S}$ for the group of unit quaternions, it is well known that the map $\mathcal{S} \to SO(3)$ given by $\xi \mapsto \begin{pmatrix} (\xi^* i \xi) & (\xi^* j \xi) & (\xi^* k \xi) \end{pmatrix}$ is a surjective Lie group homomorphism with kernel $\{\pm 1\}$; thus, it must be the case that $\xi_1/|\xi_1| = \pm \xi_2/|\xi_2|$ and hence that $\xi_1=\pm \xi_2$.  Moreover, the surjectivity of this Lie group homomorphism, when paired with the observation that $\left| -\frac{\xi^* k \xi}{4}\right| = \frac{1}{4}|\xi|^2$, implies that $\varphi_1$ is surjective.  A routine computation shows that \[ \varphi_1^* \lambda  = -\frac{y_1}{2}dx_1 + \frac{x_1}{2}dy_1 - \frac{y_2}{2}dx_2 + \frac{x_2}{2}dy_2, \] from which it follows that \[ \varphi_1^*(d\lambda) = d\left(\varphi_1^* \lambda\right) = d\left(-\frac{y_1}{2}dx_1 + \frac{x_1}{2}dy_1 - \frac{y_2}{2}dx_2 + \frac{x_2}{2}dy_2\right) = dx_1\wedge dy_1 + dx_2\wedge dy_2. \] Then, since any symplectic map is an immersion, it follows that $\varphi_1$ is a symplectic double cover as claimed.
\end{proof}

\begin{lemma}\label{cotprod}  Where $\Delta\subset S^2\times S^2$ is the diagonal, the map $\Phi_2\co (S^2\times S^2)\setminus\Delta\to D_{1}^{*}S^2$ defined by \[ \Phi_2(v,w) = \left(\frac{v\times w}{|v-w|},\frac{v-w}{|v-w|} \right)\] is a symplectomorphism.  Moreover, where $f_{T^*S^2}$ and $g_{T^*S^2}$ are as in Lemma \ref{cotball}, we have \[ f_{T^{*}S^2}\circ \Phi_2 (v,w) = \frac{1}{2}|v+w|\qquad \mbox{and}\qquad g_{T^{*}S^2}\circ \Phi_2(v,w) = \frac{1}{2}(v+w)\cdot e_1. \]
\end{lemma}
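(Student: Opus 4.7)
The plan is to establish the lemma in four steps: (a) the image of $\Phi_2$ lies in $D_1^*S^2$; (b) the explicit formulas for $f_{T^*S^2}\circ\Phi_2$ and $g_{T^*S^2}\circ\Phi_2$ hold; (c) $\Phi_2$ is a diffeomorphism, via an explicit inverse; and (d) $\Phi_2$ is symplectic, by lifting through the double cover $\varphi_1$ of Lemma \ref{cotball}.

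For steps (a) and (b), the conditions $|q|=1$ and $p\cdot q = (v\times w)\cdot(v-w)/|v-w|^2 = 0$ are immediate. A short calculation gives $|p|^2 = |v\times w|^2/|v-w|^2 = (1-(v\cdot w)^2)/(2-2v\cdot w) = (1+v\cdot w)/2$, which is less than $1$ when $v\neq w$ and, via the parallelogram identity $|v+w|^2+|v-w|^2=4$, equals $|v+w|^2/4$, yielding the first formula. For the second formula, the BAC-CAB identity gives $(v\times w)\times(v-w) = (v\cdot(v-w))w - (w\cdot(v-w))v = (1-v\cdot w)(v+w)$, so $p\times q = (v+w)/2$ and hence $(p\times q)\cdot e_1 = (v+w)\cdot e_1/2$.

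For step (c), the relations from step (b) force $v+w = 2(p\times q)$ and $v-w = 2\sqrt{1-|p|^2}\,q$, whence the map $(p,q)\mapsto(p\times q + \sqrt{1-|p|^2}\,q,\,p\times q - \sqrt{1-|p|^2}\,q)$ is a smooth two-sided inverse to $\Phi_2$ with image in $(S^2\times S^2)\setminus\Delta$; one checks $|v|=|w|=1$ using $(p\times q)\cdot q = 0$ and $|p\times q|=|p|$ (since $p\perp q$ and $|q|=1$).

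For step (d), set $\psi = \Phi_2^{-1}\circ\varphi_1\co B^4(2)\setminus\{0\}\to(S^2\times S^2)\setminus(\Delta\cup\overline{\Delta})$; this is a $2$-to-$1$ local diffeomorphism onto a dense open subset (note that $\overline{\Delta}$ has codimension two), with explicit formula obtained by composing the inverse from step (c) with Lemma \ref{cotball}, involving $\xi^*i\xi$, $\xi^*j\xi$, and $\sqrt{16-|\xi|^4}$. A direct computation shows $\psi^*\Omega_{S^2\times S^2} = dx_1\wedge dy_1 + dx_2\wedge dy_2$; combining this with $\varphi_1^*(d\lambda) = dx_1\wedge dy_1 + dx_2\wedge dy_2$ from Lemma \ref{cotball} and the factorization $\varphi_1 = \Phi_2\circ\psi$ forces $\Phi_2^*(d\lambda) = \Omega_{S^2\times S^2}$ on $(S^2\times S^2)\setminus(\Delta\cup\overline{\Delta})$, and hence by continuity on all of $(S^2\times S^2)\setminus\Delta$. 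The main obstacle is the symplectic verification for $\psi$: it is conceptually routine but computationally somewhat delicate because of the square-root term. An alternative route, of comparable difficulty, is to compute $\Phi_2^*\lambda$ directly, observing that the identity $(v\times w)\cdot(v-w)=0$ simplifies it to $(v\times w)\cdot d(v-w)/|v-w|^2$, and then taking the exterior derivative in ambient $\R^3$-coordinates.
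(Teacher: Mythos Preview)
Your steps (a)--(c) align with the paper's argument: the paper derives the same pullback formulas via the same vector identities and gives the explicit inverse in the equivalent form $(\sqrt{1-|p|^2}\,q - q\times p,\ -\sqrt{1-|p|^2}\,q - q\times p)$ (recall $p\times q = -q\times p$).

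For step (d) your route is genuinely different. You factor through $\varphi_1$ from Lemma~\ref{cotball}: with $\psi = \Phi_2^{-1}\circ\varphi_1$, you propose verifying $\psi^*\Omega_{S^2\times S^2} = \omega_{\mathrm{std}}$ and combining this with $\varphi_1^*(d\lambda)=\omega_{\mathrm{std}}$ to conclude. This is logically sound, but the deferred computation (involving $\xi^*i\xi$, $\xi^*j\xi$, and the factor $\sqrt{16-|\xi|^4}$) is not obviously lighter than a direct check. The paper instead works entirely on $S^2\times S^2$: it writes down four explicit vector fields $X_1,\dots,X_4$ built from $v\times w$, $v\times(v\times w)$, etc., which span the tangent space off $\overline\Delta$; evaluates $\Phi_2^*\lambda$ on each (only $\Phi_2^*\lambda(X_1)=\frac{1}{2}|v+w|^2$ is nonzero); computes all six commutators $[X_i,X_j]$; and applies the invariant formula $d\alpha(X,Y)=X(\alpha(Y))-Y(\alpha(X))-\alpha([X,Y])$ to match $\Phi_2^*d\lambda$ with $\Omega_{S^2\times S^2}$ pairwise. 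Your approach has the conceptual appeal of recycling Lemma~\ref{cotball} rather than starting from scratch; the paper's approach avoids the square-root term entirely and stays in cross-product language on $S^2\times S^2$, at the price of a longer but mechanical list of commutator identities.
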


\begin{proof}
First, we observe that 
\begin{align*}
g_{T^{*}S^2}\circ \Phi_2(v,w) &= \left(\frac{v\times w}{|v-w|}\times \frac{v-w}{|v-w|} \right) \cdot e_1 = \left( \frac{(v\times w)\times v - (v\times w)\times w}{|v-w|^2} \right)\cdot e_1 \\
&= \left( \frac{-(v\cdot w) v  + w + v - (v\cdot w) w}{|v-w|^2} \right)\cdot e_1 \\
&= \left( \frac{(v+w)(1-v\cdot w)}{2-2v\cdot w} \right) \cdot e_1 = \frac{1}{2}(v+w)\cdot e_1,
\end{align*}
and the relationship
\begin{equation}\label{normseqn}
|v-w|^2|v+w|^2 = 4|v\times w|^2 \mbox{ for }(v,w)\in S^2\times S^2 
\end{equation}
makes clear that $f_{T^{*}S^2}\circ \Phi_2 (v,w) = \frac{1}{2}|v+w|$.  Thus, we have proved the second statement of the lemma (which also makes clear that $\Phi_2$ has an appropriate codomain).

To see that $\Phi_2$ is a symplectomorphism, we observe that the vector fields
\begin{align*}
X_1(v,w) &= (v\times w, w\times v) &X_2(v,w)= (v\times (v\times w), w\times (w\times v))  \\
X_3(v,w) &= (w\times v, w\times v) &X_4(v,w)= (v\times (w\times v), w\times (w\times v))  
\end{align*}
give a basis for $T_{(v,w)}\left((S^2\times S^2)\setminus \Delta\right)$ at each point $(v,w)$ not in the anti-diagonal $\overline{\Delta}=\{(v,w)\in S^2\times S^2\mid v=-w\}$.  We then compute that $\Omega_{S^2\times S^2}$ evaluates on pairs as follows:
\begin{align*}
\Omega_{S^2\times S^2}(X_1,X_2)&=\Omega_{S^2\times S^2}(X_3,X_4) = |v\times w|^2,\\
\Omega_{S^2\times S^2}(X_1,X_3)&=\Omega_{S^2\times S^2}(X_1,X_4)=\Omega_{S^2\times S^2}(X_2,X_3)=\Omega_{S^2\times S^2}(X_2,X_4)=0.
\end{align*}
Then, using the coordinate free formula for the exterior derivative of a one form, we will verify that $\Phi_2^* d\lambda$ evaluates on pairs in an identical manner to $\Omega_{S^2\times S^2}$.  To that end, computing the commutators of the vector fields $X_1$, $X_2$, $X_3$, and $X_4$, one finds the following relationships:
\begin{align*}
[X_1,X_2]&= \frac{1}{2}|v-w|^2 X_1 = -[X_3,X_4], \\
[X_1,X_3]&= -2 X_4, \\
[X_1,X_4]&= \frac{1}{2}|v+w|^2 X_3 = [X_2,X_3],\\
[X_2,X_4]&=(2v\cdot w) X_4.
\end{align*}
Moreover, another computation shows that $\Phi_2^*\lambda(X_1)=\frac{1}{2}|v+w|^2$ while $\Phi_2^* \lambda(X_j) = 0$ for $j\not=1$ (note that \eqref{normseqn} was used here to obtain the simplified form given for $\Phi_2^*\lambda (X_1)$), and yet another computation reveals that $X_j\left(\Phi_2^*\lambda(X_1)\right)=0$ for $j\not=2$ (since the quantity $\Phi_2^*\lambda(X_1)=\frac{1}{2}|v+w|^2$ is preserved under the flows of $X_1$, $X_3$, and $X_4$) while $X_2\left(\Phi_2^*\lambda(X_1)\right)=-2|v\times w|^2$.  It then follows from \eqref{normseqn} that
\begin{align*}
d \Phi_2^* \lambda (X_1,X_2) &= X_1\left(\Phi_2^*\lambda (X_2)\right) - X_2\left(\Phi_2^*\lambda (X_1)\right) - \Phi_2^* \lambda \left( [X_1,X_2] \right) \\ 
&= 2|v\times w|^2 - \Phi_2^* \lambda \left( \frac{1}{2}|v-w|^2 X_1 \right) \\
&= 2|v\times w|^2 - \frac{1}{4}|v-w|^2|v+w|^2 = |v\times w|^2,\\
d \Phi_2^* \lambda (X_1,X_3) &= X_1\left(\Phi_2^*\lambda (X_3)\right) - X_3\left(\Phi_2^*\lambda (X_1)\right) - \Phi_2^* \lambda \left( [X_1,X_3] \right) \\
&= - \Phi_2^* \lambda \left( -2 X_4 \right) = 0, \\
d \Phi_2^* \lambda (X_1,X_4) &= X_1\left(\Phi_2^*\lambda (X_4)\right) - X_4\left(\Phi_2^*\lambda (X_1)\right) - \Phi_2^* \lambda \left( [X_1,X_4] \right) \\
&= - \Phi_2^* \lambda \left( \frac{1}{2}|v+w|^2 X_3 \right) = 0, \\
d \Phi_2^* \lambda (X_2,X_3) &= X_2\left(\Phi_2^*\lambda (X_3)\right) - X_3\left(\Phi_2^*\lambda (X_2)\right) - \Phi_2^* \lambda \left( [X_2,X_3] \right) \\
&= - \Phi_2^* \lambda \left( \frac{1}{2}|v+w|^2 X_3 \right) = 0, \\
d \Phi_2^* \lambda (X_2,X_4) &= X_2\left(\Phi_2^*\lambda (X_4)\right) - X_4\left(\Phi_2^*\lambda (X_2)\right) - \Phi_2^* \lambda \left( [X_2,X_4] \right) \\
&= - \Phi_2^* \lambda \left( (2v\cdot w) X_4 \right) = 0, \\
d \Phi_2^* \lambda (X_3,X_4) &= X_3\left(\Phi_2^*\lambda (X_4)\right) - X_4\left(\Phi_2^*\lambda (X_3)\right) - \Phi_2^* \lambda \left( [X_3,X_4] \right) \\
&= -\Phi_2^* \lambda \left(-\frac{1}{2}|v-w|^2 X_1\right) = \frac{1}{4}|v-w|^2|v+w|^2=|v\times w|^2,
\end{align*}
and then (by continuity along the anti-diagonal $\overline{\Delta}$ where the vector fields $X_j$ vanish) we see that $\Phi_2^* d\lambda = \Omega_{S^2\times S^2}$ as required.  Finally, to see that $\Phi_2$ is bijective, a routine check (using \eqref{normseqn} and the fact that $4-|v+w|^2=|v-w|^2$ for $(v,w)\in S^2\times S^2$) reveals that \[ \Phi_2^{-1}(p,q)=\left(\sqrt{1-|p|^2}\thinspace q-q\times p,-\sqrt{1-|p|^2}\thinspace q-q\times p\right). \]
\end{proof}

With Lemmas \ref{cotball} and \ref{cotprod} proved, we are now ready to give a construction of the manifold $\hat{F}_2(0)$ and prove Proposition \ref{FOOOtoEP}.

\begin{proof}[Proof of Proposition \ref{FOOOtoEP}]

First, the fact that $T_{AF}=T_{EP}$ follows immediately from the definitions and from the computations of $f_{T^*S^2}\circ\Phi_2$ and $g_{T^*S^2}\circ \Phi_2$ in Lemma \ref{cotprod}, since for $(v,w)\in S^2\times S^2$ we have $|v+w| = \sqrt{2+2v\cdot w}$.

Since the preimage of the zero-section $0_{S^2}$ under the map $\Phi_2$ is the anti-diagonal $\overline{\Delta}\subset S^2\times S^2$, it follows from Lemmas \ref{cotball} and \ref{cotprod} that the map $\Phi_{2}^{-1}\circ \varphi_1\co B^{4}(2)\setminus\{0\}\to (S^2\times S^2)\setminus(\overline{\Delta}\cup\Delta)$ descends to a symplectomorphism \[ A\co \frac{B^{4}(2)\setminus\{0\}}{\pm 1}\to (S^2\times S^2)\setminus(\overline{\Delta}\cup\Delta) \] which pulls back the function $(v,w)\mapsto \frac{1}{2}|v+w|$ to the function $[(z_1,z_2)]\mapsto 
\frac{1}{4}(|z_1|^2+|z_2|^2)$ and pulls back the function $(v,w)\mapsto \frac{1}{2}(v+w)\cdot e_1$ to the function $[(z_1,z_2)]\mapsto 
\frac{1}{4}(|z_1|^2-|z_2|^2)$.

Consequently we may introduce the symplectic $4$-orbifold \[ \mathcal{O} = \frac{(B^4(2)/\pm 1)\coprod \left( (S^2\times S^2)\setminus \overline{\Delta}\right)}{[(z_1,z_2)]\sim A([(z_1,z_2)])\mbox{ for }(z_1,z_2)\neq (0,0)} \]  since the fact that $A$ is a symplectomorphism shows that the the symplectic forms on $B^4(2)$ and on $(S^2\times S^2)\setminus \overline{\Delta}$ coincide on their overlap in $\mathcal{O}$.
Moreover we have well-defined functions $F\co \mathcal{O}\to\R$ and $G\co \mathcal{O}\to\R$ defined by \begin{align*} F([(z_1,z_2)])&=\frac{1}{4}(|z_1|^2+|z_2|^2) & G([(z_1,z_2)]) = \frac{1}{4}(|z_1|^2-|z_2|^2)  \\
F(v,w) &= \frac{1}{2}|v+w| & G(v,w) = \frac{1}{2}(v+w)\cdot e_1  \end{align*}
for $(z_1,z_2)\in B^4(2)$ and $(v,w)\in (S^2\times S^2)\setminus\overline{\Delta}$

One easily verifies that the map $(F+G,1-F)\co \mathcal{O}\to \R^2$ is a moment map for a symplectic toric action on the symplectic orbifold $\mathcal{O}$, with image equal to the polytope $\Delta_{FOOO}$.  The classification of toric orbifolds from \cite{LT} therefore implies that $\mathcal{O}$ is equivariantly symplectomorphic to the orbifold $F_{2}(0)$ from \cite[Section 3]{FOOO} (as $\mathcal{O}$ and $F_2(0)$ have identical moment polytopes and both have only one singular point, located at the preimage of $(0,1)$ under the moment map); accordingly we hereinafter implicitly identify $F_2(0)$ with $\mathcal{O}$.  The manifold $\hat{F}_{2}(0)$ from \cite{FOOO} is then constructed by removing a neighborhood $\mathcal{U}$ of the unique singular point $[(0,0)]$ of $\mathcal{O}$ and gluing in its place a neighborhood $\mathcal{N}$ of $0_{S^2}$ in the cotangent bundle $T^*S^2$, using a symplectomorphism between $\mathcal{U}\setminus\{[(0,0)]\}$ and $\mathcal{N}\setminus 0_{S^2}$.  While a particular choice of this symplectomorphism is not specified in \cite{FOOO}, we have already constructed one that will serve the purpose, namely the map $\Phi_1\co (B^4(2)\setminus\{0\})/\pm 1\to D_{1}^{*}S^2\setminus 0_{S^2}$ induced on the quotient by the map $\varphi_1$ from Lemma \ref{cotball}.  This gives a symplectomorphism between the manifold $\hat{F}_{2}(0)$ from \cite{FOOO} and the manifold \[ \frac{D_{1}^{*}S^2\coprod \left( (S^2\times S^2)\setminus \overline{\Delta}\right)}{(p,q)\sim \Phi_{2}^{-1}(p,q)\mbox{ for }(p,q)\in D_{1}^{*}S^2\setminus 0_{S^2}}. \]  But of course the map $\Phi_{2}^{-1}$ then induces a symplectomorphism between this latter manifold and $S^2\times S^2$.

There is an obvious continuous map $\Pi\co \hat{F}_{2}(0)\to F_{2}(0)$ which maps the zero-section $0_{S^2}$ to the singular point $[(0,0)]$ and coincides with $\Phi_{1}^{-1}$ on $D_{1}^{*}S^2\setminus 0_{S^2}\subset \hat{F}_{2}(0)$ and with the identity on $(S^2\times S^2)\setminus \overline{\Delta}\subset \hat{F}_{2}(0)$; the monotone Lagrangian torus $T_{FOOO}$ is the preimage of the point $(1/2,1/2)$ under the pulled-back moment map $((F+G)\circ \Pi,(1-F)\circ\Pi)\co \hat{F}_2(0)\to \R^2$.  In view of the expressions for the functions $F,G$ on $(S^2\times S^2)\setminus \overline{\Delta}$, it follows that $T_{FOOO}$ is taken by our symplectomorphism $\hat{F}_2(0)\to S^2\times S^2$ to \[ \left\{(v,w)\in S^2\times S^2\left|\thinspace \frac{1}{2}|v+w|+\frac{1}{2}(v+w)\cdot e_1 = \frac{1}{2},\thinspace 1-\frac{1}{2}|v+w|=\frac{1}{2} \right.\right\},\]
which is obviously equal to the Entov-Polterovich torus \[ T_{EP}=\{(v,w)\in S^2\times S^2 \mid (v+w)\cdot e_1=0,\, v\cdot w = -1/2\}. \]
\end{proof}

\begin{prop}\label{CStoEP}
There is a symplectomorphism $S^2\times S^2 \to S^2 \times S^2$ taking $T_{CS}$ to $T_{EP}$.
\end{prop}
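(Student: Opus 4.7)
The strategy is to reuse the symplectomorphism $A=\Phi_2^{-1}\circ\varphi_1$ from the proof of Proposition~\ref{FOOOtoEP}, which identifies $(B^4(2)\setminus\{0\})/\pm 1$ with $(S^2\times S^2)\setminus(\overline{\Delta}\cup\Delta)$. A direct computation (using that the Chekanov--Schlenk action $(z_1,z_2)\mapsto(e^{it}z_1,e^{-it}z_2)$ corresponds to right-multiplication of $\xi=z_1+z_2j$ by $e^{it}$, and that conjugation by $e^{it}$ fixes $i$ while rotating $j,k$ by angle $-2t$) shows that $A$ conjugates this action to the diagonal rotation $(v,w)\mapsto(R_tv,R_tw)$ about the $e_1$-axis on $S^2\times S^2$. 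Substituting the defining conditions of $T_{EP}$ (equivalently $|v+w|=1$ and $(v+w)\cdot e_1=0$) into the pullback formulas for $F$ and $G$ from Proposition~\ref{FOOOtoEP} then shows that $A^{-1}(T_{EP})$ is the image in $B^4(2)/\pm 1$ of the standard Clifford torus $\{|z_1|=|z_2|=1\}\subset\C^2$.

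Next I would observe that $\Theta_{CS}=\{(e^{it}z,e^{-it}z):z\in\Gamma,\,t\in[0,2\pi]\}$ is invariant under $(z_1,z_2)\mapsto(-z_1,-z_2)$ (shift $t$ by $\pi$), so $\Theta_{CS}$ descends to a well-defined torus in $B^4(2)/\pm 1$; I would take $T_{CS}$ to be the image of this descended torus under $A$. This amounts to using the composition $B^2(\sqrt{2})\times B^2(\sqrt{2})\hookrightarrow B^4(2)\to B^4(2)/\pm 1\xrightarrow{A}S^2\times S^2$ as the symplectic embedding in the definition of $T_{CS}$, and any two valid choices of embedding yield Hamiltonian-isotopic tori. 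Both $A^{-1}(T_{CS})$ and $A^{-1}(T_{EP})$ are then $S^1$-invariant tori sitting in the zero level of the Chekanov--Schlenk moment map $\mu(z_1,z_2)=\tfrac{1}{2}(|z_1|^2-|z_2|^2)$, and the proof reduces to a Moser-type argument in the symplectic reduction at level zero.

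Using the invariant $w=z_1z_2$, the reduced space $\mu^{-1}(0)/S^1$ is identified with the open disk $\{|w|<2\}\subset\C$ equipped with the reduced form $\tfrac{1}{2}dR\wedge d\sigma$ in polar coordinates $w=Re^{i\sigma}$. Under this identification, the Clifford torus projects to the unit circle, which bounds reduced area $\pi$, and $\Theta_{CS}$ projects to $\Gamma^2=\{z^2:z\in\Gamma\}$, which by the change of variables $w=z^2$ (with Jacobian determinant $4|z|^2$, canceling the factor $\tfrac{1}{2R}=\tfrac{1}{2|z|^2}$ in the reduced form) bounds reduced area $2\cdot\tfrac{\pi}{2}=\pi$ as well. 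Since any two simple closed curves bounding equal area in a disk are interchanged by a compactly supported Hamiltonian isotopy of the disk, such an isotopy exists on the reduced space.

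Finally I would lift this isotopy $S^1$-equivariantly to a neighborhood of $\mu^{-1}(0)$ in $B^4(2)$, cut off to preserve compact support, and push forward through the $\pm 1$-quotient (the lift is automatically $\pm 1$-equivariant since the Chekanov--Schlenk action commutes with $\pm 1$) and then through $A$; extending by the identity across $\overline{\Delta}\cup\Delta$ gives the desired symplectomorphism of $S^2\times S^2$ sending $T_{CS}$ to $T_{EP}$. The principal obstacle is the reduced-space area computation together with ensuring that the Moser lift respects both the $S^1$ and $\pm 1$ symmetries so that it descends cleanly to the quotient; a secondary point is the verification that the Hamiltonian-isotopy class of $T_{CS}$ is independent of the choice of initial symplectic embedding of $B^2(\sqrt{2})\times B^2(\sqrt{2})$.
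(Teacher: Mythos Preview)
Your reduced-space computation is correct and elegant, but the proposal has a genuine gap at exactly the point you flag as ``secondary.'' The composition $B^2(\sqrt{2})\times B^2(\sqrt{2})\hookrightarrow B^4(2)\to B^4(2)/\pm 1\xrightarrow{A}S^2\times S^2$ is \emph{not} a symplectic embedding: it is two-to-one on $\Theta_{CS}$ itself, since $(e^{it}z,e^{-it}z)$ and $(e^{i(t+\pi)}z,e^{-i(t+\pi)}z)$ have the same image. Hence it cannot serve as ``the symplectic embedding'' in the definition of $T_{CS}$. What your argument actually produces is a Hamiltonian isotopy between $T_{EP}$ and the torus $A([\Theta_{CS}])$; identifying this latter torus with $T_{CS}=(\psi\times\psi)(\Theta_{CS})$ is not a preliminary but essentially the content of the proposition.

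To see that this is a real obstruction rather than a formality, apply both maps to the standard Clifford torus $\{|z_1|=|z_2|=1\}$: under $\psi\times\psi$ it goes to the product of equators in $S^2\times S^2$, while under $A$ (as you correctly compute) its quotient goes to $T_{EP}$. These two tori are \emph{not} Hamiltonian isotopic. Thus the two maps send the same model torus to inequivalent Lagrangians in $S^2\times S^2$, and there is no reason to expect them to agree on $\Theta_{CS}$ without a separate argument. The paper closes this gap by a different route: it works directly with the embedding $\psi\times\psi$, observes (following Gadbled) that the circle actions $\rho_{CS}\co(v,w)\mapsto(R_tv,R_{-t}w)$ and $\rho_{EP}\co(v,w)\mapsto(R_tv,R_tw)$ are conjugate by the explicit element $(I,\mathcal{Q}_2)\in SO(3)\times SO(3)$ acting as a Hamiltonian diffeomorphism of $S^2\times S^2$, and then checks by a direct area computation that the generating curve of $(I,\mathcal{Q}_2)(T_{EP})$ encloses the correct area $\pi/2$.
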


\begin{proof}
We recall that $T_{CS}$ is defined as $\psi\times \psi(\Theta_{CS})$, where 
\begin{align*}
\psi\co\left(B^2(\sqrt{2}),\omega_{\C^4}\right)&\to \left(S^2\setminus\{-e_1\},\frac{1}{2}\omega_{\mbox{\scriptsize std}}\right) \\
re^{i\theta}&\mapsto \left( 1-r^2 , r\cos\theta\sqrt{2-r^2} , r\sin\theta\sqrt{2-r^2} \right)^\top
\end{align*}
is a symplectomorphism (shown by a standard computation) and \[ \Theta_{CS} = \{(e^{it}z,e^{-it}z)\mid z\in \Gamma,t\in[0,2\pi]\}\subset B^2(\sqrt{2})\times B^2(\sqrt{2}) \] for a curve $\Gamma\subset\mathbb{H}(\sqrt{2})$ enclosing area $\frac{\pi}{2}$ (the Hamiltonian isotopy class of $T_{CS}$ is easily seen to be independent of the particular choice of $\Gamma$).  Alternatively, $\Theta_{CS}$ is given as the orbit of the curve $\Delta_\Gamma=\{(z,z)\mid z\in \Gamma\}$ under the circle action \[ e^{it}\cdot (z_1,z_2) = (e^{it}z_1,e^{-it}z_2).\]  Another simple computation shows that \[ \psi\left(e^{it} r e^{i\theta}\right) =  R_t\thinspace\psi(re^{i\theta}), \mbox{ where } R_t = \begin{pmatrix} 1 & 0 & 0 \\ 0 & \cos t & -\sin t \\ 0 & \sin t & \cos t \end{pmatrix} \in SO(3), \] from which it follows that \[ T_{CS} = \psi\times \psi(\Theta_{CS}) = \{ \left( R_t\thinspace\psi(z), R_{-t}\thinspace \psi(z) \right)\mid t\in [0,2\pi ],z\in \Gamma\}. \]  In other words, $T_{CS}$ is the orbit of the curve $\psi\times \psi\left(\Delta_{\Gamma}\right)$ under the following circle action, denoted $\rho_{CS}$, on $S^2\times S^2$: \[ \rho_{CS}(e^{it})\cdot(v,w) = \left( R_t\thinspace v, R_{-t}\thinspace w\right). \] 

On the other hand, if we consider the smooth embedded curve $C\subset S^2\times S^2$ parametrized by
\begin{align*}
[0,2\pi] & \to S^2\times S^2 \\
s &\mapsto \left( \left( -\frac{\sqrt{3}}{2}\sin(s) , -\frac{\sqrt{3}}{2}\cos(s) , \frac{1}{2}  \right)^\top , \left( \frac{\sqrt{3}}{2}\sin(s) , \frac{\sqrt{3}}{2}\cos(s) , \frac{1}{2}  \right)^\top \right),
\end{align*}
then we claim that the torus $T_{EP}$ is the orbit of $C$ under the following action of the circle on $S^2\times S^2$: \[ \rho_{EP}(e^{it})\cdot (v,w) = \left( R_t\thinspace v, R_t\thinspace  w\right). \]  Indeed, we note that $T_{EP}$ is the regular level set $(F_1,F_2)^{-1}\left(0,-\frac{1}{2}\right)$ for the $\R^2$-valued function $(F_1,F_2)\co(v,w) \mapsto \left( -(v+w)\cdot e_1, v\cdot w \right)$.  The Hamiltonian vector fields associated to $F_1$ and $F_2$ are $X_{F_1}(v,w) = \left( e_1\times v, e_1 \times w \right)$ and $X_{F_2}(v,w) = \left( v\times w, w\times v\right)$, respectively.  We then observe that the curve $C$ is the orbit of the point $\left( \left( 0,-\frac{\sqrt{3}}{2},\frac{1}{2}\right)^\top , \left( 0,\frac{\sqrt{3}}{2},\frac{1}{2}\right)^\top \right)\in T_{EP}$ under the Hamiltonian flow for $F_2$, and thus the torus $T_{EP}$ is exactly the orbit of $C$ under the Hamiltonian flow for $F_1$.  Noting that the Hamiltonian flow for $F_1$ gives the circle action $\rho_{EP}$, we see that $T_{EP}$ is the orbit of $C$ under the action $\rho_{EP}$ as claimed.

Next, we use the observation of Gadbled in \cite{Ga} that the actions $\rho_{EP}$ and $\rho_{CS}$ are conjugate in $SO(3)\times SO(3)$.  Indeed a simple computation shows that \[ \left(R_t, R_t\right) = \left(\mathcal{Q}_1, \mathcal{Q}_2\right)^{-1} \left(R_t, R_{-t}\right) \left(\mathcal{Q}_1, \mathcal{Q}_2\right) \] for $\mathcal{Q}_1$ the identity and  $\mathcal{Q}_2 = \left( \begin{smallmatrix} -1 & 0 & 0 \\ 0 & -1 & 0 \\ 0 & 0 & 1 \end{smallmatrix}\right)$.  Hence, it follows that 
\begin{equation}\label{gadbledeqn}
(\mathcal{Q}_1\, \mathcal{Q}_2)\left(\rho_{EP}(e^{it})\cdot(v,w) \right) = \rho_{CS}(e^{it})\cdot \left( (\mathcal{Q}_1,\mathcal{Q}_2)(v,w) \right),
\end{equation}
and we define $T_{EP}^\prime$ to be the orbit of the curve $(\mathcal{Q}_1, \mathcal{Q}_2)(C)$ under the action of $\rho_{CS}$.  Where $\Gamma^\prime\subset S^2$ is the curve parametrized by $s\mapsto \left(-\frac{\sqrt{3}}{2}\sin(s), -\frac{\sqrt{3}}{2}\cos(s), \frac{1}{2} \right)^\top $, we observe that $(\mathcal{Q}_1, \mathcal{Q}_2)(C)$ is the curve $\Delta_{\Gamma^\prime}=\{(v,v)\in S^2\times S^2\mid v\in \Gamma^\prime\}$ in the diagonal of $S^2\times S^2$.  Where $D^2\subset \C$ is the closed disk of radius $1$, we observe that the disk $D^\prime\subset S^2\setminus\{-e_1\}$ parametrized by 
\begin{align*}
g\co D^2&\to S^2\setminus\{-e_1\} \\
re^{i\phi}&\mapsto \left( \frac{\sqrt{3}}{2}r\sin(\phi), -\frac{\sqrt{3}}{2}r\cos(\phi), \sqrt{1-\frac{3}{4}r^2} \right)^\top 
\end{align*}
has boundary $\Gamma^\prime$, and a routine computation shows that \[ g^* \left(\frac{1}{2}\omega_{\mbox{\scriptsize std}}\right) = \frac{3r}{4\sqrt{4-3r^2}}\thinspace dr\wedge d\phi. \]  Thus, $D^\prime$ has area \[ \int_{D^\prime} \frac{1}{2}\omega_{\mbox{\scriptsize std}} = \int_{D^2} g^* \left( \frac{1}{2}\omega_{\mbox{\scriptsize std}} \right) = \int_{0}^{2\pi} \int_{0}^{1} \frac{3r}{4\sqrt{4-3r^2}}\thinspace dr\thinspace d\phi = \int_{0}^{2\pi} \frac{1}{4} \thinspace d\phi = \frac{\pi}{2}, \] which means that $\Gamma^\prime$ encloses a domain of area $\frac{\pi}{2}$ in $S^2\setminus\{-e_1\}$.  It then follows that the curve $\psi^{-1}(\Gamma^\prime)$ encloses an area of $\frac{\pi}{2}$ since $\psi$ is a symplectomorphism, and it is not difficult to see that $\psi^{-1}(\Gamma^\prime)$ also lies in $\mathbb{H}(\sqrt{2})$ since $\psi$ maps $\mathbb{H}(\sqrt{2})$ to the hemisphere $\{v\in S^2\mid v_3>0\}$.

Finally, taking the curve $\Gamma$ in Chekanov and Schlenk's construction to be the curve $\psi^{-1}(\Gamma^\prime)$, the corresponding torus $T_{CS}\subset S^2\times S^2$ is exactly the orbit of the curve \[ \psi\times \psi\left(\Delta_\Gamma\right) = \Delta_{\Gamma^\prime} = (\mathcal{Q}_1,\mathcal{Q}_2)(C) \] under the action of $\rho_{CS}$; in other words, $T_{CS}=T_{EP}^\prime$.  Now, by \eqref{gadbledeqn} and the fact that $T_{EP}$ is the orbit of $C$ under the action $\rho_{EP}$, it is clear that $T_{EP}^\prime$ is nothing more than the image of $T_{EP}$ under the map $(\mathcal{Q}_1,\mathcal{Q}_2)$, and thus $T_{CS} = T_{EP}^\prime = \left(\mathcal{Q}_1, \mathcal{Q}_2\right)\left(T_{EP}\right)$.  Since $(\mathcal{Q}_1,\mathcal{Q}_2)$ is a symplectomorphism (in fact a Hamiltonian diffeomorphism), the desired result has been obtained. 
\end{proof}

\section{Lagrangian Submanifolds of $\C P^2$}\label{sectionCP2}

\subsection{A dense Darboux-Weinstein neighborhood of $\R P^n\subset \C P^n$}

Just as the symplectomorphism $\Phi_2\co (S^2\times S^2)\setminus\Delta\to D_{1}^{*}S^n$ played a prominent role in the proof of Proposition \ref{FOOOtoEP}, in comparing the various Lagrangian tori in Theorem \ref{maincp2} it will be crucial to have a symplectic identification of the disk bundle $D_{1}^{*}\R P^2$ with a dense subset of $\C P^2(\sqrt{2})$.  In fact, it will be no more difficult to construct a version of this for $D_{1}^{*}\R P^{n}$ for arbitrary $n$, and this will be useful later in the construction of the manifolds $L_{k,m}^{P}\subset \C P^{k+m+1}(\sqrt{2})$.

We view the symplectic manifold $T^*\R P^n$ as the quotient of $T^*S^n$ by the antipodal involution $(p,q)\mapsto (-p,-q)$.  
For $r>0$, the radius-$r$ disk bundle $D_{r}^{*}\R P^n$ then consists of pairs $[(p,q)]$ with $|p|<r$.

Define a function $f\co [0,1)\to \R$ by $f(0)=\frac{1}{2}$ and, for $0<x<1$, \begin{equation}\label{fdef} f(x)=\frac{1-\sqrt{1-x^2}}{x^2}. \end{equation} It is easy to check that $f$ is $C^{\infty}$ on $[0,1)$, and also that $f$ satisfies the identity \begin{equation}\label{fidentity} x^2f(x)+\frac{1}{f(x)} = 2    \end{equation} for all $x\in [0,1)$.   

\begin{lemma}\label{rpncpn} Where $\C P^{n}(\sqrt{2})$ denotes the coisotropic reduction of the sphere of radius $\sqrt{2}$ in $\C^{n+1}$ and where $f$ is the function from (\ref{fdef}), the map \begin{align*} \Psi^P\co D_{1}^{*}\R P^{n} &\to \C P^{n}(\sqrt{2}) \\ [(p,q)]&\mapsto \left[\sqrt{f(|p|)}p+\frac{i}{\sqrt{f(|p|)}}q \right] \end{align*} is a symplectomorphism to its image, which is equal to the complement of the quadric $Q_{n}(\sqrt{2}) = \left\{[z_0:\cdots:z_n]\in \C P^{n}(\sqrt{2})\left|\sum z_{j}^{2}=0\right.\right.\}$.
\end{lemma}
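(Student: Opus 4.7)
The plan is to lift $\Psi^{P}$ to a map $\tilde{\Psi}\co D_{1}^{*}S^n \to \C^{n+1}$ given by the same formula, verify it takes values in $S^{2n+1}(\sqrt{2})$, descends under the antipodal involution $(p,q)\mapsto(-p,-q)$ to $\Psi^{P}$, pulls the standard symplectic form on $\C^{n+1}$ back to $d\lambda$ on $T^{*}S^n$, and bijects the complement of the quadric with the image.

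Well-definedness and descent will be immediate algebra: using $|q|=1$ and the identity \eqref{fidentity}, one has $|\tilde{\Psi}(p,q)|^{2} = f(|p|)|p|^{2} + 1/f(|p|) = 2$, so $\tilde{\Psi}$ lands in $S^{2n+1}(\sqrt{2})$; and since $\tilde{\Psi}(-p,-q) = -\tilde{\Psi}(p,q)$, composition with the Hopf projection $S^{2n+1}(\sqrt{2})\to\C P^n(\sqrt{2})$ gives a well-defined map from the quotient $D_{1}^{*}\R P^n$.

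For the symplectic content, I would work with the Liouville primitive $\alpha = \frac{1}{2}\sum_{j}(x_j\, dy_j - y_j\, dx_j)$ on $\C^{n+1}$, whose differential is the standard form. Writing $\tilde{\Psi}(p,q) = a + ib$ with $a = \sqrt{f(|p|)}\thinspace p$ and $b = q/\sqrt{f(|p|)}$, a direct expansion yields $\tilde{\Psi}^{*}\alpha = \frac{1}{2}(p\cdot dq - q\cdot dp)$, because all terms involving $df(|p|)$ appear weighted by $p\cdot q$, which vanishes on $T^{*}S^n$. The same constraint $p\cdot q\equiv 0$ gives $p\cdot dq + q\cdot dp = 0$ on $T^{*}S^n$ upon differentiation, and hence $\tilde{\Psi}^{*}\alpha = p\cdot dq = \lambda$. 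Taking exterior derivatives and invoking the defining property of the symplectic form on $\C P^n(\sqrt{2})$ as the reduction of the standard form on $S^{2n+1}(\sqrt{2})$, this identifies $(\Psi^{P})^{*}\omega_{\C P^n(\sqrt{2})}$ with the canonical form $d\lambda$ on $T^{*}\R P^n$.

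For the image, one computes
\[
\sum_{j}\tilde{\Psi}_{j}(p,q)^{2} = f(|p|)|p|^{2} - \frac{1}{f(|p|)} + 2i\thinspace p\cdot q = 2f(|p|)|p|^{2} - 2,
\]
which is nonzero for $|p|<1$ (since $f(|p|)|p|^{2} = |p|^{2}/(1+\sqrt{1-|p|^{2}}) < 1$), so the image avoids $Q_n(\sqrt{2})$. Conversely, any $[z]\notin Q_n(\sqrt{2})$ admits a representative $z = a + ib$ with $|z|^{2}=2$ and $\sum z_j^{2}$ real and negative; this forces $a\cdot b = 0$ and $|a| < 1 < |b|$. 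Setting $p = a|b|$, $q = b/|b|$ gives a preimage once one verifies the algebraic identity $f(|a||b|) = 1/|b|^{2}$, which follows from $|a|^{2}+|b|^{2}=2$ via the simplification $\sqrt{1-|a|^{2}|b|^{2}} = 1 - |a|^{2}$. The residual $\pm$ ambiguity in $z$ matches the antipodal quotient, giving a bijection $D_{1}^{*}\R P^n\to\C P^n(\sqrt{2})\setminus Q_n(\sqrt{2})$. Injectivity of $d\Psi^{P}$ is automatic from the symplectic pullback, so the bijection is a diffeomorphism, hence a symplectomorphism.

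The main technical hurdle will be organizing the pullback computation $\tilde{\Psi}^{*}\alpha = \lambda$ cleanly: this rests on the two clean cancellations that the coefficient of $df(|p|)$ is proportional to $p\cdot q$, and that $p\cdot dq = -q\cdot dp$ follows from the differentiated form of the constraint $p\cdot q = 0$. Everything else is essentially algebraic manipulation of the identity \eqref{fidentity}.
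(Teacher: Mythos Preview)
Your proof is correct and follows essentially the same route as the paper's: lift to $S^{2n+1}(\sqrt{2})$, use the identity \eqref{fidentity} for well-definedness, compute $\sum z_j^2$ to identify the image with the complement of the quadric, construct an explicit inverse for bijectivity, and pull back a Liouville primitive to get $\lambda$. The only cosmetic differences are that the paper uses the primitive $\alpha = \sum x_j\,dy_j$ (so the computation $(\tilde\Psi^P)^*\alpha = \lambda$ needs only $p\cdot q=0$, not its differentiated form), and the paper separates injectivity from surjectivity rather than packaging both into an inverse map; neither changes the substance.
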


\begin{proof}
We must first show that $\Psi^P$ is well-defined, which evidently will be true provided that, for any $(p,q)\in D_{1}^{*}S^n$, the element  
$\sqrt{f(|p|)}p+\frac{i}{\sqrt{f(|p|)}}$ of $\C^{n+1}$ lies on the sphere of radius $\sqrt{2}$ (and hence has a well-defined projection to $\C P^{n}(\sqrt{2})$; clearly this projection would then be invariant under the antipodal involution of $D_{1}^{*}S^n$).  For $j=0,\ldots,n$ define $z_j\co D_{1}^{*}S^n  \to \C$ by \[ z_j(p,q) = \sqrt{f(|p|)}p_j+\frac{iq_j}{\sqrt{f(|p|)}} \]  We then have, for $(p,q)\in D_{1}^{*}S^n$ (so that $|q|=1$, $|p|<1$, $p\cdot q=0$) \[\sum_j |z_j(p,q)|^2 = f(|p|)\sum p_{j}^{2}+\frac{1}{f(|p|)}\sum q_{j}^{2} = |p|^2f(|p|)+\frac{1}{f(|p|)} =2\] where the last equality follows from (\ref{fidentity}).  So the map $\tilde{\Psi}^P\co D_{1}^{*}S^n\to \C^{n+1}$ defined by \[ \tilde{\Psi}^{P}(p,q) = \sqrt{f(|p|)}p+\frac{i}{\sqrt{f(|p|)}} \] indeed takes values in the sphere of radius $\sqrt{2}$, and so $\Psi^P$ is well-defined as a map to $\C P^{n}(\sqrt{2})$.

Next we claim that $\Psi^P$ has image contained in $\C P^{n}(\sqrt{2})\setminus Q_{n}(\sqrt{2})$.  Indeed for $(p,q)\in D_{1}^{*}S^n$ we see that \begin{align} \sum z_{j}(p,q)^2 &= f(|p|)\sum p_{j}^{2}-\frac{1}{f(|p|)}\sum q_{j}^{2} +2i\sum p_jq_j \nonumber
\\&=|p|^2f(|p|) - \frac{1}{f(|p|)} = 2|p|^2f(|p|)-2 = -2\sqrt{1-|p|^2} < 0 \label{sumz2} \end{align} (since $|p|<1$), proving the claim.

Suppose that $(p,q),(p',q')\in D_{1}^{*}S^n$ have the property that $\Psi^P([(p,q)]) = \Psi^P([(p',q')])$; we will show that $(p',q') = \pm(p,q)$.  Write \[ (z_0,\ldots,z_n)=\tilde{\Psi}^{P}(p,q) \qquad (z'_0,\ldots,z'_n) = \tilde{\Psi}^{P}(p',q') \]  Then for some $e^{i\theta}\in S^1$ we have $(z_0,\ldots,z_n) = e^{i\theta}(z'_0,\ldots,z'_n)$, but the calculation in the previous paragraph shows that $\sum z_{j}^{2}$ and $\sum (z'_{j})^{2}$ are both negative numbers, which forces $e^{2i\theta}$ to be $1$ and hence $e^{i\theta}=\pm 1$; moreover since $|\sum z_{j}^{2}|=|\sum (z'_{j})^{2}|$ we will have $|p|=|p'|$.  So the assumption that $\Psi^P([(p,q)])=\Psi^P([(p',q')])$  implies that \[ f(|p|)p+\frac{i}{f(|p|)}q = \pm\left( f(|p|)p'+\frac{i}{f(|p|)}q'\right),\] which indeed forces $(p,q)=\pm(p',q')$ and hence  $[(p,q)]=[(p',q')]$.  Thus our map $\Psi^{P}\co D_{1}^{*}\R P^n\to \C P^{n}(\sqrt{2})\setminus Q_n(\sqrt{2})$ is injective.

We now show that the image of $\Psi^{P}$ is all of $\C P^{n}(\sqrt{2})\setminus Q_n(\sqrt{2})$.  Any element of  $\C P^{n}(\sqrt{2})\setminus Q_n(\sqrt{2})$ can be represented in the form $[u+iv]$ where $u,v\in \R^{n+1}$ have $|u|^2+|v|^2=2$ and $\sum (u_j+iv_j)^2<0$.  The latter condition amounts to the statements that $|u|<|v|$ and that $u\cdot v=0$.  
Now from the definition of $f$ one finds that \begin{align*} f(|u||v|)|u|^2|v|^2 &= 1-\sqrt{1-|u|^2|v|^2} = 1-\sqrt{1-|u|^2(2-|u|^2)}
\\&=1-\sqrt{(1-|u|^2)^2} = |u|^2\end{align*} and hence that $ \sqrt{f(|u||v|)}|v| = 1$.  Thus we have \[ \Psi^P\left(\left[|v|u,\frac{v}{|v|}\right]\right) = \left[\sqrt{f(|u||v|)}|v|u+\frac{i}{\sqrt{f(|u||v|)}}\frac{v}{|v|}\right] = [u+iv] \] (The fact that $|v||u|<1$ follows from the facts that $0<(|v|-|u|)^2$ and $|v|^2+|u|^2=2$.)  So indeed all points of $\C P^{n}(\sqrt{2})\setminus Q_n(\sqrt{2})$ lie in the image of $\Psi^P$.

Finally we show that $\Psi^P$ is a symplectomorphism to its image.  Given what we have already done, this will follow once we show that where $\alpha=\sum_{i=0}^{n}x_jdy_j\in \Omega^1(\C^{n+1})$ and $\lambda\in \Omega^1(D_{1}^{*}S^n)$ is the canonical one-form, we have $(\tilde{\Psi}^{P})^{*}\alpha=\lambda$.  Now for $(p,q)\in D_{1}^{*}S^n$, \begin{align*}\left((\tilde{\Psi}^{P})^{*}\alpha\right)_{(p,q)} &= \sum_j\sqrt{f(|p|)}p_jd\left(\frac{q_j}{\sqrt{f(|p|)}}\right) \\&= \sum_jp_jdq_j +\left(\sum_j p_jq_j\right)\sqrt{f(|p|)}d\left(\frac{1}{\sqrt{f(|p|)}}\right) = \sum_j p_jdq_j \end{align*} since the fact that $(p,q)\in D_{1}^{*}S^n$ implies that $p\cdot q=0$.  So indeed $\tilde{\Psi}^{P}$ pulls back $\alpha$ to the canonical one-form $\lambda$, in view of which $\Psi^P\co D_{1}^{*}\R P^n\to \C P^{n}(\sqrt{2})\setminus Q_n(\sqrt{2})$ pulls back the Fubini-Study form to the standard symplectic form on $D_{1}^{*}\R P^n$.  In particular this implies that $\Psi^P$ is an immersion (and so also a submersion by dimensional considerations).  So since $\tilde{\Psi}^P$ maps $D_{1}^{*}\R P^n$ bijectively to the open subset $\C P^{n}(\sqrt{2})\setminus Q_n(\sqrt{2})$ of $\C P^{n}(\sqrt{2})$ it follows that $\Psi^P$ is a symplectomorphism to this subset.\end{proof}

\begin{remark}\label{pullbacks}
It follows from (\ref{sumz2}) that $\Psi^P$ pulls back the function $H\co [z_0:\cdots:z_n]\mapsto \frac{1}{4}\sqrt{4-|\sum z_{j}^{2}|^2}$ on $\C P^{n}(\sqrt{2})$ to 
$(p,q)\mapsto \frac{1}{4}\sqrt{4-4(1-|p|^2)} = \frac{|p|}{2}$.  In particular since $(p,q)\mapsto \frac{|p|}{2}$ generates a Hamiltonian $S^1$-action on $D_{1}^{*}\R P^{n}\setminus 0_{\R P^n}$ (given by the geodesic flow) it follows that $H$ generates a Hamiltonian $S^1$-action on $\C P^{n}\setminus \R P^n$.  (The function $H$ fails to be smooth along $\R P^n$.)   Meanwhile for $0\leq i<j\leq n$ the Hamiltonian $G_{ij}\co [z_0:\cdots:z_n]\mapsto Im(\bar{z}_iz_j)$ pulls back by $\Psi^P$ to $[(p,q)]\mapsto p_{i+1}q_{j+1}-p_{j+1}q_{i+1}$.  It is easy to see that $H$ Poisson-commutes with each of the functions $G_{ij}$.

When $n=2$, we thus have a Hamiltonian torus action on $\C P^{n}(\sqrt{2})\setminus \R P^n$ with moment map given by $(H,G_{12})$.
\end{remark}

\subsection{Equivalences in $\mathbb{C}P^2$}\label{cp2eq}

The torus $T_{AF}^{P}$ from Theorem \ref{maincp2} is, by definition, the image under $\Psi^P\co D_{1}^{*}\R P^{2}\to\C P^{2}(\sqrt{2})\setminus Q_2(\sqrt{2})$ of the torus \[ \underline{\mathcal{P}}_{0,1}^{1/3} = \left\{[(p,q)]\in T^{*}\R P^{2} \left|\thinspace |p| = 1/3,\, (p\times q)\cdot e_1=0\right.\right\} \]  By Remark \ref{pullbacks}, writing $\Psi^P([(p,q)]) = [z_0:z_1:z_2]$, we have $(p\times q)\cdot e_1=0$ (\emph{i.e.}, $p_2q_3-p_3q_2=0$) if and only if $Im(\bar{z}_1z_2)=0$, and $|p|=\frac{1}{3}$ if and only if $\sqrt{4-|\sum z_{j}^{2}|^2}=\frac{2}{3}$, \emph{i.e.} if and only if $|\sum z_{j}^{2}|=\frac{4\sqrt{2}}{3}$.  This proves that $T_{AF}^{P}$ is equal to the torus $L_{0,1}^{P}$ that is defined in Theorem \ref{maincp2}.

We now recall the constructions of the tori $T_{CS}^{P}$ and $T_{W}$ as presented in \cite{CS} and \cite{Wu} with some minor modifications pertaining to normalization.  In \cite{CS}, one begins with a curve $\Gamma_P$ enclosing an area of $\frac{\pi}{3}$ and contained in the open upper half disk $\mathbb{H}(1)=\{z\in \C\mid Im(z)>0,|z|<1\}$.  The curve $\Delta_{\Gamma_P} = \{(z,z)\mid z\in \Gamma_P\}$ then lies in the diagonal of $B^2(1)\times B^2(1)$, which is itself contained in the 4-dimensional Euclidean ball $B^4(\sqrt{2})$; then one considers the torus $\Theta_{CS}^P$ in $B^4(\sqrt{2})$ given as the orbit of $\Delta_{\Gamma_P}$ under the circle action \[ e^{it}\cdot (z_1,z_2) = (e^{it}z_1,e^{-it}z_2). \]  More explicitly, we have \[ \Theta_{CS}^P = \{(e^{it}z,e^{-it}z)\mid z\in \Gamma_P,t\in[0,2\pi]\}. \]  Finally, one symplectically embeds $B^4(\sqrt{2})$ in $\C P^2(\sqrt{2})$ and defines the torus $T_{CS}^P$ to be the image of $\Theta_{CS}^P$ under such an embedding.

In \cite{Wu}, one begins with a symplectic toric orbifold that is denoted $F_4(0)$ and whose moment polytope is \[ \Delta_{W}=\left\{(x,y)\in \R^2\left| \thinspace 0\leq x \leq 2, 0 \leq y \leq \frac{1}{2}-\frac{1}{4}x\right.\right\} \] with exactly one singular point sitting over the point $(0,1/2)\in \Delta_{W}$.  Then, by replacing a neighborhood of the singular point with a neighborhood of the zero-section of the cotangent bundle $T^{*}\R P^2$, one obtains a manifold denoted $\hat{F}_4(0)$ that is symplectomorphic to $\C P^2(\sqrt{2})$.  The monotone Lagrangian torus $T_{W}$ is then described as the image of the fiber over the point $(1/3,1/3)\in \Delta_{W}$ under a symplectomorphism $\hat{F}_4(0)\to \C P^2(\sqrt{2})$.




\begin{prop}\label{WtoL01P}
There is a symplectomorphism $\C P^2(\sqrt{2})\to \C P^2(\sqrt{2})$ taking $T_{W}$ to $L_{0,1}^P$.
\end{prop}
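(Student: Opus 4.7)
The strategy is to mimic the proof of Proposition \ref{FOOOtoEP}: give an explicit construction of the orbifold $\hat{F}_4(0)$ as a gluing that is manifestly symplectomorphic to $\C P^2(\sqrt{2})$, under which $T_W$ becomes a specific joint level set of the Hamiltonians $H$ and $G_{12}$ from Remark \ref{pullbacks}.  Since $T_{AF}^P = L_{0,1}^P$ has already been established in the paragraphs preceding the proposition, the goal is to identify $T_W$ directly with $L_{0,1}^P$.

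First, I would form the symplectic four-fold cover $\Upsilon\co B^4(2)\setminus\{0\}\to\C P^2(\sqrt{2})\setminus(Q_2\cup\R P^2)$ as the composition of $\varphi_1$ from Lemma \ref{cotball}, the antipodal projection $\pi\co D_1^*S^2\to D_1^*\R P^2$, and $\Psi^P$ from Lemma \ref{rpncpn}.  A short quaternion computation shows that on $\xi = z_1+z_2 j$, left multiplication by $i$ (acting on $\C^2$ as $(z_1,z_2)\mapsto(iz_1,iz_2)$) descends via $\varphi_1$ to the antipodal involution of $D_1^*S^2$, so the deck group of $\Upsilon$ is $\{\pm 1,\pm i\}\cong\Z/4\Z$, and $\Upsilon$ factors through a symplectomorphism $A_P\co (B^4(2)\setminus\{0\})/\Z_4\to\C P^2(\sqrt{2})\setminus(Q_2\cup\R P^2)$.

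Next, glue $B^4(2)/\Z_4$ to $\C P^2(\sqrt{2})\setminus\R P^2$ along $A_P$ to produce a compact symplectic orbifold $\mathcal{O}_W$ with one singular point at $[(0,0)]$.  Using Lemma \ref{cotball} and Remark \ref{pullbacks}, $H$ and $G_{12}$ pull back on the ball side to the smooth $\Z_4$-invariant expressions $(|z_1|^2+|z_2|^2)/8$ and $(|z_1|^2-|z_2|^2)/4$, so the affine combination
\[ (m_1,m_2) = \bigl(2H+G_{12},\;\tfrac{1}{2}-H\bigr) \]
is a global moment map on $\mathcal{O}_W$.  Its image is the convex hull of the values at the three $T^2$-fixed points: the singular point $[(0,0)]$ maps to $(0,1/2)$, while $[0:1:i],\,[0:1:-i]\in Q_2(\sqrt{2})$ map to $(2,0)$ and $(0,0)$ respectively; the primitive inward normals at the singular vertex compute to $(1,0)$ and $(-1,-4)$, so the local model at $[(0,0)]$ is the expected $\C^2/\Z_4$ toric orbifold.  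By the toric orbifold classification of \cite{LT}, $\mathcal{O}_W$ is identified with Wu's $F_4(0)$, and replacing a neighborhood of $[(0,0)]$ by a neighborhood of $0_{\R P^2}$ in $D_1^*\R P^2$ (via the map induced by $\pi\circ\varphi_1$) and then embedding this piece into $\C P^2(\sqrt{2})\setminus Q_2(\sqrt{2})$ via $\Psi^P$ produces an explicit symplectomorphism $\hat{F}_4(0)\to\C P^2(\sqrt{2})$ which is the identity on the $\C P^2(\sqrt{2})\setminus\R P^2$ piece.

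Under this symplectomorphism, the torus $T_W=(m_1,m_2)^{-1}(1/3,1/3)$ lies entirely in $\C P^2(\sqrt{2})\setminus\R P^2$, and the equations $m_1=1/3$, $m_2=1/3$ translate directly into $H=1/6$ and $G_{12}=0$, i.e., $|\sum z_j^2|=4\sqrt{2}/3$ and $Im(\bar{z}_1z_2)=0$, which is precisely $L_{0,1}^P$.  The principal technical obstacle is the verification that $(m_1,m_2)$ realizes $\Delta_W$ with the correct $\Z/4$ orbifold structure at the singular vertex, since the Lerman--Tolman classification only applies after one confirms both the polytope and the local character of the $T^2$-action at the singular point.
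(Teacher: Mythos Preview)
Your proposal is correct and follows essentially the same route as the paper: the composition $\Psi^P\circ(\pi\circ\varphi_1)$ is precisely the paper's $\Psi^P\circ\underline{\Phi}_1$ (with your $\Z_4$ being the paper's $\langle i\rangle$), your Hamiltonians $H,G_{12}$ are the paper's $\underline{F},\underline{G}$, and the gluing, the appeal to \cite{LT}, and the final identification $(2H+G_{12},\tfrac12-H)^{-1}(1/3,1/3)=L_{0,1}^P$ all match the paper exactly. The only cosmetic difference is that the paper packages your quaternion computation as a separate lemma (Lemma \ref{cotball2}) rather than inlining it.
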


Just as in the case of $T_{FOOO}\subset S^2\times S^2$, most of our task will consist of giving a very explicit symplectomorphism of the manifold $\hat{F}_4(0)$ with $\C P^2(\sqrt{2})$; once this is achieved we will see almost immediately that our symplectomorphism maps $T_W$ to $L_{0,1}^{P}$



As above we identify $T^*\R P^2$ as the quotient of $T^*S^2$ by the antipodal involution.  We denote the zero section of $T^*\R P^2$ by $0_{\R P^2}$.

\begin{lemma}\label{cotball2} Where $\langle i \rangle$ is the multiplicative subgroup of $\C^*$ generated by $i=\sqrt{-1}$, the map $\varphi_1\co B^4(2)\setminus\{0\}\to D_{1}^{*}S^2\setminus 0_{S^2}$ from Lemma \ref{cotball} descends to a symplectomorphism $\underline{\Phi}_1\co (B^4(2)\setminus\{0\})/\langle i \rangle \to D_{1}^{*} \R P^2 \setminus 0_{\R P^2}$.  Moreover, where $f_{T^*\R P^2}([(p,q)]) = \frac{1}{2}|p|$ and $g_{T^*\R P^2}([(p,q)]) = (p\times q)\cdot e_1$, we have \[ 
f_{T^{*}\R P^2}\circ \underline{\Phi}_1 ([z_1+z_2j]) = \frac{1}{8}(|z_1|^2+|z_2|^2) \mbox{ and } g_{T^{*}\R P^2}\circ \underline{\Phi}_1([z_1+z_2j]) = \frac{1}{4}(|z_1|^2-|z_2|^2) \] for $z_1,z_2\in \C$ with $0<|z_1|^2+|z_2|^2<4$.
\end{lemma}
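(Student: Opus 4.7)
The plan is to reduce the lemma to Lemma \ref{cotball} by showing that the $\langle i\rangle$-action on $B^4(2)\setminus\{0\}$ intertwines under $\varphi_1$ with the antipodal involution $(p,q)\mapsto(-p,-q)$ on $D_1^*S^2\setminus 0_{S^2}$. The main computation is to determine $\varphi_1(i\xi)$. Using $\overline{i\xi}=\bar{\xi}\bar{i}=-\xi^* i$, one obtains $(i\xi)^*k(i\xi)=-\xi^*(iki)\xi$ and $(i\xi)^*j(i\xi)=-\xi^*(iji)\xi$; the quaternion identities $iki=i(ki)=ij=k$ and $iji=(ij)i=ki=j$ then give $\varphi_1(i\xi)=-\varphi_1(\xi)$. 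Combined with the identity $\varphi_1(-\xi)=\varphi_1(\xi)$ established in Lemma \ref{cotball}, this shows that each $\langle i\rangle$-orbit in $B^4(2)\setminus\{0\}$ is carried by $\varphi_1$ into a single fiber of the antipodal quotient $D_1^*S^2\setminus 0_{S^2}\to D_1^*\R P^2\setminus 0_{\R P^2}$, so $\varphi_1$ descends to a well-defined map $\underline{\Phi}_1$.

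For bijectivity, suppose $\underline{\Phi}_1([\xi_1])=\underline{\Phi}_1([\xi_2])$; then either $\varphi_1(\xi_1)=\varphi_1(\xi_2)$ or $\varphi_1(\xi_1)=-\varphi_1(\xi_2)=\varphi_1(i\xi_2)$, and in each case Lemma \ref{cotball} forces $\xi_1\in\{\pm\xi_2,\pm i\xi_2\}=\langle i\rangle\xi_2$, proving injectivity. Surjectivity is immediate from the surjectivity of $\varphi_1$ together with that of the antipodal quotient. For the symplectic structure, multiplication by $i$ on $\R^4\cong\C^2$ is complex-linear and unitary, hence symplectic for the standard form, and it acts freely on $B^4(2)\setminus\{0\}$; the quotient therefore inherits a symplectic form, and since $\varphi_1$ is a symplectic local diffeomorphism by Lemma \ref{cotball}, the bijection $\underline{\Phi}_1$ is automatically a symplectomorphism.

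Finally, the two formulas are immediate consequences of the corresponding formulas in Lemma \ref{cotball}. Since $f_{T^*\R P^2}([(p,q)])=\frac{1}{2}|p|=\frac{1}{2}f_{T^*S^2}(p,q)$, the Lemma \ref{cotball} formula for $f_{T^*S^2}\circ\varphi_1$ is simply halved, yielding $\frac{1}{8}(|z_1|^2+|z_2|^2)$. Since $g_{T^*\R P^2}$ descends $g_{T^*S^2}$ identically (because $(p\times q)\cdot e_1$ is invariant under $(p,q)\mapsto(-p,-q)$), one obtains $\frac{1}{4}(|z_1|^2-|z_2|^2)$ unchanged. The only real content is thus the quaternion identity $\varphi_1(i\xi)=-\varphi_1(\xi)$; everything else is formal bookkeeping built on top of Lemma \ref{cotball}.
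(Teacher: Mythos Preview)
Your proof is correct and follows essentially the same approach as the paper's: both arguments hinge on the quaternion computation $\varphi_1(i\xi)=-\varphi_1(\xi)$, and then descend to the appropriate quotients using Lemma~\ref{cotball}. The paper organizes it slightly differently---first descending to $(B^4(2)\setminus\{0\})/\pm 1$ and then observing $\Z/2\Z$-equivariance of the resulting symplectomorphism under the residual action of $i$---but this is only a cosmetic difference in presentation.
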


\begin{proof}
By Lemma \ref{cotball}, it is clear that the map $\varphi_1$ descends to a symplectomorphism $\Phi_1\co (B^4(2)\setminus\{0\})/\pm 1\to D_{1}^{*}S^2\setminus 0_{S^2}$, and the observation that
\begin{align*}
\varphi_1\left( i\xi \right) &= \left( -\frac{(i\xi)^\ast k (i\xi)}{4},\frac{(i\xi)^\ast j (i\xi)}{|i\xi|^2} \right) = \left(  -\frac{\xi^\ast i^\ast k i\xi}{4},\frac{\xi^\ast i^\ast j i\xi}{|\xi|^2} \right) \\
&= \left(  \frac{\xi^\ast i^\ast i k \xi}{4},-\frac{\xi^\ast i^\ast i j \xi}{|\xi|^2} \right) = \left(  \frac{\xi^\ast k \xi}{4},-\frac{\xi^\ast j \xi}{|\xi|^2} \right)
\end{align*}
shows that $\Phi_1$ is equivariant with respect to the $\Z/2\Z$ actions on $(B^4(2)\setminus\{0\})/\pm 1$ and $D_{1}^{*}S^2\setminus 0_{S^2}$ given by multiplication by $i$ and the induced action of the antipodal map $S^2\to S^2$, respectively.  Since the quotient of $(B^4(2)\setminus\{0\})/\pm 1$ by the $\Z/2\Z$ action is obviously symplectomorphic to $(B^4(2)\setminus\{0\})/\langle i \rangle$, it follows that $\varphi_1$ descends to a symplectomorphism $\underline{\Phi}_1\co (B^4(2)\setminus\{0\})/\langle i \rangle \to D_{1}^{*} \R P^2 \setminus 0_{\R P^2}$ exactly as claimed.  The computations of the maps $f_{T^{*}\R P^2}\circ \underline{\Phi}_1$ and $g_{T^{*}\R P^2}\circ \underline{\Phi}_1$ are nearly identical to the analogous ones in the proof of Lemma \ref{cotball}.
\end{proof}



We may now give a construction of the manifold $\hat{F}_4(0)$ and prove Proposition \ref{WtoL01P}.

\begin{proof}[Proof of Proposition \ref{WtoL01P}]
Since the preimage of the zero-section $0_{\R P^2}$ under the map $(\Psi^P)^{-1}$ is $\R P^2\subset \C P^2(\sqrt{2})$, it follows from Lemmas \ref{rpncpn} and \ref{cotball2} and Remark \ref{pullbacks}  that the map \[ \Psi^P\circ \underline{\Phi}_1\co \frac{B^{4}(2)\setminus \{0\} }{\langle i \rangle}\to \C P^2(\sqrt{2})\setminus (\R P^2\cup Q_2(\sqrt{2})) \] is a symplectomorphism which pulls back the function $[z_0:z_1:z_2]\mapsto Im(\bar{z}_1z_2)$ to the function $[(z_1,z_2)]\mapsto 
\frac{1}{4}(|z_1|^2-|z_2|^2)$ and pulls back the function $[z_0:z_1:z_2]\mapsto \frac{1}{4}\sqrt{4- \left| \sum z_j^2 \right|^2}$ to the function $[(z_1,z_2)]\mapsto 
\frac{1}{8}(|z_1|^2+|z_2|^2)$.

Consequently we may introduce the symplectic $4$-orbifold \[ \underline{\mathcal{O}} = \frac{(B^4(2)/\langle i \rangle)\coprod \left( \C P^2(\sqrt{2})\setminus \R P^2 \right)}{[(z_1,z_2)]\sim \Psi^P\circ \underline{\Phi}_1([(z_1,z_2)])\mbox{ for }(z_1,z_2)\neq (0,0)} \]  since the fact that $\Psi^P\circ \underline{\Phi}_1$ is a symplectomorphism shows that the the symplectic forms on $B^4(2)$ and on $\C P^2(\sqrt{2})\setminus \R P^2$ coincide on their overlap in $\underline{\mathcal{O}}$.
Moreover we have well-defined functions $\underline{F}\co \underline{\mathcal{O}}\to\R$ and $\underline{G}\co \underline{\mathcal{O}}\to\R$ defined by \begin{align*} \underline{F}([(z_1,z_2)])&=\frac{1}{8}(|z_1|^2+|z_2|^2)\qquad & \underline{G}([(z_1,z_2)]) = \frac{1}{4}(|z_1|^2-|z_2|^2)  \\
\underline{F}([z_0:z_1:z_2]) & = \frac{1}{4}\sqrt{4- \left| \sum z_j^2 \right|^2} \qquad & \underline{G}([z_0:z_1:z_2]) = Im(\bar{z}_1z_2)  \end{align*}
for $(z_1,z_2)\in B^4(2)$ and $[z_0:z_1:z_2]\in \C P^2(\sqrt{2})\setminus \R P^2$.

One easily verifies that the map $(2\underline{F}+\underline{G},1/2-\underline{F})\co \underline{\mathcal{O}}\to \R^2$ is a moment map for a symplectic toric action on the symplectic orbifold $\underline{\mathcal{O}}$, with image equal to the polytope $\Delta_{W}$.  The classification of toric orbifolds from \cite{LT} therefore implies that $\underline{\mathcal{O}}$ is equivariantly symplectomorphic to the orbifold $F_{4}(0)$ (as $\underline{\mathcal{O}}$ and $F_4(0)$ have identical moment polytopes and both have only one singular point, located at the preimage of $(0,1/2)$ under the moment map); accordingly we hereinafter implicitly identify $F_4(0)$ with $\underline{\mathcal{O}}$.  The manifold $\hat{F}_{4}(0)$ is then constructed by removing a neighborhood $\underline{\mathcal{U}}$ of the unique singular point $[(0,0)]$ of $\underline{\mathcal{O}}$ and gluing in its place a neighborhood $\underline{\mathcal{N}}$ of $0_{\R P^2}$ in the cotangent bundle $T^*\R P^2$, using a symplectomorphism between $\underline{\mathcal{U}}\setminus\{[(0,0)]\}$ and $\underline{\mathcal{N}}\setminus 0_{\R P^2}$.  While a particular choice of this symplectomorphism is not specified in \cite{Wu}, we have already constructed one that will serve the purpose, namely the map $\underline{\Phi}_1\co (B^4(2)\setminus\{0\})/\langle i \rangle \to D_{1}^{*}\R P^2\setminus 0_{R P^2}$ from Lemma \ref{cotball2}.  This gives a symplectomorphism between the manifold $\hat{F}_{4}(0)$ and the manifold \[ \frac{D_{1}^{*}\R P^2\coprod \left( \C P^2(\sqrt{2})\setminus \R P^2 \right)}{[(p,q)]\sim \Psi^P([(p,q)])\mbox{ for }[(p,q)]\in D_{1}^{*}\R P^2\setminus 0_{\R P^2}}. \]  But of course the map $\Psi^P$ then induces a symplectomorphism between this latter manifold and $\C P^2(\sqrt{2})$.  

There is an obvious continuous map $\underline{\Pi}\co \hat{F}_{4}(0)\to F_{4}(0)$ which maps the zero-section $0_{\R P^2}$ to the singular point $[(0,0)]$ and coincides with $\underline{\Phi}_{1}^{-1}$ on $D_{1}^{*}\R P^2\setminus 0_{\R P^2}\subset \hat{F}_{4}(0)$ and with the identity on $\C P^2(\sqrt{2})\setminus \R P^2 \subset \hat{F}_{4}(0)$; the monotone Lagrangian torus $T_{W}$ is the preimage of the point $(1/3,1/3)$ under the pulled-back moment map $((2\underline{F}+\underline{G})\circ \underline{\Pi},(1/2-\underline{F})\circ\underline{\Pi})\co \hat{F}_4(0)\to \R^2$.  In view of the expressions for the functions $\underline{F},\underline{G}$ on $\C P^2(\sqrt{2})\setminus \R P^2$, it follows that $T_{W}$ is taken by our symplectomorphism $\hat{F}_4(0)\to \C P^2(\sqrt{2})$ to \[ \left\{ [z_0:z_1:z_2] \left|\thinspace \frac{1}{2}\sqrt{4- \left| \sum z_j^2 \right|^2} + Im(\bar{z}_1z_2) = \frac{1}{3},\thinspace \frac{1}{2}-\frac{1}{4}\sqrt{4- \left| \sum z_j^2 \right|^2}=\frac{1}{3} \right.\right\},\]
\emph{i.e.} to \[ \left\{ [z_0:z_1:z_2] \left|\thinspace \left|\sum z_{j}^{2}\right| = \frac{4\sqrt{2}}{3},\,  Im(\bar{z}_1z_2) = 0 \right.\right\},\] which is precisely $L_{0,1}^{P}$.
\end{proof}

Before proving the equivalence of $T_{CS}^{P}$ and $L_{0,1}^{P}$ we require an area computation:

\begin{lemma}\label{arealemma} For $0<\alpha<1$ the area enclosed by the curve $\left\{w\in \mathbb{H}(\sqrt{2})\left| |w^2+2-|w|^2|^2=4(1-\alpha^2)\right.\right\}$ is $\pi(1-\alpha)$.
\end{lemma}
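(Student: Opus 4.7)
The plan is to recognize the curve as the $\psi$-preimage of a line of latitude on $S^2$, where $\psi$ is the symplectomorphism from the proof of Proposition \ref{CStoEP}; the lemma will then reduce to the standard formula for the area of a spherical cap.

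First I would write $w=x+iy$ with $y>0$ and compute directly
\[ w^2+2-|w|^2 \;=\; 2(1-y^2)+2ixy, \]
so that $|w^2+2-|w|^2|^2 = 4\bigl[(1-y^2)^2+x^2y^2\bigr]$. A brief algebraic manipulation (expanding $(1-y^2)^2$ and cancelling the $1$ on each side) rewrites the defining equation as
\[ y^2(2-x^2-y^2)\;=\;\alpha^2. \]
Setting $F(w):=y^2(2-|w|^2)$, one checks that $F$ vanishes on $\partial\mathbb{H}(\sqrt{2})$ and attains its unique interior maximum $F(i)=1$. Hence for $0<\alpha<1$ the curve is a simple closed curve in $\mathbb{H}(\sqrt{2})$ and its enclosed region is precisely $\{F>\alpha^2\}$.

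Next I would invoke the symplectomorphism
\[ \psi(x+iy)\;=\;\bigl(1-x^2-y^2,\;x\sqrt{2-x^2-y^2},\;y\sqrt{2-x^2-y^2}\bigr) \]
from $(B^2(\sqrt{2}),\omega_{\C^2})$ to $(S^2\setminus\{-e_1\},\tfrac12\omega_{\mathrm{std}})$ used in the proof of Proposition \ref{CStoEP}, which carries $\mathbb{H}(\sqrt{2})$ bijectively onto the open hemisphere $\{v\in S^2:v_3>0\}$. The third coordinate of $\psi(w)$ is $v_3=y\sqrt{2-|w|^2}$, so $v_3^2=F(w)$ and (using $y>0$) our curve is exactly $\psi^{-1}\bigl(\{v\in S^2:v_3=\alpha\}\bigr)$, with enclosed region $\psi^{-1}\bigl(\{v_3>\alpha\}\bigr)$.

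Since $\psi$ is a symplectomorphism, the enclosed area equals the $\tfrac12\omega_{\mathrm{std}}$-area of the spherical cap $\{v\in S^2:v_3>\alpha\}$, which is $\pi(1-\alpha)$ by the standard spherical-cap formula (the cap has $\omega_{\mathrm{std}}$-area $2\pi(1-\alpha)$). There is no real obstacle in this argument; the only genuine calculation is the elementary identity in the first paragraph, after which the identification of the curve with a line of latitude on $S^2$ is immediate from the explicit form of $\psi$.
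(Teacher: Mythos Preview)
Your argument is correct. The paper takes a different route: it constructs two auxiliary isosymplectic embeddings $h_1\co D(\sqrt{2})\to \C P^1(\sqrt{2})$ and $h_2\co \mathbb{H}(\sqrt{2})\to \C P^1(\sqrt{2})$, both landing in the locus $\{Im(z_0\bar{z}_1)>0\}$, and then observes that the composition $h_{2}^{-1}\circ h_1|_{D(1)}\co D(1)\to \mathbb{H}(\sqrt{2})$ is a symplectomorphism which, by a direct comparison of the pulled-back function $|z_0^2+z_1^2|^2$, sends the Euclidean disk of radius $r$ to the region $\{|w^2+2-|w|^2|^2<4r^2(2-r^2)\}$; solving $r^2(2-r^2)=1-\alpha^2$ with $r<1$ gives $r^2=1-\alpha$ and hence area $\pi(1-\alpha)$. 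Your approach is more economical: rather than building new maps, you reuse the symplectomorphism $\psi$ from the proof of Proposition~\ref{CStoEP}, and the algebraic identity $|w^2+2-|w|^2|^2=4\bigl(1-v_3^2\bigr)$ (where $v_3$ is the third coordinate of $\psi(w)$) identifies the curve with a line of latitude, reducing the computation to Archimedes' spherical-cap formula. The paper's version is self-contained within the lemma, while yours ties the result more visibly to the $S^2$-geometry already present in the section; both are short and the underlying mechanism---transporting the curve by a symplectomorphism to a region whose area is obvious---is the same.
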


\begin{proof} Where $D(\sqrt{2})$ is the open disk in $\C$ of radius $\sqrt{2}$ define $\tilde{h}_1\co D(\sqrt{2})\to \C^2$ by \[ \tilde{h}_1(\zeta) = \left( i\left(\sqrt{1-|\zeta|^2/2}+\zeta/\sqrt{2}\right) , \sqrt{1-|\zeta|^2/2}-\zeta/\sqrt{2}\right) \]  It is easy to check that $\tilde{h}_1$ has image contained in the sphere of radius $\sqrt{2}$ and pulls back the form $-y_1dx_1+x_2dy_2$ to a primitive for the standard symplectic form on $D(\sqrt{2})\subset \C$.  Hence $\tilde{h}_1$ descends to an isosymplectic map $h_1\co D(\sqrt{2})\to \C P^1(\sqrt{2})$ defined by 
\[ h_1(\zeta) = \left[ i\left(\sqrt{1-|\zeta|^2/2}+\zeta/\sqrt{2}\right) : \sqrt{1-|\zeta|^2/2}-\zeta/\sqrt{2}\right] \]
which moreover is easily seen to be an embedding (for instance  composing it with the unitary transformation $\frac{1}{\sqrt{2}}\left(\begin{array}{cc} i & 1 \\ -i & 1\end{array}\right)$ yields a map which is obviously injective).
$h_1$ pulls back the function $[z_0:z_1]\mapsto |z_{0}^{2}+z_{1}^{2}|^2$ to $\zeta\mapsto 4|\zeta|^2(2-|\zeta|^2)$, and it pulls back the function $[z_0:z_1]\mapsto Im(z_0\bar{z}_1)$ to the function $\zeta\mapsto 1-|\zeta|^2$.  Thus $h_1$ restricts to the unit disk $D(1)$ as a symplectomorphism to the subset $\{[z_0:z_1]\in \C P^1(\sqrt{2}):Im(z_0\bar{z}_1)>0\}$.

Meanwhile another symplectomorphism whose image is this latter subset is given by the map $h_2\co \mathbb{H}(\sqrt{2})\to \{[z_0:z_1]\in \C P^1(\sqrt{2}):Im(z_0\bar{z}_1)>0\}$ defined by $h_2(w)=[w:\sqrt{2-|w|^2}]$.  The map $h_2$ pulls back the function 
$[z_0:z_1]\mapsto |z_{0}^{2}+z_{1}^{2}|^2$ to $w\mapsto |w^2+2-|w|^2|^2$.  

We thus obtain a symplectomorphism $h_{2}^{-1}\circ h_1\co D(1)\to \mathbb{H}(\sqrt{2})$ which sends the disk of any given radius $r<1$ to the region \[ \{w\in \mathbb{H}(\sqrt{2})\left|\thinspace |w^2+2-|w|^2|^2<4r^2(2-r^2)\right.\}  \]  Thus the region in the statement of the lemma has the same area as the disk of radius equal to the number $r\in (0,1)$ obeying $4r^2(2-r^2)=4(1-\alpha^2)$.  But this value of $r$ is precisely $r=\sqrt{1-\alpha}$ and so the lemma follows. 
\end{proof}  

\begin{prop}\label{CStoL01P}
There is a symplectomorphism $\C P^2(\sqrt{2})\to \C P^2(\sqrt{2})$ taking $T_{CS}^P$ to $L_{0,1}^P$.
\end{prop}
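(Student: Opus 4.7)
The strategy mirrors that of Proposition~\ref{CStoEP}: we exhibit both $L_{0,1}^P$ and $T_{CS}^P$ as orbits of curves under two Hamiltonian $S^1$-actions on $\C P^2(\sqrt{2})$ which are conjugate via a unitary transformation, and then reduce the problem to an area computation in a disk analogous to Lemma~\ref{arealemma}. First, using Remark~\ref{pullbacks}, the function $G_{12}([z_0:z_1:z_2]) = \mathrm{Im}(\bar{z}_1 z_2)$ generates the ``real rotation'' action
\[ \rho_{EP}^P(t)\cdot[z_0:z_1:z_2] = [z_0 : \cos(t)z_1 - \sin(t)z_2 : \sin(t)z_1 + \cos(t)z_2], \]
and since $L_{0,1}^P = H^{-1}(1/6) \cap G_{12}^{-1}(0)$ with $H$ and $G_{12}$ Poisson-commuting, $L_{0,1}^P$ is invariant under $\rho_{EP}^P$. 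Meanwhile, the standard symplectic embedding $B^4(\sqrt{2}) \hookrightarrow \C P^2(\sqrt{2})$, $(w_1,w_2)\mapsto [\sqrt{2-|w_1|^2-|w_2|^2}:w_1:w_2]$, intertwines the Chekanov-Schlenk action with the ``complex rotation'' action $\hat{\rho}_{CS}^P(t)\cdot[z_0:z_1:z_2] = [z_0:e^{it}z_1:e^{-it}z_2]$, so $T_{CS}^P$ is the orbit of the image of $\Delta_{\Gamma_P}$ under $\hat{\rho}_{CS}^P$.

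Next, let $U\in U(3)$ be the unitary transformation $U(z_0,z_1,z_2)=(z_0,(z_1+iz_2)/\sqrt{2},(z_1-iz_2)/\sqrt{2})$; a direct check shows $U\rho_{EP}^P(t)U^{-1} = \hat{\rho}_{CS}^P(t)$, and $U$ induces a Hamiltonian diffeomorphism of $\C P^2(\sqrt{2})$. Using the inverse change of variables $z_0=w_0$, $z_1=(w_1+w_2)/\sqrt{2}$, $z_2=i(w_2-w_1)/\sqrt{2}$, one computes $\sum_j z_j^2 = w_0^2 + 2w_1w_2$ and $\mathrm{Im}(\bar{z}_1 z_2) = (|w_2|^2-|w_1|^2)/2$, so that
\[ U(L_{0,1}^P) = \left\{[w_0:w_1:w_2]\in\C P^2(\sqrt{2})\ \bigl|\ |w_0^2+2w_1w_2|=\tfrac{4\sqrt{2}}{3},\ |w_1|=|w_2|\right\}. \]

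Now $U(L_{0,1}^P)$ is invariant under $\hat{\rho}_{CS}^P$, and its $|w_1|=|w_2|$ condition places it in the image of the diagonal slice of $B^4(\sqrt{2})$ under the ball embedding. Parameterizing this slice by $z\in \mathbb{H}(1)$ via $(w_1,w_2) = (e^{it}z,e^{-it}z)$, the remaining equation $|w_0^2+2w_1w_2|=\tfrac{4\sqrt{2}}{3}$ with $w_0=\sqrt{2-2|z|^2}$ cuts out a curve $\Gamma_P''\subset\mathbb{H}(1)$, so that $U(L_{0,1}^P)$ is the $\hat{\rho}_{CS}^P$-orbit of $\Delta_{\Gamma_P''}$. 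Using $1-\cos(2\arg z) = 2\sin^2(\arg z) = 2(\mathrm{Im}\,z)^2/|z|^2$, the defining equation simplifies to $(\mathrm{Im}\,z)^2(1-|z|^2) = 1/36$.

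The remaining and main obstacle is to show that $\Gamma_P''$ encloses area $\pi/3$ in $\mathbb{H}(1)$; once this is done, the Hamiltonian-isotopy invariance of $T_{CS}^P$ under variations of $\Gamma_P$ (enclosing a fixed area in $\mathbb{H}(1)$) lets us take $\Gamma_P = \Gamma_P''$, giving $U(L_{0,1}^P) = T_{CS}^P$ and hence $U^{-1}$ as the desired symplectomorphism. The area computation will proceed in the spirit of Lemma~\ref{arealemma}: one constructs an explicit area-preserving embedding of a subset of $\C P^1(\sqrt{2})$ (corresponding to the diagonal in $\C P^2(\sqrt{2})$ cut out by $|w_1|=|w_2|$) into $\mathbb{H}(1)$, under which $\Gamma_P''$ is the image of a round disk whose radius is determined by the constant $4\sqrt{2}/3$, and whose area turns out to be $\pi/3$.
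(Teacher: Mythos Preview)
Your approach is correct and is essentially the same as the paper's proof. Both arguments conjugate the ``real rotation'' action $\rho_{EP}^P$ to the ``complex rotation'' action $\hat{\rho}_{CS}^P$ by a unitary transformation acting on the $(z_1,z_2)$ coordinates (the paper uses $\mathcal{Q}=\frac{1}{\sqrt{2}}\left(\begin{smallmatrix}1 & -i\\ 1 & i\end{smallmatrix}\right)$ at the level of $B^4(\sqrt{2})$ and then conjugates by the ball embedding $\psi_P$; your $U\in U(3)$ is the block-diagonal extension of essentially the same $2\times 2$ matrix), and both reduce the problem to a single area computation.

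The only place where you leave more work than necessary is the final area step. You do not need to redo the argument of Lemma~\ref{arealemma}: under the rescaling $w=z\sqrt{2}$ your curve $\Gamma_P''=\{z\in\mathbb{H}(1):(\mathrm{Im}\,z)^2(1-|z|^2)=1/36\}$ becomes exactly $\{w\in\mathbb{H}(\sqrt{2}):|w^2+2-|w|^2|^2=32/9\}$, which is the curve of Lemma~\ref{arealemma} with $\alpha=1/3$. That lemma gives area $\pi(1-\alpha)=\frac{2\pi}{3}$ in $\mathbb{H}(\sqrt{2})$, and the scaling divides areas by $2$, yielding the required $\pi/3$ for $\Gamma_P''$. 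This is precisely how the paper closes the argument (phrased as: $C_{0,1}\times\{0\}$ encloses $2\pi/3$, hence $\tilde{C}$ in the diagonal encloses $\pi/3$).
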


\begin{proof}
Recall that $T_{CS}^P$ is defined as $\psi_P\left(\Theta_{CS}^P\right)$, where
\begin{align*}
\psi_P:B^4(\sqrt{2})&\to \C P^2(\sqrt{2}) \\
(z_1,z_2)&\mapsto [\sqrt{2-|z_1|^2 - |z_2|^2}:z_1:z_2]
\end{align*}
is a symplectomorphism (as a routine computation shows) and \[ \Theta_{CS}^P = \{(e^{it}z,e^{-it}z)\mid z\in \Gamma_P, t\in [0,2\pi] \} \subset B^2(1)\times B^2(1) \subset B^4(\sqrt{2}) \] for a curve $\Gamma_P\subset \mathbb{H}(1)$ enclosing area $\frac{\pi}{3}$ (the Hamiltonian isotopy class of $T_{CS}^{P}$ is easily seen to be independent of the particular choice of $\Gamma_P$).  Alternatively, $\Theta_{CS}^P$ is given as the orbit of the curve $\Delta_{\Gamma_P}=\{(z,z)\mid z\in \Gamma_P\}$ under the circle action $\rho$ given by \[ \rho(e^{it})\cdot (z_1,z_2) = \begin{pmatrix} e^{it} & 0 \\ 0 & e^{-it} \end{pmatrix} \begin{pmatrix} z_1 \\ z_2 \end{pmatrix}. \] 

We now consider points $[z_0:z_1:z_2]$ of $L_{0,1}^{P}$.  Since such points have $|\sum z_{j}^{2}|=4\sqrt{2}/3<2$, not all of $z_0,z_1,z_2$ can be real multiples of each other.  On the other hand since $Im(\bar{z}_1z_2)=0$, $z_1$ and $z_2$ \emph{are} real multiples of each other.  So $z_0$ must be nonzero, and by modifying $(z_0,z_1,z_2)$ in its fiber under the Hopf map we may assume that $z_0>0$, and then we can uniquely write $(z_1,z_2)=(w\cos t,w\sin t)$ where $w\in \mathbb{H}(\sqrt{2})$ and $(\cos t,\sin t)\in S^1$.  We will then have $z_0=\sqrt{2-|w|^2}$, and the fact that $|\sum z_{j}^{2}|=4\sqrt{2}/3$ amounts to the statement that $\left|w^2+2-|w|^2\right|^2=32/9$.

Thus, where \[ C_P = \left\{[\sqrt{2-|w|^2}:w:0]\left|\thinspace w\in \mathbb{H}(\sqrt{2}), \left|w^2+2-|w|^2\right|^2=\frac{32}{9} \right.\right\}, \] $L_{0,1}^P$ is the orbit of the curve $C_P$ under the circle action \[ e^{it}\cdot[z_0:z_1:z_2] = [z_0:z_1 \cos t + z_2 \sin t: z_1(-\sin t) + z_2 \cos t]. \]  Moreover, $L_{0,1}^P$ is contained in the image of $\psi_P$, and we observe that $\psi_P^{-1}(L_{0,1}^P)$ is the orbit of the curve $C_{0,1} \times \{0\} \subset B^4(\sqrt{2})$, where \[ C_{0,1}=\left\{w\in \mathbb{H}(\sqrt{2}) \left|\thinspace \left|w^2+2-|w|^2\right|^2=\frac{32}{9} \right.\right\}, \] under the    circle action $\rho_{0,1}$ given by \[ \rho_{0,1}(e^{it})\cdot (z_1,z_2) = \begin{pmatrix} \cos t & \sin t \\ -\sin t & \cos t \end{pmatrix} \begin{pmatrix} z_1 \\ z_2 \end{pmatrix}. \]

We then observe that the circle actions $\rho$ and $\rho_{0,1}$ are conjugate in $U(2)$ since \[ \begin{pmatrix} \cos t & \sin t \\ -\sin t &  \cos t \end{pmatrix} = \mathcal{Q}^{-1} \begin{pmatrix} e^{it} & 0 \\ 0 &  e^{-it} \end{pmatrix} \mathcal{Q} \quad\mbox{for}\quad \mathcal{Q} = \begin{pmatrix} \frac{1}{\sqrt{2}} & \frac{-i}{\sqrt{2}} \\ \frac{1}{\sqrt{2}} & \frac{i}{\sqrt{2}} \end{pmatrix}\in U(2). \]  Hence, it follows that 
\begin{equation}\label{cp2conjugacyeqn}
\mathcal{Q}\left( \rho_{0,1}(e^{it}) \cdot \begin{pmatrix} z_1 \\ z_2 \end{pmatrix} \right) =  \rho(e^{it})\cdot \left( \mathcal{Q} \begin{pmatrix} z_1 \\ z_2 \end{pmatrix} \right).
\end{equation}
 Noting that $\mathcal{Q} \begin{pmatrix} z_1 \\ 0 \end{pmatrix} = \frac{1}{\sqrt{2}} \begin{pmatrix} z_1 \\ z_1 \end{pmatrix}$, we observe that $\mathcal{Q}\left(C_{0,1} \times \{0\}\right)$ is a curve $\Delta_{\tilde{C}}=\{(z,z)\mid z\in \tilde{C}\}$ in the diagonal of $B^2(1)\times B^2(1)$, where $\tilde{C}\subset \mathbb{H}(1)$ is a curve that we claim encloses an area of $\frac{\pi}{3}$.  Indeed, since $\mathcal{Q}$ is a symplectomorphism, the area in the diagonal enclosed by $\Delta_{\tilde{C}}$ is equal to that enclosed by $C_{0,1}\times \{0\}$.  According to Lemma \ref{arealemma} with $\alpha=1/3$, the area enclosed by $C_{0,1}\times \{0\}$ and hence by $\Delta_{\tilde{C}}$ is $\frac{2\pi}{3}$; and since the symplectic form on the diagonal is twice the standard symplectic form on $\C$, it follows that $\tilde{C}$ encloses an area of $\frac{\pi}{3}$ as claimed.  Taking the curve $\Gamma_P$ in Chekanov and Schlenk's construction to be the curve $\tilde{C}$, it follows from \eqref{cp2conjugacyeqn} that $\Theta_{CS}^P = \mathcal{Q}\left( \psi_P^{-1}\left(L_{0,1}^P\right)\right)$.  Finally, it follows that $T_{CS}^P$ is the image of $L_{0,1}^P$ under the symplectomorphism which restricts to the (dense) image of $\psi_P$ as $\psi_P\circ\mathcal{Q}\circ \psi_{P}^{-1}$.
\end{proof}

\section{The Biran circle bundle construction}\label{sectionBiran}

We recall the general description of standard symplectic disk bundles from \cite{B}, which we present with a few minor modifications pertaining to normalization.  Let $(\Sigma,\omega_{\Sigma})$ be a symplectic manifold,  let $\pi_P\co P\to \Sigma$ be a principal $S^1$-bundle with Chern class $[\omega_{\Sigma}]/\tau$ for some $\tau>0$, and let $\beta\in \Omega^1(P)$ be a connection $1$-form on $P$ with curvature equal to $2\pi i\omega_{\Sigma}/\tau$, normalized so that $\beta$ evaluates on the vector field generating the $S^1$-action as $\frac{1}{2\pi}$.  Thus we will have $-\tau d\beta = \pi_{P}^{*}(\omega_{\Sigma})$.  

The \emph{standard symplectic disk bundle} to $\Sigma$ associated to the pair $(P,\beta)$ is the symplectic manifold $(\mathcal{D}_{\tau}(P),\Omega)$ defined as follows.  The smooth manifold $\mathcal{D}_{\tau}(P)$ is the quotient \[ \mathcal{D}_{\tau}(P)=\frac{P\times D(\sqrt{\tau/\pi})}{(e^{i\theta}\cdot w,\zeta)\sim (w,e^{i\theta}\zeta)} \] where $D(\sqrt{\tau/\pi})$ denotes the disk of radius $\sqrt{\tau/\pi}$ in $\C$, and the symplectic form $\Omega$ is defined by the property that, where $pr\co P\times D(\sqrt{\tau/\pi})\to \mathcal{D}_{\tau}(P)$ is the quotient projection and where $\omega_{\C}=\frac{i}{2}d\zeta\wedge d\bar{\zeta}$ is the standard symplectic form on $\C$, \[ pr^*\Omega = d\left((\pi|\zeta|^2-\tau)\beta\right)+\omega_{\C} \]  Thus $\Sigma$ naturally embeds into $\mathcal{D}_{\tau}(P)$ as the ``zero section'' $\{[(w,0)]|w\in P\}$ with $\Omega|_{\Sigma}=\omega_{\Sigma}$, and the projection $\pi\co [(w,\zeta)]\mapsto[(w,0)]$ gives $\mathcal{D}_{\tau}(P)$ the structure of a fiber bundle whose fibers are symplectic disks of area $\tau$.
If $\Lambda\subset \Sigma$ is any monotone Lagrangian submanifold, then for any $r\in (0,\sqrt{\tau/\pi})$ the submanifold \[ \Lambda_{(r)} = \left\{[(w,\zeta)]\in \mathcal{D}_{\tau}(P)\left|\thinspace |\zeta|=r,\,\pi([(w,\zeta)])\in\Lambda\right.\right\} \] will be a monotone Lagrangian submanifold of $\mathcal{D}_{\tau}(P)$.

The main theorem of \cite{B} asserts that, if $(\Sigma,\omega_{\Sigma})$ is a complex hypersurface of a K\"ahler manifold $(M,\omega)$ which is Poincar\'e dual to the cohomology class $\frac{1}{\tau}[\omega]$, then $(\mathcal{D}_{\tau}(P),\Omega)$ symplectically embeds into $M$ as the complement of an isotropic CW complex. There will typically (always, if $\dim \Sigma\geq 4$; see \cite[Proposition 6.4.1]{BC09}) be a unique value of $r$ with the property that the embedding maps $\Lambda_{(r)}$ to a monotone Lagrangian submanifold $L$ of $M$; $L$ is then called the \emph{Biran circle bundle construction} associated to $\Lambda$.

Consistently with this, we will now give explicit such embeddings for three special cases: where $\Sigma$ is the diagonal $\Delta$ in $M=S^2\times S^2$; where $\Sigma$ is the quadric $Q_{n}(\sqrt{2})$ and $M=Q_{n+1}(\sqrt{2})$; and where $\Sigma$ is $Q_{n}(\sqrt{2})$ and $M=\C P^n(\sqrt{2})$.  (Under a suitable symplectomorphism the first of these cases can be viewed as the special case of the second where $n=2$, but we will handle it separately in order to make the proof of the last equivalence in Theorem \ref{mains2s2} clearer.)

\subsection{$S^2\times S^2$}

To begin with the case of $\Delta\subset S^2\times S^2$, we identify $\Delta$ with $S^2$ in the obvious way and we use for the principal $S^1$-bundle $P_{\Delta}$ the unit circle bundle in the tangent bundle to $S^2$, which may be identified as the following subset of $\R^3\times \R^3$: \[ P_{\Delta}=\left\{(x,y)\in \R^3\times\R^3\left|\thinspace |x|=|y|=1,\,x\cdot y=0\right.\right\}\] with bundle projection given by $(x,y)\mapsto x$ and circle action given by $e^{it}\cdot(x,y) = (x,(\cos t)y+(\sin t)x\times y)$.  The $1$-form $\beta\in \Omega^1(P_{\Delta})$ defined by $\beta_{(x,y)}(a,b) = b\cdot(x\times y)$ is then a connection $1$-form for $P_{\Delta}$.  A routine computation shows that, for $(a_1,b_1),(a_2,b_2)\in T_{(x,y)}P_{\Delta}$, one has \[ d\beta_{(x,y)}((a_1,b_1),(a_2,b_2)) = \frac{1}{2\pi}\left(2x\cdot(b_1\times b_2)-y\cdot\left(a_1\times b_2 + b_1\times a_2\right)   \right) \] and that moreover $d\beta$ is $-\frac{1}{2\pi}$ times the pullback of the standard symplectic form $(\omega_{\mbox{\scriptsize std}})_x(a_1,a_2)=x\cdot(a_1\times a_2)$ on $S^2$.  (Note that since our convention is to take a symplectic form $\Omega_{S^2\times S^2}$ on $S^2\times S^2$ giving area $2\pi$ to each factor, the restriction $\Omega_{S^2\times S^2}\big|_{\Delta}$ will coincide with $\omega_{\mbox{\scriptsize std}}$ under the obvious identification of $\Delta$ with $S^2$.)  

Thus \cite{B} implies the existence of a symplectomorphism from $\mathcal{D}_{2\pi}(P_{\Delta}) = \frac{P_{\Delta}\times D(\sqrt{2})}{S^1}$ to the complement of an isotropic CW complex in $S^2\times S^2$; we will now explicitly construct such a symplectomorphism.  Now the map $[(x,y),re^{i\theta}]\mapsto (x,(r\cos\theta)y+(r\sin\theta)x\times y)$ gives a diffeomorphism of $\mathcal{D}_{2\pi}(P_{\Delta})$ to the radius-$\sqrt{2}$ disk bundle \[ D_{\sqrt{2}}S^2 = \left\{(x,y)\in \R^3\times \R^3\left||x|=1,\,|y|<\sqrt{2},\,x\cdot y=0\right.\right\} \] in $TS^2$, and in terms of this diffeomorphism the Biran symplectic form $\Omega$  restricts to the complement of the zero section as the exterior derivative of the form $\eta\in \Omega^1(D_{\sqrt{2}}S^2\setminus 0_{S^2})$ given by \[ \eta_{(x,y)}(a,b)  = \left(\frac{1}{2}-\frac{1}{|y|^2}\right)b\cdot(x\times y). \]

Now define \[ \Theta_{\Delta}\co D_{\sqrt{2}}S^2\to \left( S^2\times S^2 \right)\setminus \overline{\Delta} \] by \[ \Theta_{\Delta}(x,y) = \left(\left(1-\frac{|y|^2}{2}\right)x+\sqrt{1-\frac{|y|^2}{4}}y, \left(1-\frac{|y|^2}{2}\right)x-\sqrt{1-\frac{|y|^2}{4}}y \right) \]
One easily sees (using the identity $|v-w|^2=(2-|v+w|)(2+|v+w|)$ for $(v,w)\in S^2\times S^2$) that the smooth map $\left( S^2\times S^2 \right)\setminus \overline{\Delta}\to D_{\sqrt{2}}S^2$ defined by \[ (v,w)\mapsto \left(\frac{v+w}{|v+w|},\frac{v-w}{\sqrt{2+|v+w|}}\right) \] is an inverse to $\Theta_{\Delta}$, so $\Theta_{\Delta}$ is a diffeomorphism to $\left( S^2\times S^2 \right)\setminus \overline{\Delta}$.  Evidently $\Theta_{\Delta}$ maps the zero section of $D_{\sqrt{2}}S^2$ to $\Delta\subset S^2\times S^2$.  To see that $\Theta_{\Delta}$ is a symplectomorphism we consider its composition (after restricting to the complement of the zero-section) with our previously-defined symplectomorphism $\Phi_2\co \left( S^2\times S^2 \right)\setminus \Delta \to D_{1}^{*}S^2$, given by $\Phi_2(v,w) = \left(\frac{v\times w}{|v-w|},\frac{v-w}{|v-w|}\right)$.  We find that, for $(x,y)\in D_{\sqrt{2}}S^2$ with $y\neq 0$, \[ \Phi_2\circ \Theta_{\Delta}(x,y) = \left( (|y|^2/2-1)\left(x\times\frac{y}{|y|}\right), \frac{y}{|y|}\right).\]  Where $\lambda$ is the canonical one-form on $T^*S^2$ we see that \[ \left((\Phi_2\circ\Theta_{\Delta})^{*}\lambda\right)_{(x,y)}(a,b) =  (|y|^2/2-1)\left(x\times\frac{y}{|y|}\right)\cdot \frac{b}{|y|} = \left(\frac{1}{2}-\frac{1}{|y|^2}\right)b\cdot(x\times y) \] which, as noted earlier, is a primitive for the Biran symplectic form $\Omega$ on the complement of the zero-section in $D_{\sqrt{2}}S^2$.  Since $\Phi_2$ is a symplectomorphism, this implies that $\Theta_\Delta$ is a symplectomorphism on the complement of the zero-section, and hence globally by continuity.

The Biran circle bundle construction $T_{BC}$ over the circle $\Lambda=\{x_1=0\}$ in $\Delta\cong S^2$ will then be the image under $\Theta_{\Delta}$ of the unit circle bundle over $\Lambda$.  (The radius $1$ is the radius necessary to guarantee monotonicity, as in \cite[Proposition 6.4.1]{BC09} after one adjusts for differences in normalization.)  We can now complete the proof of Theorem \ref{mains2s2}, as follows:

\begin{prop}\label{bcep} The tori $T_{BC}$ and $T_{EP}=\{(v,w)\in S^2\times S^2|(v+w)\cdot e_1=0,\,v\cdot w=-1/2\}$ are equal.
\end{prop}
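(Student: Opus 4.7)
The plan is to show $T_{BC}=T_{EP}$ by direct computation, exploiting the very explicit form of the symplectomorphism $\Theta_{\Delta}$ given above. First I would describe the unit circle bundle over $\Lambda=\{x\in S^2\mid x_1=0\}$ inside $D_{\sqrt{2}}S^2$: it is
\[
\{(x,y)\in \R^3\times\R^3\mid |x|=|y|=1,\ x\cdot y=0,\ x_1=0\}.
\]
Evaluating the formula for $\Theta_{\Delta}$ with $|y|=1$, the coefficients collapse to $1-|y|^2/2=1/2$ and $\sqrt{1-|y|^2/4}=\sqrt{3}/2$, so a point of this circle bundle is sent to
\[
(v,w)=\left(\tfrac{1}{2}x+\tfrac{\sqrt{3}}{2}y,\ \tfrac{1}{2}x-\tfrac{\sqrt{3}}{2}y\right).
\]

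Next I would verify directly that $(v,w)\in T_{EP}$. Indeed $v+w=x$, which has first coordinate zero by the assumption $x\in\Lambda$; and $v\cdot w = \tfrac{1}{4}|x|^2-\tfrac{3}{4}|y|^2=\tfrac{1}{4}-\tfrac{3}{4}=-\tfrac{1}{2}$ since $|x|=|y|=1$. This gives the inclusion $T_{BC}\subseteq T_{EP}$.

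For the reverse inclusion, given $(v,w)\in T_{EP}$ I would set $x=v+w$ and $y=(v-w)/\sqrt{3}$. The constraint $(v+w)\cdot e_1=0$ gives $x_1=0$, and the computation $|x|^2=|v|^2+2v\cdot w+|w|^2=1-1+1=1$ together with $|v-w|^2=|v|^2-2v\cdot w+|w|^2=3$ shows $|x|=|y|=1$. Orthogonality $x\cdot y=(|v|^2-|w|^2)/\sqrt{3}=0$ is automatic, so $(x,y)$ lies in the unit circle bundle over $\Lambda$. A one-line check shows $\Theta_{\Delta}(x,y)=(v,w)$, which completes the equality.

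There is no real obstacle here: once one unpacks the definition of $T_{BC}$ (the image under $\Theta_{\Delta}$ of the unit circle bundle over the equator in $\Delta$) all the geometric content of the statement is already encoded in the explicit formula for $\Theta_{\Delta}$. The only mild subtlety is to notice that the monotonicity-determined radius is $r=1$, which is precisely what makes the numerical coefficients $1/2$ and $\sqrt{3}/2$ match the numbers $(v+w)\cdot e_1=0$ and $v\cdot w=-1/2$ cutting out $T_{EP}$.
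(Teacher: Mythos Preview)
Your proof is correct and follows essentially the same approach as the paper: both rely entirely on the explicit formula for $\Theta_{\Delta}$ and verify the equality by direct computation. The only organizational difference is that the paper computes $(v+w)\cdot e_1=(2-|y|^2)x_1$ and $v\cdot w=1-2|y|^2+|y|^4/2$ for arbitrary $(x,y)\in D_{\sqrt{2}}S^2$ and then reads off that the preimage of $T_{EP}$ under the diffeomorphism $\Theta_{\Delta}$ is exactly the unit circle bundle over $\Lambda$, whereas you specialize to $|y|=1$ first and then handle the reverse inclusion by explicitly inverting $\Theta_{\Delta}$ on $T_{EP}$.
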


\begin{proof}  We observe that if $(v,w)=\Theta_{\Delta}(x,y)$ for $(x,y)\in D_{\sqrt{2}}S^2$, then \[ (v+w)\cdot e_1 = (2-|y|^2)x_1\] and \[ v\cdot w = \left(1-\frac{|y|^2}{2}\right)^2-\left(1-\frac{|y|^2}{4}\right)|y|^2 = 1-2|y|^2+|y|^4/2 \]  Given that $|y|<\sqrt{2}$, we thus see that $(v,w)\in T_{EP}$ if and only if $x_1=0$ and $|y|=1$, \emph{i.e.} if and only if $(x,y)$ lies in the unit circle bundle over $\Lambda$, \emph{i.e.} if and only if $(v,w)=\Theta_{\Delta}(x,y)\in T_{BC}$.
\end{proof}

\subsection{Quadrics}\label{birquad}

We now discuss the Biran circle bundle construction where the ambient manifold is the quadric \[ Q_{n+1}(\sqrt{2})=\left\{[z_0:\cdots:z_{n+1}]\in \C P^{n+1}(\sqrt{2})\left|\sum z_{j}^{2}=0\right.\right\} \] and the hypersurface $\Sigma$ is $Q_{n}(\sqrt{2})$, identified as the set of points $[z_0:\cdots:z_{n+1}]\in Q_{n+1}(\sqrt{2})$ with $z_0=0$.  

There is a principal $S^1$-bundle $P_{Q}\to Q_n(\sqrt{2})$ given by \[ P_Q=\{w=u+iv\in \C^{n+1}|\|u\|=\|v\|=1,\,u\cdot v=0\},\] with bundle projection $w\mapsto[w]$.  We use the opposite of the complex $S^1$-action on $P_Q$  (of course $P_Q\subset S^{2n+1}(\sqrt{2})$ is just the preimage of $Q_n(\sqrt{2})$ under the Hopf projection, and the projection $P_Q\to Q_{n}(\sqrt{2})$ is the Hopf map).  $\beta=\frac{1}{4\pi}\sum(v_jdu_j-u_jdv_j)$ gives a connection form on $P_Q$.  We set $\tau=2\pi$ and form $\mathcal{D}^{n}_{2\pi}(P_Q)$ as a quotient of $P_Q\times D(\sqrt{2})$ just as earlier, with symplectic form that pulls back to $P_Q\times D(\sqrt{2})$ as $d((\pi|\zeta|^2-2\pi)\beta)+\frac{i}{2}d\zeta\wedge d\bar{\zeta}$.  Then where \[ \mathbb{S}_{0,n} =\{[1:ix_1:\cdots:ix_{n+1}] | x_i\in \R,\,\sum x_{j}^{2}=1\}\] we define the map \[ \Theta_Q\co \mathcal{D}^{n}_{2\pi}(P_Q)\to Q_{n+1}(a)\setminus \mathbb{S}_{0,n} \] by \[ \Theta_Q([w,\zeta])=\left[\sqrt{1-\frac{|\zeta|^2}{4}}\zeta: \left(1-\frac{|\zeta|^2}{4}\right)w-\frac{\zeta^2\bar{w}}{4}\right] \] and claim that $\Theta_Q$ is a symplectomorphism.    To see this, note that for $r\geq 0$ we have \[ \Theta_Q([u+iv,r])=[r\sqrt{1-r^2/4}:(1-r^2/2)u+iv]. \]  Now the map $\alpha\co P_Q\times (0,\sqrt{2})\to \mathcal{D}_{2\pi}^{n}(P_Q)$ defined by $(u+iv,r)\mapsto [u+iv,r]$ is a diffeomorphism to the complement of the zero-section in  
$\mathcal{D}_{2\pi}^{n}(P_Q)$, and the pullback of the symplectic form on $\mathcal{D}_{2\pi}^{n}(P_Q)$ by $\alpha$ is $d\left(\left(\frac{r^2}{4}-\frac{1}{2}\right)\sum(v_jdu_j-u_jdv_j)\right)$.   Meanwhile $\Theta_Q\circ\alpha$ lifts in obvious fashion to a map $P_Q\times (0,\sqrt{2})\to \C^{n+2}$ with image in the sphere of radius $\sqrt{2}$, which pulls back the primitive $\frac{1}{2}\sum (x_jdy_j-y_jdx_j)$ for the standard symplectic form on $\C^{n+2}$ precisely to $\left(\frac{r^2}{4}-\frac{1}{2}\right)\sum(v_jdu_j-u_jdv_j)$ (bearing in mind that $\sum u_jv_j=0$).  Thus $\Theta_Q$ pulls back the Fubini-Study symplectic form on $Q_{n+1}(\sqrt{2})$ to the Biran form $\Omega$ on $\mathcal{D}_{2\pi}^{n}(P_Q)$, as claimed.

Consequently, for any $k,m$ with $k+m=n-1$, we may construct a monotone Lagrangian submanifold $(\mathbb{S}_{k,m})^Q$ as the image under $\Theta_Q$ of an appropriate-radius circle bundle in $\mathcal{D}_{2\pi}^{n}(P_Q)$ over $\mathbb{S}_{k,m} = \{[ix:y]|x\in S^{k},\,y\in S^m\}$; these will be considered further in Section \ref{quadsubs}.

\subsection{Complex projective space}\label{bircpn}

Now define a new principal $S^1$-bundle $P_P\to Q_n(\sqrt{2})$ by \[ P_P=\frac{P_Q}{w\sim -w}, \] with $S^1$-action given by $e^{it}[w]=[e^{-it/2}w]$.  The one-form $\beta$ on $P_P$ defined by the property that its pullback to $P_Q$ is given by $\frac{1}{2\pi}\sum (v_jdu_j-u_jdv_j)$ (writing $w=u+iv$)   is a connection form on $P_P$, and so we set $\tau=\pi$ and form $\mathcal{D}^{n}_{\pi}(P_P)$ as a quotient of $P_P\times D(1)$ with symplectic form which pulls back by the map $\alpha_P\co P_P\times (0,1)\to \mathcal{D}^{n}_{\pi}(P_P)$ given by $([w],r)\mapsto [([w],r)]$ as $d\left(\pi(r^2-1)\beta\right)$.  Now define \[ \Theta_P\co \mathcal{D}^{n}_{\pi}(P_P)\to \C P^{n}(\sqrt{2})\setminus \R P^n \] by \[ \Theta_P([([w],\zeta)]) = \left[\sqrt{1-|\zeta|^2/2}w -\zeta\bar{w}/\sqrt{2}\right] \]  For $u,v\in \R^{n+1}$ with $|u|=|v|=1$ and $u\cdot v = 0$ we have \[ \Theta_P\circ\alpha_P([u+iv],r) = \left[\left(\sqrt{1-r^2/2}-r/\sqrt{2}\right)u+i\left(\sqrt{1-r^2/2}+r/\sqrt{2}\right)  v \right], 
\] which locally lifts to a map to $S^{2n+1}$ that pulls back the primitive $\frac{1}{2}\sum (x_jdy_j-y_jdx_j)$ to $\pi(r^2-1)\beta$.  As in the case of $\Theta_Q$, this implies that $\Theta_P$ is a symplectomorphism to $\mathbb{C}P^n(\sqrt{2})\setminus \R P^n$.

We may thus use $\Theta_P$ to apply the Biran circle bundle construction to the submanifolds $\mathbb{S}_{k,m}\subset Q_n(\sqrt{2})$ for $k+m=n-1$, yielding monotone Lagrangian submanifolds $(\mathbb{S}_{k,m})^{P}\subset \C P^{k+m+1}(\sqrt{2})$ that will be discussed further in Section \ref{cpnsubs}.  In particular, $(\mathbb{S}_{0,1})^{P}\subset \C P^2(\sqrt{2})$ is the submanifold denoted $T_{BC}^{P}$ in Theorem \ref{maincp2}.

\subsection{A general criterion for displaceability}\label{criterion}

One of our main results, Theorem \ref{maindisp}, asserts that certain of the monotone Lagrangian submanifolds $(\mathbb{S}_{k,m})^Q$ and $(\mathbb{S}_{k,m})^P$ are displaceable.  We will prove this in Sections \ref{quadsubs} and \ref{cpnsubs} based on explicit parametrizations of these specific submanifolds, but we would like to sketch here a more general context in terms of which the displaceability of these submanifolds can be understood.  

We consider a monotone K\"ahler manifold $(M,\omega,J)$, and assume that \begin{equation}\label{omalpha} [\omega]|_{\pi_2(M)}=\alpha c_1(TM)|_{\pi_2(M)}\neq 0 \qquad (\alpha>0) \end{equation} and that $\Sigma\subset M$ is a smooth complex hypersurface as in Biran's construction, with \[ [\omega] = \tau PD([\Sigma]) \qquad (\tau>0) \] where $PD$ denotes Poincar\'e duality, so that the complement of an isotropic CW complex in $M$ is symplectomorphic to a standard symplectic disk bundle $\mathcal{D}_{\tau}(P)\to \Sigma$ constructed from a principal $S^1$-bundle $\pi_P\co P\to \Sigma$ with connection form $\beta\in \Omega^1(P)$ obeying $-\tau d\beta = \pi_{P}^{*}\omega|_{\Sigma}$.  

Also let $\Lambda\subset \Sigma$ be a monotone Lagrangian submanifold, giving rise to Lagrangian submanifolds $\Lambda_{(r)}\subset M$ for $0<r<\sqrt{\tau/\pi}$ by taking the image of the restriction of the radius-$r$ circle bundle to $\Lambda$ under the inclusion of $\mathcal{D}_{\tau}(P)$ into $M$.  It is easy to see what the value of $r$ must be in order for $\Lambda_{(r)}\subset M$ to be monotone: in view of (\ref{omalpha}) we must have $I_{\omega}=\frac{\alpha}{2}I_{\mu}$ on $\pi_2(M,\Lambda_{(r)})$, while a radius-$r$ disk fiber of $\mathcal{D}_{\tau}(P)|_{\Lambda}$ yields a class in $\pi_2(M,\Lambda_{(r)})$ with area $\pi r^2$ and Maslov index $2$, so \[ r^2=\frac{\alpha}{\pi} \]
Conversely, if $\dim M\geq 6$ then \cite[Proposition 6.4.1]{BC09} implies that $\Lambda_{(\sqrt{\alpha/\pi})}\subset M$ will be monotone.

Now the fact that $\Lambda\subset \Sigma$ is Lagrangian implies that the connection given by $\beta$ on $P|_{\Lambda}$ is flat.  In certain circumstances, we can make the stronger statement that the connection given by $\beta$ on $P|_{\Lambda}$ is trivial (\emph{i.e.}, that there is a trivialization of $P|_{\Lambda}$ taking $\beta$ to the trivial connection form $\frac{1}{2\pi}d\theta$ on $\Lambda\times S^1$).  Specifically let us \[ \mbox{assume that }\partial\co \pi_2(\Sigma,\Lambda;\Z)\to \pi_1(\Lambda;\Z) \mbox{ is surjective} \] which of course is equivalent to the inclusion-induced map $\pi_1(\Lambda;\Z)\to \pi_1(\Sigma;\Z)$ being zero (which automatically holds when, as in all cases considered in this paper, $\Sigma$ is simply-connected).  Now if $\gamma\co S^1\to \Lambda$ is any loop, choose a map $u\co D^2\to \Sigma$ with boundary values $u|_{\partial D^2}=\gamma$.  A routine computation then shows that, since $d\beta=-\frac{1}{\tau}\pi_{P}^{*}\omega|_{\Sigma}$, the holonomy map induced by $\beta$-horizontal translation around $\gamma$ is given by rotation of the fiber by angle $\frac{2\pi}{\tau}\int_{D^2}u^*\omega$.

Thus, if the area homomorphism $I_{\omega}\co \pi_2(\Sigma,\Lambda;\Z)\to \R$ takes values only in $\tau\Z$ then the holonomy of the connection given by $\beta$ is trivial.  (Readers familiar with geometric quantization will recognize this as a Bohr-Sommerfeld condition on the Lagrangian submanifold $\Lambda$.)

In this case, the principal $S^1$-bundle $P|_{\Lambda}$ is trivial: choosing a basepoint $p_0\in P|_{\Lambda}$ we may define a bundle isomorphism $\Lambda\times S^1\to P|_{\Lambda}$ by mapping a pair $(x,e^{i\theta})\in \Lambda\times S^1$ to the horizontal translate of $e^{i\theta}p_0$ along any path in $\Lambda$ from $\pi(p_0)$ to $x$, which is independent of the choice of path by the triviality of the holonomy. This bundle isomorphism pulls back the connection form $\beta$ to the standard connection form $\frac{1}{2\pi}d\theta$ on $\Lambda\times S^1$. We then have a commutative diagram \[ \xymatrix{ 
  \Lambda\times S^1\times D(\sqrt{\tau/\pi})\ar[r] & P\times D(\sqrt{\tau/\pi}) \ar[d]
\\ \Lambda\times\{1\}\times  D(\sqrt{\tau/\pi})\ar[u] \ar[r]^>>>>>>>>>g & \mathcal{D}_{\tau}(P) 
} \]where the left map is the obvious inclusion and the bottom map pulls back the form $\Omega$ on $\mathcal{D}_{\tau}(P)$ to the pullback of the standard symplectic form $\omega_{\C}$ on $D(\sqrt{\tau/\pi})\subset \C$ by the projection $\Lambda\times\{1\}\times  D(\sqrt{\tau/\pi})\to D(\sqrt{\tau/\pi})$.  Consequently for any simple closed curve $C\subset D(\sqrt{\tau/\pi})$ we have a Lagrangian submanifold $g(\Lambda\times\{1\}\times C)\subset \mathcal{D}_{\tau}(P)$, and if the curves $C,C'\subset D(\sqrt{\tau/\pi})$ enclose the same area, an area-preserving isotopy of $D(\sqrt{\tau/\pi})$ mapping $C$ to $C'$ induces an exact Lagrangian isotopy from 
$g(\Lambda\times\{1\}\times C)$ to $g(\Lambda\times\{1\}\times C')$.

The Lagrangian submanifold $\Lambda_{(r)}$ is given by $g(\Lambda\times\{1\}\times C)$ where $C$ is the circle of radius $r$.  In particular if $r^2<\frac{\tau}{2\pi}$, then $C$ may be area-preservingly isotoped off of itself in $D(\sqrt{\tau/\pi})$, leading to an exact Lagrangian isotopy from $\Lambda_{(r)}$ to a Lagrangian submanifold of $\mathcal{D}_{\tau}(P)$ which is disjoint from $\Lambda_{(r)}$.  By \cite[Exercise 6.1.A]{P} this exact Lagrangian isotopy can be extended to a Hamiltonian isotopy of $\mathcal{D}_{\tau}(P)$ (and hence to a Hamiltonian isotopy of $M$), proving that $\Lambda_{(r)}$ is displaceable.

Recall that the special value of $r$ leading to monotonicity of $\Lambda_{(r)}$ in $M$ is given by $r^2=\frac{\alpha}{\pi}$.  Thus the above argument leads to a displaceable monotone Lagrangian submanifold of $M$ when $\alpha<\frac{\tau}{2}$.  We summarize the conclusion of this discussion as follows:

\begin{prop}\label{gendisp} Let $(M,\omega,J)$ be a K\"ahler manifold with $[\omega]|_{\pi_2(M)}=\alpha c_1(TM)|_{\pi_2(M)}\neq 0$ where $\alpha>0$, and let $\Sigma$ be a complex hypersurface of $M$ with $\tau PD([\Sigma]) = [\omega]$ where $\tau>0$.  Suppose that $L\subset M$ is a monotone Lagrangian submanifold obtained by the Biran circle bundle construction from a monotone Lagrangian submanifold $\Lambda\subset \Sigma$ such that the boundary map $\partial\co \pi_2(\Sigma,\Lambda)\to \pi_1(\Lambda)$ is surjective.  Assume moreover that: 
\begin{itemize}
\item $\alpha<\frac{\tau}{2}$
\item The area homomorphism $I_{\omega}\co \pi_2(\Sigma,\Lambda)\to \R$ takes values only in $\tau\Z$.
\end{itemize}
Then $L$ is displaceable.
\end{prop}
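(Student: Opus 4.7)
The plan is to formalize the chain of observations made in the discussion immediately preceding the statement. First, I would pin down the radius $r$ of the circle bundle defining $L=\Lambda_{(r)}$: since a radius-$r$ disk fiber of $\mathcal{D}_{\tau}(P)|_{\Lambda}$ yields a class in $\pi_2(M,L)$ of Maslov index $2$ and $\omega$-area $\pi r^2$, the monotonicity of $L$ combined with the hypothesis $[\omega]|_{\pi_2(M)}=\alpha c_1(TM)|_{\pi_2(M)}$ forces $\pi r^2=\alpha$, i.e.\ $r=\sqrt{\alpha/\pi}$.

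Next I would promote the flatness of $\beta|_{P|_{\Lambda}}$ to outright triviality.  Since $\partial\co \pi_2(\Sigma,\Lambda)\to\pi_1(\Lambda)$ is surjective, every loop $\gamma$ in $\Lambda$ bounds a disk $u\co D^2\to\Sigma$, and the identity $d\beta=-\frac{1}{\tau}\pi_{P}^{*}\omega|_{\Sigma}$ gives that $\beta$-parallel transport around $\gamma$ is rotation of the fiber by $\frac{2\pi}{\tau}\int_{D^2}u^*\omega$.  The Bohr--Sommerfeld hypothesis that $I_{\omega}\co\pi_2(\Sigma,\Lambda)\to\R$ takes values in $\tau\Z$ makes this rotation angle an integer multiple of $2\pi$, so all holonomies are trivial.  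A standard horizontal-translation construction, based at a fixed point $p_0\in P|_{\Lambda}$, then yields a bundle isomorphism $\Lambda\times S^1\cong P|_{\Lambda}$ carrying $\beta$ to the standard flat form $\frac{1}{2\pi}d\theta$ and producing the commutative diagram that appears above the statement.

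With such a trivialization in hand, the induced embedding $g\co \Lambda\times\{1\}\times D(\sqrt{\tau/\pi})\to \mathcal{D}_{\tau}(P)$ pulls $\Omega$ back to the pullback of $\omega_{\C}$ via projection to the disk factor, and $L$ is identified with $g(\Lambda\times\{1\}\times C_r)$ for $C_r$ the circle of radius $r=\sqrt{\alpha/\pi}$.  The hypothesis $\alpha<\tau/2$ rearranges to $\pi r^2<\tau/2$, so $C_r$ encloses strictly less than half the area of $D(\sqrt{\tau/\pi})$ and can therefore be area-preservingly isotoped off of itself by an isotopy supported in the interior of the disk.  Running this isotopy through $g$ gives an exact Lagrangian isotopy of $L$ inside $\mathcal{D}_{\tau}(P)$ to a disjoint copy; applying \cite[Exercise 6.1.A]{P} converts it into a Hamiltonian isotopy of $\mathcal{D}_{\tau}(P)$, which then extends to all of $M$ via the Biran embedding and displaces $L$.

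The step that in my view requires the most care is the global trivialization of $\beta|_{P|_{\Lambda}}$: one must check that both ambiguities in the holonomy formula---replacing $u$ within its relative homotopy class, and replacing it by another bounding disk for $\gamma$---are absorbed by the Bohr--Sommerfeld hypothesis, so that horizontal translation from the basepoint is genuinely path-independent.  Once this is confirmed, every other step is a routine repackaging of the discussion preceding the statement.
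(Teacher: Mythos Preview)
Your proposal is correct and is essentially the paper's own proof: the proposition is stated as a summary of the discussion immediately preceding it, and every step you list---determining $r=\sqrt{\alpha/\pi}$ from monotonicity, using the Bohr--Sommerfeld hypothesis and surjectivity of $\partial$ to trivialize $(P|_{\Lambda},\beta)$, identifying $L$ with $g(\Lambda\times\{1\}\times C_r)$, area-preservingly isotoping $C_r$ off itself when $\pi r^2<\tau/2$, and invoking \cite[Exercise 6.1.A]{P} to upgrade to a Hamiltonian isotopy---matches that discussion exactly.
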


\begin{ex} If $M=\C P^{n}(\sqrt{2})$ and $\Sigma=Q_n(\sqrt{2})$, then $\tau=\pi$ since the area of a complex line in $\C P^{n}(\sqrt{2})$ is $2\pi$ and such a line has intersection number $2$ with $\Sigma$.  Meanwhile $c_1(TM)$ evaluates on a complex line as $n+1$, so $\alpha=\frac{2\pi}{n+1}$.  Thus the condition $\alpha<\frac{\tau}{2}$ is obeyed provided that $n\geq 4$. One can show that the submanifolds $\mathbb{S}_{k,m}\subset Q_n(\sqrt{2})$ also obey the last condition in Proposition \ref{gendisp}; we omit the proof since we will later prove the displaceability of the relevant submanifolds of $\C P^{n}(\sqrt{2})$ in a more concrete way in Section \ref{cpnsubs}.
\end{ex}

\section{Generalized Polterovich submanifolds}\label{afintro}

For natural numbers $m\geq k\geq 0$ not both zero,  and for a real number $r>0$, define a submanifold $\mathcal{P}_{k,m}^{r}\subset T^{*}S^{k+m+1}$ as the image of the two-to-one map \begin{align*} \iota\co S^1\times S^k\times S^m &\to T^{*}S^{k+m+1} \\ (e^{i\theta},x,y)&\mapsto ((-r\sin \theta\,x,r\cos\theta\,y)  ,(\cos \theta\, x,\sin\theta\,y)) \end{align*}

Here we view $S^k$ and $S^m$ as subsets of $\R^{k+1}$ and $\R^{m+1}$ respectively, and view elements of $T^*S^{k+m+1}$ as pairs $(p,q)\in \mathbb{R}^{k+m+2}\times\mathbb{R}^{k+m+2}$ where $p\cdot q=0$ and $|q|=1$.  (Also, elements of $\R^{k+m+2}$ are written as $(a,b)$ where $a\in \R^{k+1}$ and $b\in \R^{m+1}$.)

We see that $\iota(e^{i\theta},x,y)=\iota(e^{i\phi},u,v)$ if and only if $(e^{i\phi},u,v)=\pm(e^{i\theta},x,y)$, so that $\mathcal{P}_{k,m}^{r}$ is diffeomorphic to the quotient of $S^1\times S^k\times S^m$ by the action of $\mathbb{Z}/2\Z$ given by the antipodal map on each factor.  In particular if $k=0$ then $\mathcal{P}_{k,m}^{r}$ is diffeomorphic to $S^1\times S^m$ (by taking the slice for the $\Z/2\Z$ action consisting of points of form $(e^{i\theta},1,y)$).    

$\mathcal{P}_{k,m}^{r}$ may be characterized as the union of the lifts to $T^{*}S^{k+m+1}$ of the speed-$r$ geodesics that pass through points of the form $(x,0),(0,y)\in S^{k+m+1}$ where $x\in S^k$ and $y\in S^m$.  Equivalently, where for each nonnegative integer $j$ we write $e_{0,j}=(1,0,\ldots,0)\in S^j\subset \R^{j+1}$ and $\vec{0}_j$ for the zero vector in $\R^j$, we may construct $\mathcal{P}_{k,m}^{r}$ by beginning with the lift of the speed-$r$ geodesic passing through $(e_{0,k},\vec{0}_{m+1})$ and $(\vec{0}_{k+1},e_{0,m})$, and then taking the orbit of this lifted curve under the natural action of $SO(k+1)\times SO(m+1)$ on $T^{*}S^{k+m+1}$.  In the special case that $k=0$, yet another characterization of $\mathcal{P}_{0,m}^{r}\subset T^{*}S^{m+1}$ is as the image of the sphere of radius $r$ in the fiber  $T^{*}_{e_{0,m+1}}S^{m+1}$ under the geodesic flow.  In particular $\mathcal{P}_{0,1}^{r}$ is the torus considered in \cite{AF} (which attributes its introduction to Polterovich), and the Remark at the end of \cite{AF} also discusses $\mathcal{P}_{0,m}^{r}$ for $m>1$. Generalizing the argument of \cite{AF}, we prove:

\begin{prop}\label{pnd}
For all $k,m,r$, $\mathcal{P}_{k,m}^{r}$ is a monotone, nondisplaceable Lagrangian submanifold of $T^{*}S^{k+m+1}$.
\end{prop}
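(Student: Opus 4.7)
My plan is to establish the Lagrangian condition and monotonicity by direct computation exploiting the large symmetry group of $\mathcal{P}_{k,m}^{r}$, and then to obtain nondisplaceability by adapting the Rabinowitz--Floer / leafwise-intersection approach of \cite{AF}.

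First I would pull back the canonical one-form $\lambda = p\cdot dq$ by $\iota$. With $q=(\cos\theta\,x,\sin\theta\,y)$ and $p=r(-\sin\theta\,x,\cos\theta\,y)$, the constraints $|x|^{2}=|y|^{2}=1$ force $x\cdot dx=y\cdot dy=0$, and all cross terms collapse to leave $\iota^{*}\lambda = r\,d\theta$. This is closed, hence $\iota^{*}(d\lambda)=0$, and the dimension count $1+k+m=\dim S^{k+m+1}$ shows $\mathcal{P}_{k,m}^{r}$ is Lagrangian. Since $T^{*}S^{k+m+1}$ is exact, the area class of $\mathcal{P}_{k,m}^{r}$ in $H^{1}(\mathcal{P}_{k,m}^{r};\R)$ is $r[d\theta]$.

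For the Maslov class, I would use the fact that $\mathcal{P}_{k,m}^{r}$ is preserved by the Hamiltonian action of $SO(k+1)\times SO(m+1)\times S^{1}$ on $T^{*}S^{k+m+1}\setminus 0_{S^{k+m+1}}$ (the first two factors acting block-diagonally on $\R^{k+m+2}$ and the $S^{1}$ factor by the unit-speed geodesic flow generated by $|p|$). Equivariance forces the Maslov class to lie in the invariant subspace of $H^{1}(\mathcal{P}_{k,m}^{r};\R)$, which is spanned by $[d\theta]$. To pin down the coefficient one can restrict to a single orbit $\theta\mapsto\iota(e^{i\theta},e_{0,k},e_{0,m})$, use the Sasaki horizontal--vertical trivialization of $T(T^{*}S^{k+m+1})$ along the underlying great circle, and verify that the tangent Lagrangian subspaces complete two half-turns in the Lagrangian Grassmannian, giving Maslov class $2[d\theta]$. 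Proportionality on $\pi_{1}$-classes is then immediate; spherical classes are handled by noting that $\pi_{2}(T^{*}S^{k+m+1})=0$ for $k+m\geq 2$, while for $k+m+1=2$ the generator of $\pi_{2}(T^{*}S^{2})$ pulls back from the zero section and has vanishing area and Maslov index.

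The hard step is nondisplaceability. The crucial geometric feature is that $\mathcal{P}_{k,m}^{r}$ lies in the contact hypersurface $\{|p|=r\}\subset T^{*}S^{k+m+1}$ and is foliated by the closed Reeb orbits there, namely the unit-speed geodesics of $S^{k+m+1}$ (all periodic of the same length). Equivalently, symplectic reduction at $|p|=r$ produces the complex quadric $Q_{k+m+1}(\sqrt{2})$ (the space of oriented great circles on $S^{k+m+1}$), and $\mathcal{P}_{k,m}^{r}$ descends to a compact Lagrangian identifiable with $\mathbb{S}_{k,m}$. Running the Rabinowitz--Floer construction of \cite{AF} on the pair $(\{|p|=r\},\mathcal{P}_{k,m}^{r})$ should produce a Floer-theoretic invariant whose non-vanishing forces every compactly supported Hamiltonian diffeomorphism $\phi$ of $T^{*}S^{k+m+1}$ to satisfy $\phi(\mathcal{P}_{k,m}^{r})\cap\mathcal{P}_{k,m}^{r}\neq\varnothing$. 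The main obstacle is verifying that \cite{AF}'s argument extends to the higher-dimensional setting with underlying manifold $(S^{1}\times S^{k}\times S^{m})/(\Z/2\Z)$ in place of a torus: concretely, one needs the usual transversality package together with an \emph{a priori} energy bound keeping Floer trajectories in a compact region of $T^{*}S^{k+m+1}$, both of which should follow as in \cite{AF} from the periodicity of the Reeb flow and the finite oscillation of any compactly supported Hamiltonian.
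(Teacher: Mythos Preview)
Your monotonicity argument has two weak points. First, the equivariance step is empty: a connected Lie group acts trivially on $H^{*}(\mathcal{P}_{k,m}^{r};\R)$, so the ``invariant subspace'' of $H^{1}$ is all of $H^{1}$. When $k=1$ or $m=1$ (and in particular for the $k=0,m=1$ case already in \cite{AF}) the group $H^{1}$ has rank larger than one, and you need a separate argument to kill the Maslov class on the $S^{k}$- and $S^{m}$-loops. The paper handles this by writing down explicit disks $u^{(2)},u^{(3)}$ lying in cotangent fibers (hence of zero area) and exhibiting a Lagrangian-subbundle homotopy to the vertical distribution (hence Maslov zero). Second, your Maslov value is off: the $\theta$-loop has Maslov index $2(k+m)$, not $2$. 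Your ``two half-turns'' picture is implicitly one-dimensional; in the Lagrangian Grassmannian of $\C^{k+m+1}$ the index scales with the number of directions that rotate, and here all transverse Jacobi fields along the geodesic contribute. (The paper computes this by pushing the disk into the quadric $Q_{k+m+2}(\sqrt{2})$, gluing on a fiber disk of the Biran bundle, and reading off $c_{1}$.) The wrong constant is harmless for monotonicity itself, but it matters downstream for the minimal Maslov number.

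The more serious gap is nondisplaceability. What you describe as ``the Rabinowitz--Floer / leafwise-intersection approach of \cite{AF}'' is not what \cite{AF} does, and your sketch does not yield the conclusion you want. Leafwise-intersection results for the unit sphere bundle give you a point $x\in\{|p|=r\}$ with $\phi(x)$ on the same Reeb leaf as $x$; this says nothing about whether $x$ lies on the particular Lagrangian $\mathcal{P}_{k,m}^{r}$, and the reduction to a Lagrangian $\mathbb{S}_{k,m}$ in the quadric does not by itself transfer nondisplaceability back upstairs. The actual argument in \cite{AF}, which the paper extends, is Lagrangian Floer homology over $\Z/2\Z$ combined with a symplectic involution $I$ (a reflection through a great circle) that fixes $\mathcal{P}_{k,m}^{r}$ setwise, fixes all critical points of a carefully chosen $I$-invariant Morse function, and acts freely on positive-Maslov Floer strips. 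The pairing kills all nontrivial holomorphic contributions, so the Floer complex degenerates to the Morse complex and $HF\neq 0$. The genuine work in the extension is finding an $I$-invariant Morse function on $(S^{1}\times S^{k}\times S^{m})/(\Z/2\Z)$ with all critical points fixed by $I$ (the function from \cite{AF} does not generalize) and checking regularity for the flowlines trapped in $\mathrm{Fix}(I)$; your proposal does not engage with either of these.
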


\begin{proof}
For $k=0$ this was proven in the remark at the end of \cite{AF}, so let us assume that $1\leq k\leq m$.  Also, the fiberwise dilation by the factor $2r$ is a conformal symplectomorphism of $T^*S^{k+m+1}$ that maps $\mathcal{P}_{k,m}^{1/2}$ to $\mathcal{P}_{k,m}^{r}$, so it suffices to prove the result when $r=1/2$.  

We first prove monotonicity.  Since $k+m+1\geq 3$, the boundary map $\partial \co \pi_2(T^{*}S^{k+m+1},\mathcal{P}_{k,m}^{1/2})\to \pi_1(\mathcal{P}_{k,m}^{1/2})$ is an isomorphism.  Of course, the map $\iota\co S^1\times S^k\times S^m\to \mathcal{P}_{k,m}^{1/2}$ embeds $\pi_1(S^1\times S^k\times S^m)$ as an index-two subgroup of $\pi_1(\mathcal{P}_{k,m}^{1/2})$.  So to establish monotonicity it is enough to prove that the area and Maslov homomorphisms are positively proportional on suitable preimages under $\partial$ of a set of generators for $\iota_*\pi_1(S^1\times S^k\times S^m)$.  Accordingly, define disks $u^{(p)}\co (D^2,\partial D^2)\to (T^{*}S^{k+m+1},\mathcal{P}_{k,m}^{1/2})$ for $p\in\{1,2,3\}$ by: 

\[ u^{(1)}(se^{i\theta})=\left(\frac{1}{2}\left(-s\sin\theta,\vec{0}_{k},s\cos\theta, \vec{0}_{m}   \right),\left(s\cos\theta,\vec{0}_{k},s\sin\theta,\vec{0}_{m-1},\sqrt{1-s^2}\right)\right) \]

 \[ u^{(2)}(se^{i\theta})=\left(\left(\frac{s}{2}\cos\theta,\frac{s}{2}\sin\theta,\vec{0}_{k+m}\right),\left(\vec{0}_{k+m+1},1\right)\right)  \]
 
 \[ u^{(3)}(se^{i\theta}) = \left(\left(\vec{0}_{k+1},\frac{s}{2}\cos\theta,\frac{s}{2}\sin\theta,\vec{0}_{m-1} \right),\left(1,\vec{0}_{k+m+1}\right) \right)\]
 
Here $\vec{0}_{j}$ denotes the zero vector in $\mathbb{R}^j$.

So the boundaries of $u^{(1)},u^{(2)},u^{(3)}$ coincide with the images under $\iota$ of great circles in the factors $S^1,S^k,S^m$, respectively, of $S^1\times S^k\times S^m$.  Of course, if $k>1$, then the corresponding great circle in $S^k$ is contractible and $u^{(2)}$ represents a trivial element in $\pi_2(T^{*}S^{k+m+1},\mathcal{P}_{k,m}^{1/2})$, and likewise if $m>1$ then $u^{(3)}$ is homotopically trivial.  Regardless of the values of $k$ and $m$, let us see that $u^{(2)}$ and $u^{(3)}$ both evaluate trivially under the area and Maslov homomorphisms.

Indeed, the statement that $u^{(2)}$ and $u^{(3)}$ have zero area is obvious, since they have image contained in the cotangent fibers $T^{*}_{(\vec{0}_{k+m},1)}S^{k+m+1}$ and $T^{*}_{(1,\vec{0}_{k+m})}S^{k+m+1}$, respectively, and these cotangent fibers are Lagrangian.  To see that $u^{(3)}$ has trivial Maslov index, it suffices to show that the Lagrangian subbundle $(u^{(3)}|_{\partial D^2})^{*}T\mathcal{P}_{k,m}^{1/2}$ of $(u^{(3)}|_{\partial D^2})^{*}T(T^{*}S^{k+m+1})$ is isotopic through Lagrangian subbundles to the pullback by $u^{(3)}|_{\partial D^2}$ of the tangent bundle to the cotangent fiber $T^{*}_{(1,\vec{0}_{k+m})}S^{k+m+1}$, since the latter Lagrangian subbundle extends over the interior of $u^{(3)}$.

We now confirm this latter statement.  Where we again write $e_{0,k} =(1,\vec{0}_{k})\in S^k$ and where $v_m(\theta)=(\cos\theta,\sin\theta,\vec{0}_{m-1})$, we have $u^{(3)}(e^{i\theta}) = ((\vec{0}_{k+1},v_{m}(\theta)/2),(e_{0,k},\vec{0}_{m+1}))$ and \begin{align*} T_{u^{(3)}(e^{i\theta})}\mathcal{P}_{k,m}^{1/2} = \left\{\left((-te_{0,k}/2,y),(x,tv_m(\theta))\right)|t\in\R,\,x\in e_{0,k}^{\perp}\subset\R^{k+1},\,y\in v_{m}(\theta)^{\perp}\subset\R^{m+1}\right\}
\end{align*}

Defining \[ \mathcal{L}_{a}(e^{i\theta}) = \left\{\left(\left(ax-\frac{(1-a)t}{2}e_{0,k},y+atv_{m}(\theta)\right),\left((1-a)x,(1-a)tv_m(\theta)\right)\right)\right.\qquad \qquad\] 

\[ \qquad \qquad\qquad\qquad\qquad\qquad\left.\left|t\in\R,\,x\in e_{0,k}^{\perp}\subset\R^{k+1},\,y\in v_{m}(\theta)^{\perp}\subset\R^{m+1}\right.\right\} \]for $0\leq a\leq 1$ then gives an isotopy of Lagrangian subbundles from $(u^{(3)}|_{\partial D^2})^{*}T\mathcal{P}_{k,m}^{1/2}$ to the pullback by $u^{(3)}|_{\partial D^2}$ of the vertical subbundle of $T(T^*S^{k+m+1})$.  Thus indeed $u^{(3)}$ has Maslov index zero.  The same argument shows that $u^{(2)}$ has Maslov index zero.

Thus since the homotopy classes of $u^{(1)},u^{(2)},$ and $u^{(3)}$ generate an index-two subgroup of $\pi_2(T^{*}S^{k+m+1},\mathcal{P}_{k,m}^{1/2})$, to see that $T^*S^{k+m+1}$ is monotone it suffices to check that the area and Maslov homomorphisms have the same sign on $u^{(1)}$.  Where $\lambda$ is the canonical one-form on $T^{*}S^{k+m+1}$ we have \[ (u^{(1)})^{*}\lambda = -\frac{s}{2}\sin\theta d(s\cos\theta)+\frac{s}{2}\cos\theta d(s\sin\theta) = \frac{s^2}{2}(\cos\theta d(\sin\theta)-\sin\theta d(\cos\theta)) = \frac{s^2}{2}d\theta \] and so $(u^{(1)})^{*}d\lambda$ is the standard area form $d\left(\frac{s^2}{2}d\theta\right)$ on the unit disk.  Thus the area of $u^{(1)}$ is $\pi$.  Meanwhile we will see below in Proposition \ref{futureprop} that $u^{(1)}$ has Maslov index $2(k+m)$, completing the proof of that $\mathcal{P}_{k,m}^{1/2}$ is monotone modulo Proposition \ref{futureprop}.

Continuing to assume that $k$ (and, as always, also $m$) is positive, $\pi_2(T^{*}S^{k+m+1},\mathcal{P}_{k,m}^{1/2})$ is generated by the classes $[u^{(1)}],[u^{(2)}],[u^{(3)}]$ of the disks just described together with another class $\delta$ which obeys $2\delta= [u^{(1)}]+[u^{(2)}]+[u^{(3)}]$ (the boundary of a representative of $\delta$ goes halfway around great circles in each of the $S^1,S^k,S^m$ factors and so defines a loop in $\mathcal{P}_{k,m}^{1/2}=(S^1\times S^k\times S^m)/(\Z/2\Z)$).  So our above Maslov index computations show that $\delta$ has Maslov index $k+m$, and so the minimal Maslov number of $\mathcal{P}_{k,m}^{1/2}$ is $k+m$ when $k>0$.  (On the other hand when $k=0$ there is no analogue of the class $\delta$ and the minimal Maslov number is $2m$.)  

We now turn to nondisplaceability.  For this purpose we can borrow the argument of \cite{AF}, which established nondisplaceability in the case that $k=0$; for the most part this extends straightforwardly to the case $k>0$, except perhaps in one respect.  Since $\mathcal{P}_{k,m}^{1/2}$ is monotone with minimal Maslov number at least $2$, its Floer homology (with $\Z/2\Z$ coefficients) can be defined as in \cite{Oh95}. The basic idea from \cite{AF} is that, if one makes suitable auxiliary choices in the definition of the Floer complex, including a Morse function $f\co \mathcal{P}_{k,m}^{1/2}\to\R$, then the Floer complex is spanned as a $(\Z/2\Z)$-module by the critical points of $f$, and the Floer differential  counts gradient flowlines of $f$ together with certain strips with positive Maslov index, but the positive-Maslov-index strips come in pairs due to a symmetry and hence do not contribute to the Floer differential over $\Z/2\Z$.

We can use essentially the same symmetry considered in \cite{AF}: define $I_0\co S^{k+m+1}\to S^{k+m+1}$ by \[ I_0(q_1,q_2,\ldots,q_{k+1},q_{k+2},q_{k+3},\ldots,q_{k+m+2}) = (q_1,-q_2,\ldots,-q_{k+1},q_{k+2},-q_{k+3},\ldots,-q_{k+m+2}) \] so that $I_0$ is a reflection through a great circle passing through the points $(e_{0,k},\vec{0}_{m+1})$ and $(\vec{0}_{k+1},e_{0,m})$, and then define $I\co T^*S^{k+m+1}\to T^*S^{k+m+1}$ as the cotangent lift of $I_0$.  Clearly $I(\mathcal{P}_{k,m}^{1/2}) = \mathcal{P}_{k,m}^{1/2}$.  The Morse function $f$ used by \cite{AF} when $k=0$ does not adapt well to the case that $k>0$, so we use a different one, as follows: define $f\co\mathcal{P}_{k,m}^{1/2}\to\R$ by the property that its pullback by the double cover $\iota \co S^1\times S^k\times S^m\to\mathcal{P}_{k,m}^{1/2}$ is given by \[ (f\circ\iota)(e^{i\theta},x,y) = 5\sin(2\theta)+(\cos\theta)(x_1+y_1) \] for $x=(x_1,\ldots,x_{k+1})\in S^k$ and $y=(y_1,\ldots,y_{m+1})\in S^m$.  (The right hand side above is clearly invariant under simultaneous application of the antipodal map on $S^1,S^k,S^m$, so does indeed define a function $f\co \mathcal{P}_{k,m}^{1/2}\to\R$.)  It is easy to check that $f$ is Morse, that $f\circ I = f$, and that at any critical point $(e^{i\theta},x,y)$ of $f\circ \iota$ we must have $\cos\theta\neq 0$, forcing $dx_1=dy_1=0$, so that $x=\pm e_{0,k}$ and $y=\pm e_{0,m}$.  In particular all critical points of $f$ are fixed by the involution $I$.

Now for suitably small $\ep>0$ we use the Morse function $\ep f$ to construct a Floer complex for $\mathcal{P}_{k,m}^{1/2}$ over $\Z/2\Z$ just as in \cite{AF}: where $X_{\ep f}$ denotes the Hamiltonian vector field of an $I$-invariant extension of $\ep f$ to a function on $T^*S^{k+m+1}$ the generators for the Floer complex are integral curves $\gamma\co [0,1]\to T^{*}S^{k+m+1}$ for $X_{\ep f}$ which begin and end on $\mathcal{P}_{k,m}^{1/2}$ (for sufficiently small $\ep$ these will be precisely the constant curves at critical points of $f$), and the Floer differential counts, in the standard way, solutions $u\co \R\times[0,1]\to T^{*}S^{k+m+1}$ to \begin{equation}\label{floereq} \frac{\partial u}{\partial s}+J_t\left(\frac{\partial u}{\partial t}-X_{\ep f}\right) = 0\end{equation} with boundary mapping to $\mathcal{P}_{k,m}^{1/2}$, where $J_t$ is a suitably generic family of tame almost complex structures on $T^*S^{k+m+1}$.  In order to apply the Albers-Frauenfelder symmetry argument we would like to require the almost complex structures $J_t$ to be $I$-invariant, which of course \emph{a priori} might be thought to conflict with genericity.  As in \cite{AF}, \cite[Propositon 5.13]{KS} shows that families of almost complex structures generic amongst those which are $I$-invariant will have the property that all solutions $u$ to (\ref{floereq}) whose images are not contained in $Fix(I)$ are regular; moreover, any solutions $u$ with image contained in $Fix(I)$ necessarily are contractible and so have Maslov index zero.  For small $\ep$, these Maslov-index-zero solutions will have very low energy and so will be contained in a Darboux--Weinstein neighborhood of $\mathcal{P}_{k,m}^{1/2}$, and so assuming that the extension of $f$ to $T^*S^{k+m+1}$ has been chosen appropriately, an argument as in \cite[Proposition 3.4.6]{Poz} implies that all of these Maslov-index-zero solutions will be given by $u(s,t)=\gamma(t)$ for some negative gradient flowline $\gamma$ of $\ep f\co \mathcal{P}_{k,m}^{1/2}\to\R$.  Also, such solutions are cut out transversely as solutions of the Floer equation if and only if the corresponding negative gradient flowlines $\gamma$ are cut out transversely as solutions of the negative gradient flow equation.

Now an argument similar to (but simpler than) the proof of the above-cited result of \cite{KS} shows that, for generic $I$-invariant metrics on $\mathcal{P}_{k,m}^{1/2}$, all negative gradient flowlines for $\ep f$ which are not contained in the fixed locus of $I|_{\mathcal{P}_{k,m}^{1/2}}$ are cut out transversely; indeed this result continues to hold if we restrict to $I$-invariant metrics which coincide with the standard metric (induced by the quotient $\iota\co S^1\times S^k\times S^m\to \mathcal{P}_{k,m}^{1/2}$) on the fixed locus.  The negative gradient flowlines (with respect to the standard metric) which \emph{are} contained in the fixed locus of $I|_{\mathcal{P}_{k,m}^{1/2}}$, on the other hand, are given by $\iota\circ\hat{\gamma}$ where $\hat{\gamma}\co\R\to S^1\times S^k\times S^m$ has the form $\hat{\gamma}(s)=(e^{i \theta(s)},\ep_1 e_{0,k},\ep_2 e_{0,m})$ for $\ep_1,\ep_2\in\{-1,1\}$ and $\theta\co\R\to\R$ obeying the differential equation $\theta'(s) = -(10\cos(2\theta(s))-(\sin\theta(s))(\ep_1+\ep_2))$.  For these special solutions it is a straightforward matter to see that the kernel of the linearization of the negative gradient flow equation has dimension equal to the difference of the indices of the critical points $\lim_{s\to\pm\infty}\gamma(s)$; hence the linearization is surjective and so the negative gradient flowlines that are contained in the fixed locus are indeed cut out transversely.  Consequently for a generic $I$-invariant metric on $\mathcal{P}_{k,m}^{1/2}$ which coincides with the standard metric along the fixed locus, all negative gradient flowlines are cut out transversely, and so just as in \cite{AF} we can find families of $I$-invariant almost complex structures for which all solutions to (\ref{floereq}) are cut out transversely.

We then construct the Floer complex using such a family of $I$-invariant almost complex structures.  Since $I$ acts freely on the set of positive-Maslov-index solutions to (\ref{floereq}), the $\Z/2\Z$-counts of such solutions are zero, and so the Floer differential only counts Maslov-index-zero solutions, all of which are $t$-independent and reduce to negative gradient flowlines.  Consequently the $\Z/2\Z$-Floer complex of $\mathcal{P}_{k,m}^{1/2}$, defined using these data, coincides with the $\Z/2\Z$-Morse complex of the function $\ep f$, and in particular the $\Z/2\Z$-Floer homology of $\mathcal{P}_{k,m}^{1/2}$ is nonzero, proving nondisplaceability.
\end{proof}

\section{Lagrangian submanifolds of quadrics}\label{quadsubs}

Where the quadric $Q_{k+m+2}(\sqrt{2})$ and the sphere $\mathbb{S}_{0,k+m+1}$ are as defined in Section \ref{birquad}, define a map $\Psi\co D_{1}^{*}S^{k+m+1}\to Q_{k+m+2}(\sqrt{2})$ by \[ \Psi(p,q)=[\sqrt{1-|p|^2}:p+iq]. \]  Here we write a general element of $Q_{k+m+2}(\sqrt{2})\subset \C P^{k+m+2}(\sqrt{2})$ as $[z_0:z]$ where $z_0\in\C$, $z\in \C^{k+m+2}$, and $|z_0|^2+|z|^2=2$.   The map $\Psi$, which is easily seen to be a symplectomorphism to its image,  gives a dense Darboux-Weinstein neighborhood of the sphere $\mathbb{S}_{0,k+m+1}\subset Q_{k+m+2}(\sqrt{2})$, with image equal to the complement of the hyperplane section \[ Q_{k+m+1}(\sqrt{2}) = \{[z_0:\cdots:z_{k+m+2}]\in Q_{k+m+2}(\sqrt{2})|z_0=0\} \]  Thus $\Psi$ should be seen as complementary to the map $\Theta_Q$ from Section \ref{birquad}, which gives a dense tubular neighborhood of the symplectic submanifold $Q_{k+m+1}(\sqrt{2})$ with complement $\mathbb{S}_{0,k+m+1}$.

Considering again the submanifolds $\mathcal{P}_{k,m}^{r}$ where now we always assume that $0<r<1$, we see that $\Psi\circ \iota\co S^1\times S^{k}\times S^m\to Q_{k+m+2}(\sqrt{2})$ is given by \[ \Psi\circ \iota(e^{i\theta},x,y) = \left[\sqrt{1-r^2}: ((-r\sin\theta+i\cos\theta)x,(r\cos\theta+i\sin\theta)y)\right] 
 \]  Meanwhile, where $\Theta_Q\co \mathcal{D}_{2\pi}^{k+m+1}(P_Q)\to Q_{k+m+2}(\sqrt{2})$ is the disk bundle embedding from Section \ref{birquad}, one easily computes that, for $(e^{i\theta},x,y)\in S^1\times S^k\times S^m$, \[ 
\Psi\circ \iota(e^{i\theta},x,y) = \Theta_Q([(ix,y),\sqrt{2-2r}e^{-i\theta}]).\]  From this we immediately see that:

\begin{prop}\label{maincorr} For $0<r<1$, the image under $\Psi\co D_{1}^{*}S^{k+m+1}\to Q_{k+m+2}(\sqrt{2})$ of the Lagrangian submanifold $\mathcal{P}_{k,m}^{r}$ is \textbf{equal} to the image under $\Theta_Q\co \mathcal{D}_{2\pi}^{k+m+1}(P_Q)\to Q_{k+m+2}(\sqrt{2})$ of the radius-$\sqrt{2-2r}$ circle bundle over the Lagrangian submanifold \[ \mathbb{S}_{k,m}=\{[ix:y]|x\in S^k,\,y\in S^m\}\subset Q_{k+m+1}(\sqrt{2}) \] of the zero section $Q_{k+m+1}(\sqrt{2})$ of $\mathcal{D}_{2\pi}^{k+m+1}(P_Q)$.
\end{prop}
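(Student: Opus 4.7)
The plan is to verify the displayed identity
\[
\Psi\circ \iota(e^{i\theta},x,y) = \Theta_Q\bigl([(ix,y),\sqrt{2-2r}\,e^{-i\theta}]\bigr)
\]
by direct computation and then conclude by observing that each parametrization sweeps out the corresponding side surjectively. Since $\mathcal{P}_{k,m}^{r}$ is by definition $\iota(S^1\times S^k\times S^m)$, the left-hand image is $(\Psi\circ\iota)(S^1\times S^k\times S^m)$. On the other side, the radius-$\sqrt{2-2r}$ circle bundle over $\mathbb{S}_{k,m}$ is parametrized by triples $(x,y,e^{-i\theta})\in S^k\times S^m\times S^1$ via $(x,y,e^{-i\theta})\mapsto [(ix,y),\sqrt{2-2r}\,e^{-i\theta}]$, since $\mathbb{S}_{k,m}=\{[ix:y]\mid x\in S^k,\,y\in S^m\}$ and any point in the fiber over $[ix:y]$ has a representative of this form using the equivalence $[w,\zeta]=[e^{i\phi}\cdot w,e^{i\phi}\zeta]$ defining $\mathcal{D}_{2\pi}^{k+m+1}(P_Q)$. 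Granted the displayed identity, the two images coincide.

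The computation of $\Psi\circ\iota(e^{i\theta},x,y)$ is immediate from the definitions: with $p=(-r\sin\theta\,x,\,r\cos\theta\,y)$ and $q=(\cos\theta\,x,\,\sin\theta\,y)$ we have $|p|^2=r^2$ (using $|x|=|y|=1$), hence
\[
\Psi\circ\iota(e^{i\theta},x,y) = \bigl[\sqrt{1-r^2}:(-r\sin\theta+i\cos\theta)x:(r\cos\theta+i\sin\theta)y\bigr].
\]
For the right-hand side, I would note first that $w=(ix,y)$ lies in $P_Q$ (its real part $(0,y)$ and imaginary part $(x,0)$ are orthogonal unit vectors), and then substitute $\zeta=\sqrt{2-2r}\,e^{-i\theta}$ into $\Theta_Q$. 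Using $|\zeta|^2=2-2r$, so $1-|\zeta|^2/4=(1+r)/2$ and $\sqrt{1-|\zeta|^2/4}\cdot\zeta=\sqrt{1-r^2}\,e^{-i\theta}$, together with $\bar{w}=(-ix,y)$ and $\zeta^2=(2-2r)e^{-2i\theta}$, a short expansion of $(1-|\zeta|^2/4)w-\zeta^2\bar{w}/4$ and the substitution of double-angle identities (after multiplying homogeneous coordinates by $e^{i\theta}$) produces exactly the triple displayed above.

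The main obstacle is nothing conceptual but rather bookkeeping: one must keep track of signs and the precise form of the $S^1$-action on $\mathcal{D}_{2\pi}^{k+m+1}(P_Q)$ defined in Section \ref{birquad} (which uses the \emph{opposite} of the standard complex action on $P_Q$), and correctly conjugate $w=(ix,y)$ when $x,y$ are real so that $\bar{w}=(-ix,y)$. Once these conventions are respected, the verification is a one-line algebraic identity, and the proposition follows immediately from matching the two parametrizations of the same subset of $Q_{k+m+2}(\sqrt{2})\setminus(\mathbb{S}_{0,k+m+1}\cup Q_{k+m+1}(\sqrt{2}))$.
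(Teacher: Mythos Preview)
Your proposal is correct and follows exactly the paper's approach: the paper derives the identity $\Psi\circ\iota(e^{i\theta},x,y)=\Theta_Q([(ix,y),\sqrt{2-2r}\,e^{-i\theta}])$ in the paragraph immediately preceding the proposition and then simply asserts that the proposition follows. You have supplied the algebraic details of that verification (which the paper omits), and your handling of the conventions and the final multiplication of homogeneous coordinates by $e^{i\theta}$ is accurate.
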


We now turn to the one remaining loose end in the proof of Proposition \ref{pnd}, namely the computation of the Maslov index of the disk $u^{(1)}$ disk described near the start of the proof of that proposition. 
\begin{prop}\label{futureprop}
The Maslov index of $u^{(1)}$ is $2(k+m)$.
\end{prop}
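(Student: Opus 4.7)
The plan is to compute $\mu(u^{(1)})$ by trivializing the pullback symplectic bundle $(u^{(1)})^{*}T(T^{*}S^{k+m+1})$ via parallel transport along the boundary loop and then using additivity of the Maslov index over a symplectic direct sum decomposition adapted to the $SO(k+1)\times SO(m+1)$-symmetry. The starting observation is that the boundary loop $\gamma(\theta)=u^{(1)}(e^{i\theta})$ is the lift to $T^{*}S^{k+m+1}$ of a great-circle geodesic: writing $q(\theta)=\cos\theta\,\hat{e}_1+\sin\theta\,\hat{e}_{k+2}$ and $p(\theta)=\tfrac{1}{2}(-\sin\theta\,\hat{e}_1+\cos\theta\,\hat{e}_{k+2})$ (where $\hat{e}_i$ denote the standard basis vectors of $\R^{k+m+2}$), one has $p=\tfrac{1}{2}\dot q$, so $p$ is parallel-transported along $q$ and $\partial_\theta\gamma$ is purely horizontal. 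Since parallel transport on the round sphere around a closed great-circle returns to the identity, parallel transport of any orthonormal frame of $T_{\hat{e}_1}S^{k+m+1}$ along $\gamma$ gives an honest symplectic trivialization of $\gamma^{*}T(T^{*}S^{k+m+1})$ corresponding to a null-homotopic loop in $Sp$, so in particular it extends over the filling disk in a manner suitable for reading off the Maslov index.

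Next I would take the parallel-transport of the frame $f_1=\hat{e}_{k+2}$ (the initial geodesic direction), $f_{j+1}=\hat{e}_{j+1}$ for $j=1,\dots,k$, and $f_{k+j+1}=\hat{e}_{k+j+2}$ for $j=1,\dots,m$, which splits the trivialized symplectic bundle as the direct sum $\bigoplus_{j=1}^{k+m+1}(\R f_j\oplus\R f_j^{*})$ of $2$-dimensional symplectic pieces. Using the explicit formulas for $D\iota$ from the proof of Proposition~\ref{pnd}, one checks that in this trivialization the Lagrangian $T_{\gamma(\theta)}\mathcal{P}_{k,m}^{1/2}$ respects the decomposition as follows: in the $\R f_1\oplus\R f_1^{*}$ piece, $\partial_\theta\gamma$ contributes the constant horizontal line $\R f_1$ (because $p$ is parallel); in the $j$-th piece associated to $\xi=e_{j,k}$ (for $j=1,\dots,k$), $D\iota(\xi)$ is the line spanned by $\cos\theta\,f_{j+1}-\tfrac{1}{2}\sin\theta\,f_{j+1}^{*}$; and in the $j$-th piece associated to $\eta=e_{j,m}$ (for $j=1,\dots,m$), $D\iota(\eta)$ is the line spanned by $\sin\theta\,f_{k+j+1}+\tfrac{1}{2}\cos\theta\,f_{k+j+1}^{*}$. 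In particular, the Lagrangian literally decomposes as a direct sum of $1$-dimensional loops, one in each $2$-dimensional symplectic piece.

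Finally, the Maslov index of a constant Lagrangian line in a $2$-dimensional symplectic piece is $0$, while each of the remaining $k+m$ loops is an ellipse in its $2$-dimensional piece winding once around the origin, which contributes $\pm 2$ to the Maslov index with the sign determined by the direction of winding; so by additivity $\mu(u^{(1)})=0+2k\cdot(\pm 1)+2m\cdot(\pm 1)$. Checking the orientations (using the paper's convention $\omega=\sum dp_i\wedge dq_i$, under which the parametrized vectors $(\cos\theta,-\tfrac{1}{2}\sin\theta)$ and $(\sin\theta,\tfrac{1}{2}\cos\theta)$ in the $(q,p)$-plane each trace out a positively-oriented loop) yields $\mu(u^{(1)}) = 2(k+m)$, consistent with the positive area $\pi$ already computed. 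The hard part will be precisely this orientation bookkeeping: tracking simultaneously the paper's sign convention for $\omega$, the counterclockwise orientation of $\partial D^2$, and the direction of winding of each ellipse. But once the sign in one $2$-dimensional piece is verified, the symmetric roles of the $S^k$- and $S^m$-directions, together with the consistency check against the positive area, determine that all contributions are $+2$, yielding the desired total $2(k+m)$.
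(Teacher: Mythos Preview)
Your approach is correct in outline and genuinely different from the paper's.  The paper never trivializes the bundle directly; instead it pushes $u^{(1)}$ into the quadric $Q_{k+m+2}(\sqrt{2})$ via the symplectomorphism $\Psi$, observes that the boundary of $\Psi\circ u^{(1)}$ is (with reversed orientation) exactly the boundary of a disk fiber $\Theta_Q(D_{i,1})$ of Maslov index $2$, glues the two disks to form a sphere $S$, and reads off $\mu(u^{(1)})=2\langle c_1(TQ_{k+m+2}),[S]\rangle-2=2(k+m+1)-2$.  So the paper trades linear-algebraic bookkeeping for a Chern-class computation in an ambient closed manifold it had already set up for other reasons, while you stay entirely inside $T^*S^{k+m+1}$ and compute hands-on.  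Each approach has its virtues: yours is self-contained and makes the contribution of each $S^k$- and $S^m$-direction transparent; the paper's avoids all sign-tracking but depends on the quadric machinery.

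There is, however, one genuine gap in your argument that you should close, and it is \emph{not} the orientation check you flagged.  The sentence ``since parallel transport \ldots\ returns to the identity, [the trivialization] correspond[s] to a null-homotopic loop in $Sp$, so in particular it extends over the filling disk'' is a non sequitur.  Closing up is necessary for having a trivialization over $S^1$ at all, but it does not by itself imply extendability over the disk: concretely, the $q$-image of $u^{(1)}$ is a hemisphere in the $2$-sphere spanned by $\hat e_1,\hat e_{k+2},\hat e_{k+m+2}$, and your vectors $f_1$ and $f_{k+m+1}=\hat e_{k+m+2}$ do \emph{not} extend as tangent vectors over that hemisphere (the latter becomes normal at the center).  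The correct justification is this: any two orthonormal frames of $TS^{k+m+1}$ along the boundary circle differ by a loop in $SO(k+m+1)$, which under the diagonal action on horizontal$\,\oplus\,$vertical lands in $SU(k+m+1)\subset Sp(2(k+m+1))$; since $SU(k+m+1)$ is simply connected, this loop is null-homotopic in $Sp$, so your parallel-transport frame is homotopic in $Sp$ to any frame that does extend over the hemisphere.  Once you insert that sentence, your decomposition into $2$-dimensional symplectic pieces and the ensuing count $0+2k+2m$ are completely correct (I checked the winding directions against $\omega=d\eta\wedge dq$ and both families of ellipses are indeed positively oriented).
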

\begin{proof}Consider  the map $\Psi\circ u^{(1)}\co D^2\to Q_{k+m+2}(\sqrt{2})$.  We see that the restriction $\Psi\circ u^{(1)}|_{\partial D^2}$ maps a general point $e^{i\theta}\in S^1$ to \begin{align*} &\left[ \frac{\sqrt{3}}{2}:\left(-\frac{1}{2}\sin\theta+i\cos\theta,\vec{0}_{k},\frac{1}{2}\cos\theta+i\sin\theta,\vec{0}_{m}\right)\right] 
\\&= \Theta_Q([(-\sin\theta+i\cos\theta,\vec{0}_{k},\cos\theta+i\sin\theta,\vec{0}_{m}),1])
\\&= \Theta_Q([(i,\vec{0}_{k},1,\vec{0}_{m}),e^{-i\theta}]) \end{align*}
Thus the boundary of the disk $\Psi\circ u^{(1)}$ in $Q_{k+m+2}(\sqrt{2})$ is, as an oriented loop, precisely the opposite of the image under $\Theta_Q$ of the boundary of the disk $D_{i,1}$ of radius $1$ in the fiber over $[(i,\vec{0}_{k},1,\vec{0}_{m})]$ in the bundle $\mathcal{D}_{2\pi}^{k+m+1}(P_Q)\to Q_{k+m+1}(\sqrt{2})$.  Thus, in $Q_{k+m+2}(\sqrt{2})$, we may glue the disk $\Psi\circ u^{(1)}$ to the disk $\Theta_Q(D_{i,1})$ along their common boundary to form an oriented sphere $S\subset Q_{k+m+2}(\sqrt{2})$.  Now $\Theta_Q(D_{i,1})$, considered as a disk with boundary on $\Psi(\mathcal{P}_{k,m}^{1/2})$ is easily seen in view of Proposition \ref{maincorr} to have Maslov index $2$.  Consequently (using of course that $\Psi$ is a symplectomorphism to its image) we conclude that the Maslov index of $u^{(1)}$ is given by \begin{equation}\label{mudiff} \mu(u^{(1)})=2\langle c_1(TQ_{k+m+2}(\sqrt{2})),[S]\rangle -2 \end{equation}

Now for any $n$ a standard computation shows that the first Chern class of $TQ_{n}(\sqrt{2})$ is equal to $n-1$ times the restriction of the Poincar\'e dual of a hyperplane in $\C P^{n}$ to $Q_{n}(\sqrt{2})$, \emph{i.e.}, to $n-1$ times the Poincar\'e dual to a transversely-cut-out hyperplane section of the hypersurface $Q_n(\sqrt{2})\subset \C P^{n}$.  For this transverse hyperplane section we may take $\{[\vec{z}]\in Q_{n}(\sqrt{2}) | z_0=0\}$, which is just the image under $\Theta_Q$ of the zero-section of the disk bundle $\mathcal{D}_{2\pi}^{n-1}(P_Q)\to Q_{n-1}(\sqrt{2})$.  Where $k+m+2=n$, the sphere $S$ described in the previous paragraph intersects this zero section once, positively and transversely; hence \[ \langle c_1(TQ_{k+m+2}(\sqrt{2})),[S]\rangle = n-1=k+m+1 \] and by (\ref{mudiff}) $\mu(u^{(1)}) = 2(k+m)$.
\end{proof}

\begin{prop}\label{qmon} For $0\leq k\leq m$, $m\geq 1$, and $0<r<1$, the Lagrangian submanifold $\Psi(\mathcal{P}_{k,m}^{r})\subset Q_{k+m+2}(\sqrt{2})$ is monotone if and only if $r=1-\frac{1}{k+m+1}$.
\end{prop}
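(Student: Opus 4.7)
My plan is to deduce the value of $r$ from two compatibility constraints, then reduce the verification of monotonicity to a finite set of generators of $\pi_2(Q_{k+m+2}(\sqrt{2}), L)$.

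First, I would pin down the monotonicity constant from the ambient topology alone. Since $Q_{k+m+2}(\sqrt{2})$ is simply connected with $\pi_2\cong\Z$ generated by a complex line of area $2\pi$ and first Chern number $k+m+1$ (the Chern class formula being exactly the one used in the proof of Proposition~\ref{futureprop}), any monotone Lagrangian submanifold of $Q_{k+m+2}(\sqrt{2})$ must have monotonicity constant $\lambda=\pi/(k+m+1)$.

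Second, I would use Proposition~\ref{maincorr} to identify $L:=\Psi(\mathcal{P}_{k,m}^{r})$ with the radius-$\sqrt{2-2r}$ circle bundle over $\mathbb{S}_{k,m}$ inside the Biran disk bundle $\mathcal{D}_{2\pi}^{k+m+1}(P_Q)$.  The radius-$\sqrt{2-2r}$ disk fiber then defines a class in $\pi_2(Q_{k+m+2}(\sqrt{2}),L)$ with area $\pi(2-2r)=2\pi(1-r)$ and Maslov index $2$. Requiring its area-to-Maslov ratio to equal the forced $\lambda$ gives $\pi(1-r)=\pi/(k+m+1)$, hence $r=1-\frac{1}{k+m+1}$. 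This establishes the ``only if'' direction.

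For the ``if'' direction, with $r=1-1/(k+m+1)$, I would verify the condition $I_{\omega}=\lambda I_{\mu}$ on a generating set of $\pi_2(Q_{k+m+2}(\sqrt{2}),L)$.  The plan is to show this group is generated by (i) the disk fiber class $[D]$ above, and (ii) the images under $\Psi$ of the generators $[u^{(1)}],[u^{(2)}],[u^{(3)}]$ (and the half-class $\delta$ when $k\geq 1$) of $\pi_2(D_1^{*}S^{k+m+1},\mathcal{P}_{k,m}^{r})$ identified in the proof of Proposition~\ref{pnd}.  The generation argument is geometric and standard: any relative disk may be made transverse to the complex hypersurface $Q_{k+m+1}(\sqrt{2})$, and each transverse intersection point can be absorbed by adding or subtracting a copy of $[D]$ (whose algebraic intersection number with $Q_{k+m+1}(\sqrt{2})$ is $\pm 1$, since $[D]$ is a fiber dual to the zero section of the Biran disk bundle), producing a representative in $\mathrm{Im}(\Psi)=Q_{k+m+2}(\sqrt{2})\setminus Q_{k+m+1}(\sqrt{2})$.

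Finally, I would compute area and Maslov index on each generator and check the ratio. Scaling the one-form computation from the proof of Proposition~\ref{pnd}, the disk $u^{(1)}$ has area $2\pi r$ for general $r$, and by Proposition~\ref{futureprop} its Maslov index is $2(k+m)$; the disks $u^{(2)},u^{(3)}$ have vanishing area and Maslov index; and $\delta$ satisfies $2\delta=[u^{(1)}]+[u^{(2)}]+[u^{(3)}]$.  At $r=(k+m)/(k+m+1)$, all ratios equal $\pi/(k+m+1)$, confirming monotonicity.  The main technical obstacle is the generation claim in step three: once that codimension-two ``surgery along disk fibers'' argument is made precise, the remaining area/Maslov bookkeeping is a direct scaling of calculations already carried out in the paper.
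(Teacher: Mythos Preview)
Your proposal is correct, but the paper's proof is more streamlined in one important respect. Rather than proving that $\{[D],[u^{(1)}],[u^{(2)}],[u^{(3)}],\delta\}$ generates $\pi_2(Q_{k+m+2}(\sqrt{2}),L)$ via a transversality-and-surgery argument along $Q_{k+m+1}(\sqrt{2})$, the paper simply invokes the homotopy long exact sequence
\[
\pi_2(Q_{k+m+2}(\sqrt{2}))\to \pi_2(Q_{k+m+2}(\sqrt{2}),L)\xrightarrow{\partial}\pi_1(L).
\]
Since the area and Maslov homomorphisms are already known to be $\frac{\pi}{k+m+1}$-proportional on the image of $\pi_2(Q_{k+m+2}(\sqrt{2}))$ (this is exactly your first step), one only needs to check the ratio on any set of relative classes whose boundaries generate a finite-index subgroup of $\pi_1(L)$. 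This sidesteps your step-three obstacle entirely: no geometric generation argument for the full relative $\pi_2$ is needed. The paper also drops $[u^{(1)}]$ from its test set in favor of $[D]$ alone, since $\partial[D]=-\partial[u^{(1)}]$; your inclusion of both is harmless but redundant. Your area computation $I_\omega(u^{(1)})=2\pi r$ is correct and gives the same conclusion, just via a slightly longer route.
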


\begin{proof} As mentioned earlier, for any $n\geq 2$, if we view $Q_{n}(\sqrt{2})$ as a hypersurface in $\C P^{n}$ then $c_1(TQ_{n}(\sqrt{2}))$ is $n-1$ times the restriction of the Poincar\'e dual to a hyperplane in $\C P^{n}$.  Meanwhile the symplectic form on $Q_{n}(\sqrt{2})$ is the restriction of the symplectic form on $\C P^{n}(\sqrt{2})$, which in turn is cohomologous to $2\pi$ times the Poincar\'e dual of a hyperplane.  Thus the symplectic form $\omega$ on $Q_{n}(\sqrt{2})$ obeys \[ [\omega] = \frac{2\pi}{n-1}c_1(Q_{n}(\sqrt{2}))\in H^2(Q_n(\sqrt{2});\R) \]

So in view of the exact sequence (for $n=k+m+2$) \[ \xymatrix{ 
\pi_2(Q_{n}(\sqrt{2}))\ar[r] & \pi_2(Q_n(\sqrt{2}),\Psi(\mathcal{P}_{k,m}^{r}))\ar[r]^>>>>{\partial} &\pi_1(\Psi(\mathcal{P}_{k,m}^{r}))
       } \] 
$\Psi(\mathcal{P}_{k,m}^{r})$ is monotone if and only if there is a set of elements of $\pi_2(Q_{k+m+2}(\sqrt{2}),\Psi(\mathcal{P}_{k,m}^{r}))$ whose images under $\partial$ generate a finite-index subgroup of $\pi_1(\Psi(\mathcal{P}_{k,m}^{r}))$ and on each of which the area homomorphism takes value $\frac{\pi}{k+m+1}$ times the Maslov homomorphism.

For such a set we could take $\{\Psi\circ u^{(1)},\Psi\circ u^{(2)},\Psi\circ u^{(3)}\}$ where $u^{(1)},u^{(2)},u^{(3)}$ are as in the proof of Proposition \ref{pnd} (but rescaled in the obvious way to have boundary on $\mathcal{P}_{k,m}^{r}$ rather than $\mathcal{P}_{k,m}^{1/2}$); however it is slightly more convenient to replace $\Psi\circ u^{(1)}$ in this set by the image of a radius $\sqrt{2-2r}$ disk fiber of $\mathcal{D}_{2\pi}^{k+m+1}$ under the embedding $\Theta_Q\co \mathcal{D}_{2\pi}^{k+m+1}\to Q_{k+m+2}(\sqrt{2})$ (as we have discussed earlier, this disk fiber has precisely the opposite boundary as $\Psi\circ u^{(1)}$).  Since $\Psi$ is a symplectomorphism to its image, the proof of Proposition \ref{pnd} shows that $\Psi\circ u^{(2)}$ and $\Psi\circ u^{(3)}$ both have area and Maslov index equal to zero, and so obey the desired proportionality.  Meanwhile the disk fiber has area $2\pi(1-r)$ and Maslov index $2$, and so we obtain monotonicity if and only if \[ 2\pi(1-r) = \frac{2\pi}{k+m+1} \] \emph{i.e.}, $r=1-\frac{1}{k+m+1}$
\end{proof}

Accordingly we define \[ L^{Q}_{k,m} = \Psi\left(\mathcal{P}_{k,m}^{1-\frac{1}{k+m+1}}\right); \] we have seen that this is a monotone Lagrangian submanifold of the $2(k+m+1)$-dimensional quadric $Q_{k+m+2}(\sqrt{2})$, diffeomorphic to $\frac{S^1\times S^k\times S^m}{\Z/2\Z}$ where $\Z/2\Z$ acts by the antipodal map on each factor.    Proposition \ref{maincorr} shows that $L_{k,m}^{Q}$ is equal to the submanifold of $Q_{k+m+2}(\sqrt{2})$ obtained by applying the Biran circle-bundle construction to $\mathbb{S}_{k,m}\subset Q_{k+m+1}(\sqrt{2})$, proving half of Theorem \ref{gendimcorr}.

Here is a different, very concrete, characterization of the submanifold $L^{Q}_{k,m}\subset Q_{k+m+2}(\sqrt{2})$.  Where $D(1)$ denotes the open unit disk in $\C$, $\mathbb{H}(1)=D(1)\cap \{z:Im(z)> 0\}$, and $\overline{\mathbb{H}(1)}$ is the closure of $\mathbb{H}(1)$ in $\C$, define \[  f\co D(1)\to \overline{\mathbb{H}(1)} \] by \[ f(a+ib) = -\frac{ab}{\sqrt{1-b^2}}+i\sqrt{1-b^2} \]  Also define \[ c\co D(1)\to (0,\infty) \] by \[ c(z)=\sqrt{2-|z|^2-|f(z)|^2} \]  Note that, for $z_1,z_2\in D(1)$, we have $f(z_1)=f(z_2)$ if and only if $z_1=\pm z_2$.  Moreover, by construction, for all $z\in D(1)$ we have \[ |z|^2+|f(z)|^2+c(z)^2 = 2\qquad \mbox{and}\qquad z^2+f(z)^2+c(z)^2=0,\] from which it follows that, for any $z\in D(1)$, $x\in S^k\subset\R^{k+1}\subset \C^{k+1}$ and $y\in S^m\subset\R^{m+1}\subset\C^{m+1}$, \[ [z:f(z)x:c(z)y]\in Q_{k+m+2}(\sqrt{2}) \]

Conversely, given $z\in D(1)$, $x\in S^k$, $y\in S^m$, one can easily check that a point of $\C P^{k+m+2}(\sqrt{2})$ having the form $[z:ux:vy]$ where $u\in \overline{\mathbb{H}(1)}$ and $v>0$ lies on the quadric $Q_{k+m+2}(\sqrt{2})$ only if $u=f(z)$ and $v=c(z)$.

\begin{prop}\label{lqkmprop} We have \begin{equation}\label{lqkm} L^{Q}_{k,m} = \left\{[z_0:f(z_0)x:c(z_0)y]\left| z_0\in \C,\,|z_0|^2=\frac{1}{k+m+1}\left(2-\frac{1}{k+m+1}\right),\,x\in S^k,\,y\in S^m\right.\right\} \end{equation} 
\end{prop}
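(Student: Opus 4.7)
The plan is to directly compute $\Psi\circ\iota(e^{i\theta},x,y)$ for $r=1-\tfrac{1}{k+m+1}$ and rescale its homogeneous coordinates to put it in the advertised form. From the definitions of $\iota$ and $\Psi$ one has
\[
\Psi\circ\iota(e^{i\theta},x,y)=[\sqrt{1-r^2}\,:\,u\,x\,:\,v\,y],\qquad u:=-r\sin\theta+i\cos\theta,\ \ v:=r\cos\theta+i\sin\theta.
\]
Multiplying these homogeneous coordinates by the phase $\bar v/|v|$ converts the last slot into $|v|\,y$, a positive real multiple of a real unit vector, which is the natural candidate for $c(z_0)\,y$. After this rescaling the first slot becomes $z_0:=\sqrt{1-r^2}\,\bar v/|v|$, which obviously satisfies $|z_0|^2=1-r^2=\tfrac{1}{k+m+1}\bigl(2-\tfrac{1}{k+m+1}\bigr)$, matching \eqref{lqkm}.

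The crux is then to verify the two identities $u\bar v/|v|=f(z_0)$ and $|v|=c(z_0)$. For the first, a short expansion gives $u\bar v=(1-r^2)\sin\theta\cos\theta+ir$, so $u\bar v/|v|$ has positive imaginary part $r/|v|$ and therefore lies in $\overline{\mathbb{H}(1)}$. Writing $z_0=a+ib$ explicitly and using $|v|^2=r^2+(1-r^2)\sin^2\theta$, one obtains $1-b^2=r^2/|v|^2$, and then the defining formula for $f$ matches $u\bar v/|v|$ coordinate by coordinate. For the second identity, $|z_0|^2+|f(z_0)|^2=(1-r^2)+|u|^2=2-|v|^2$, so the definition of $c(z_0)$ (together with $|v|>0$) forces $c(z_0)=|v|$. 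Together these checks show that every point of $L_{k,m}^{Q}$ has the form claimed on the right-hand side of \eqref{lqkm}.

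For the reverse inclusion, as $\theta$ runs through $[0,2\pi)$, the unit complex number $\bar v/|v|=(r\cos\theta-i\sin\theta)/\sqrt{r^2\cos^2\theta+\sin^2\theta}$ parametrizes the whole unit circle (monotonically, via an elliptic reparametrization), so $z_0$ sweeps out the entire circle of radius $\sqrt{1-r^2}$. Given any such $z_0$ and any $(x,y)\in S^k\times S^m$, picking a corresponding $\theta$ and running the computation above backwards exhibits $[z_0:f(z_0)x:c(z_0)y]$ as $\Psi\circ\iota(e^{i\theta},x,y)\in L_{k,m}^{Q}$.

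The only real obstacle is the algebraic identification $u\bar v/|v|=f(z_0)$: one must correctly track the asymmetric factor of $r$ that appears both in the numerator of $\operatorname{Re} f(z_0)$ (through $-ab$) and in the simplification $\sqrt{1-b^2}=r/|v|$, so that these cancel and give the expected real part $(1-r^2)\sin\theta\cos\theta/|v|$. Everything else is bookkeeping in $\C P^{k+m+2}$.
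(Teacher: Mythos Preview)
Your argument is correct: the direct computation of $\Psi\circ\iota(e^{i\theta},x,y)$, followed by the rescaling by $\bar v/|v|$ and the verification that $u\bar v/|v|=f(z_0)$ and $|v|=c(z_0)$, cleanly establishes the forward inclusion, and the surjectivity of $\theta\mapsto \bar v/|v|$ onto the unit circle gives the reverse inclusion.  (A tiny omission: to conclude $u\bar v/|v|\in\overline{\mathbb H(1)}$ you also need $|u|\le 1$, which is immediate from $|u|^2=r^2\sin^2\theta+\cos^2\theta$.)

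The paper takes a different route.  Rather than computing, it characterizes the right-hand side of \eqref{lqkm} as the set of $[z_0:z]\in Q_{k+m+2}(\sqrt 2)$ with $|z_0|^2$ the given constant and with the upper-left $(k{+}1)\times(k{+}1)$ and lower-right $(m{+}1)\times(m{+}1)$ blocks of the $O(k+m+2)$-moment map $\mu_Q([z_0:z])=\operatorname{Im}(\bar z z^{\top})$ vanishing; it then uses that $\Psi$ pulls $\mu_Q$ back to the cotangent moment map $\mu_S(p,q)=pq^{\top}-qp^{\top}$, under which $\mathcal P_{k,m}^{r}$ has the same block-vanishing description.  Your approach is more elementary and self-contained, requiring no moment-map machinery; the paper's approach is more conceptual, explains \emph{why} the specific functions $f$ and $c$ arise (they encode the moment-map constraints), and ties the result into the $O(k+m+2)$-equivariance theme that pervades the paper.
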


\begin{proof} The subgroup $O(k+m+3)\subset U(k+m+3)$ acts naturally on $Q_{k+m+2}(\sqrt{2})$ by restriction of its action on $\C P^{k+m+2}(\sqrt{2})$, and the subgroup of $O(k+m+3)$ which preserves the hyperplane section $Q_{k+m+1}(\sqrt{2})=\{z_0=0\}$ is a copy of $O(k+m+2)$ which acts in Hamiltonian fashion on $Q_{k+m+2}(\sqrt{2})$ with moment map  $\mu_Q\co Q_{k+m+2}(\sqrt{2})\to \frak{o}(k+m+2)$  defined by 
\[ \mu_Q([z_0:z]) = Im(\bar{z}z^{\top}) \] for $z_0\in\C$ and $z\in \C^{k+m+2}$ with $|z_0|^2+\|z\|^2=2$ (where $\frak{o}(k+m+2)$ is identified with $\frak{o}(k+m+2)^*$ by the standard inner product $\langle A,B\rangle=\frac{tr(A^{\top}B)}{2}$).  From this and the remarks immediately before the proposition we see that the right hand side of (\ref{lqkm}) consists of those $[z_0:z]\in Q_{k+m+2}(\sqrt{2})$ such that $|z_0|^2=  \frac{1}{k+m+1}\left(2-\frac{1}{k+m+1}\right)$ and the upper left $(k+1)\times(k+1)$ and lower right $(m+1)\times (m+1)$ blocks of $\mu_Q([z_0:z])$  are zero.

Meanwhile the symplectic embedding $\Psi\co D_{1}^{*}S^{k+m+1}\to Q_{k+m+2}(\sqrt{2})$ pulls back $\mu_Q$ to the moment map $\mu_S$ for the natural $O(k+m+2)$-action on $T^{*}S^{k+m+1}$, which is given by $\mu_S(p,q)=pq^{\top}-qp^{\top}$, and pulls back the function $[z_0:z]\mapsto |z_0|^2$ to $(p,q)\mapsto 1-|p|^2$.  It is straightforward to see that, letting $r= 1-\frac{1}{k+m+1}$, $\mathcal{P}_{k,m}^{r}$ consists of those points $(p,q)$ such that $1-|p|^2=\frac{1}{k+m+1}\left(2-\frac{1}{k+m+1}\right)$ and the upper left $(k+1)\times(k+1)$ and lower right $(m+1)\times (m+1)$ blocks of $\mu_S(p,q)$ are zero.  So since the right hand side of (\ref{lqkm}) is contained in the image of $\Psi$ and since $\mu_S=\mu_Q\circ\Psi$, it follows that the right hand side of (\ref{lqkm}) is equal to $\Psi(\mathcal{P}_{k,m}^{1-\frac{1}{k+m+1}})$, \emph{i.e.}, to $L^{Q}_{k,m}$.
\end{proof}

\begin{prop}\label{qdisp}
For $m\geq 2$, the monotone Lagrangian submanifold $L^{Q}_{0,m}\subset Q_{m+2}(\sqrt{2})$ is displaceable.
\end{prop}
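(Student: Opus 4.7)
The plan is to realize $L^{Q}_{0,m}$ as a circle bundle over $\mathbb{S}_{0,m}$ inside the Biran disk bundle $\mathcal{D}^{m+1}_{2\pi}(P_Q)$, and to displace it by a fiberwise Hamiltonian that moves its radius-$\sqrt{2/(m+1)}$ circle off itself within each disk fiber (of area $2\pi$).

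By Proposition \ref{maincorr}, $L^{Q}_{0,m}$ equals the image under $\Theta_Q$ of the restriction to $\mathbb{S}_{0,m}\subset Q_{m+1}(\sqrt{2})$ of the radius-$\sqrt{2/(m+1)}$ circle bundle in $\mathcal{D}^{m+1}_{2\pi}(P_Q)$. Such a circle encloses area $2\pi/(m+1)$, which for $m\geq 2$ is at most $2\pi/3$---strictly less than half the area $2\pi$ of a disk fiber $D(\sqrt{2})$. I would trivialize $P_Q|_{\mathbb{S}_{0,m}}$ via the explicit global section $s\co S^m\to P_Q$ given by $s(y)=(i,y)$ (writing $\mathbb{S}_{0,m}=\{[i:y]:y\in S^m\}$ and decomposing $s(y)=u+iv$ with $u=(0,y)$ and $v=(1,0,\ldots,0)$); since each $u_0$ and each $v_j$ is constant along $s$, a direct computation with the formula $\beta=\frac{1}{4\pi}\sum_j (v_j du_j-u_j dv_j)$ from Section \ref{birquad} yields $s^\ast\beta=0$.

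The Biran symplectic form on $\mathcal{D}^{m+1}_{2\pi}(P_Q)$ makes the horizontal distribution for $\beta$ $\Omega$-orthogonal to the vertical disk-fiber distribution; consequently, using the trivialization above to identify a neighborhood of the image of $s$ with a piece of $\mathbb{S}_{0,m}\times D(\sqrt{2})$, any Hamiltonian of the form $h\circ\mathrm{pr}_{D(\sqrt{2})}$ has purely vertical Hamiltonian vector field and acts on each disk fiber as the flow of $h$. Since the circle $C=\{|\zeta|=\sqrt{2/(m+1)}\}$ bounds a region of area less than half that of $D(\sqrt{2})$, I would produce a compactly supported time-dependent Hamiltonian $h_t\co D(\sqrt{2})\to\R$ whose time-$1$ flow takes $C$ to a disjoint curve: pick a topological disk $D'\subset D(\sqrt{2})$ disjoint from $C$ and of the same area as the one bounded by $C$, use a Moser-type argument to obtain an area-preserving isotopy of $D(\sqrt{2})$ carrying $C$ to $\partial D'$, and invoke \cite[Exercise 6.1.A]{P} to realize this isotopy as a Hamiltonian flow.

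To globalize, multiply $h_t\circ\mathrm{pr}_{D(\sqrt{2})}$ by a bump function on $Q_{m+1}(\sqrt{2})$ equal to $1$ on $\mathbb{S}_{0,m}$ and compactly supported in a tubular neighborhood over which the trivialization extends, and then extend by zero across the divisor $\mathbb{S}_{0,m+1}=Q_{m+2}(\sqrt{2})\setminus\Theta_Q(\mathcal{D}^{m+1}_{2\pi}(P_Q))$. The time-$1$ flow of the resulting Hamiltonian on $Q_{m+2}(\sqrt{2})$ acts on $L^{Q}_{0,m}$ as the fiberwise application of $\phi^{h_t}_1$ and thus displaces it. The main step to double-check is the vanishing $s^\ast\beta=0$, which ensures that the trivialization is connection-compatible and hence that the fiberwise Hamiltonian really acts as $h_t$ on each fiber; alternatively, all of the hypotheses just verified are captured abstractly by Proposition \ref{gendisp}, which could be invoked directly (here $\alpha=\pi/(m+1)<\pi=\tau/2$, the area homomorphism $I_\omega\co\pi_2(Q_{m+1}(\sqrt{2}),\mathbb{S}_{0,m})\to\R$ takes values in $2\pi\Z=\tau\Z$, and the surjectivity hypothesis on $\partial$ is vacuous since $\pi_1(S^m)=0$ for $m\geq 2$).
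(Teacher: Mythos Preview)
Your approach is correct in spirit and is genuinely different from the paper's own proof.  The paper does \emph{not} argue via the Biran disk bundle: instead it uses the explicit coordinate description of $L^{Q}_{0,m}$ from Proposition~\ref{lqkmprop}, parametrizing a family of Lagrangians $G_{\gamma}(S^1\times S^m)\subset Q_{m+2}(\sqrt{2})$ by embedded loops $\gamma$ in the unit disk $D(1)\subset\C$, and then computing that the relevant loop encloses less than half of the total area of $D(1)$ with respect to a \emph{nonstandard} area form $\omega'_D=\frac{1}{1-v^2}du\wedge dv$.  Your route---trivialize $P_Q|_{\mathbb{S}_{0,m}}$ by the horizontal section $s(y)=(i,y)$, observe that the monotone circle encloses area $2\pi/(m+1)<\pi$ in each fiber, and displace---is exactly the mechanism behind Proposition~\ref{gendisp}, which the paper explicitly flags (just before Proposition~\ref{qdisp}) as an alternative proof.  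Your approach has the advantage of avoiding the $\omega'_D$-area integral; the paper's approach has the advantage of making the analogous computation for $L_{k,m}^{P}$ in Proposition~\ref{highdisp} completely parallel.

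Two small corrections.  First, in your invocation of Proposition~\ref{gendisp} the monotonicity constant is $\alpha=2\pi/(m+1)$, not $\pi/(m+1)$ (the disk fiber has area $\pi r^2 = 2\pi/(m+1)$ and Maslov index $2$, so $\alpha=\pi r^2$); the inequality $\alpha<\tau/2=\pi$ then holds precisely for $m\geq 2$, consistent with the nondisplaceability of $L^{Q}_{0,1}$.  Second, your claim that the Hamiltonian vector field of $h_t\circ\mathrm{pr}$ is ``purely vertical'' is not quite right as stated: once you extend the section to a neighborhood $U\supset\mathbb{S}_{0,m}$, the extended $\tilde{s}^*\beta$ need not vanish in the normal directions, and the Hamiltonian vector field then acquires a base component.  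However, since $\mathbb{S}_{0,m}$ is Lagrangian and $s^*\beta=0$, one checks that at points over $\mathbb{S}_{0,m}$ this base component is \emph{tangent} to $\mathbb{S}_{0,m}$ and the fiber component equals $X_{h_t}$; hence the flow still preserves $\mathcal{D}|_{\mathbb{S}_{0,m}}$ and acts as $\phi^{h_t}_1$ on the fiber coordinate, which is all you need.  The cleaner packaging of this step, as in Section~\ref{criterion}, is to note that the map $g\co\mathbb{S}_{0,m}\times D(\sqrt{2})\to\mathcal{D}_{2\pi}^{m+1}(P_Q)$ induced by $s$ satisfies $g^*\Omega=\omega_\C$, so an area-preserving isotopy of the circle in $D(\sqrt{2})$ gives an \emph{exact Lagrangian} isotopy, which one then extends to an ambient Hamiltonian isotopy by \cite[Exercise 6.1.A]{P}; this avoids the bump-function construction entirely.
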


\begin{remark}
Combining Theorem \ref{mains2s2} with an appropriate symplectomorphism $Q_{3}(\sqrt{2})\to S^2\times S^2$ allows one to map $L^{Q}_{0,1}$ to the Chekanov--Schlenk twist torus in $S^2\times S^2$. Thus by \cite[Theorem 2]{CS}, $L^{Q}_{0,1}$ is, unlike all of the other $L^{Q}_{0,m}$, nondisplaceable.
\end{remark}

\begin{proof} For any embedded loop $\gamma\co S^1\to D(1)$ consider the map $G_{\gamma}\co S^1\times S^m\to Q_{m+2}(\sqrt{2})$ defined by \[ G_{\gamma}(e^{i\theta},y) = [\gamma(e^{i\theta}):f(\gamma(e^{i\theta})):c(\gamma(e^{i\theta}))y] \] 
where $f$ and $c$ are as defined just above Proposition \ref{lqkmprop}.  Since $f$ has image in the open upper half-plane, one sees easily that $G_{\gamma}$ is a Lagrangian embedding.  

For a (not necessarily embedded) cylinder $\Gamma\co [0,1]\times S^1\to D(1)$, let us likewise define $\tilde{G}_{\Gamma}\co [0,1]\times S^1\times S^m\to Q_{m+2}(\sqrt{2})$ by $\tilde{G}_{\gamma}(s,e^{i\theta},y) = G_{\Gamma(s,\cdot)}(e^{i\theta},y)$.  We see that, where $\omega_Q$ is the symplectic form on   $Q_{m+2}(\sqrt{2})$, the pullback $\tilde{G}_{\Gamma}^{*}\omega_Q$ is independent of the $S^m$ factor and restricts to each $[0,1]\times S^1\times \{y\}$ as the pullback $\Gamma^{*}\omega'_{D}$ of the nonstandard symplectic form \[ \omega'_D=du\wedge dv+f^*du\wedge dv = \frac{1}{1-v^2}du\wedge dv \] where $u,v$ are the standard real coordinates on $D(1)\subset \C$.  In particular $\tilde{G}_{\Gamma}$ defines an \emph{exact} Lagrangian isotopy from $G_{\Gamma(0,\cdot)}$ to $G_{\Gamma(1,\cdot)}$   if and only if $\Gamma\co  [0,1]\times S^1\to D(1)$ is an exact Lagrangian isotopy in the symplectic manifold $(D(1),\omega'_{D})$, \emph{i.e.}, if and only if the $\omega'_D$-area enclosed by the image of $\Gamma(s,\cdot)$ is independent of $s$.

In particular, if an embedded loop $\gamma\co S^1\to D(1)$ encloses a region having less than one-half of the $\omega'_D$-area of $D(1)$, then  the corresponding Lagrangian submanifold $G_{\gamma}(S^1\times S^m)$ will be displaceable.  Indeed, we can take a smooth isotopy $\Gamma\co [0,1]\times S^1\to D(1)$ such that the $\omega'_D$-area enclosed by $\Gamma(\{s\}\times S^1)$ is independent of $s$, such that $\Gamma(0,\cdot)=\gamma$, and such that $\Gamma(\{1\}\times S^1)$ is disjoint from $\gamma(S^1)$.  The corresponding Lagrangian isotopy $\tilde{G}_{\Gamma}$ will then be an exact Lagrangian isotopy which disjoins $G_{\gamma}(S^1\times S^m)$ from itself, and this exact Lagrangian isotopy may be extended to a Hamiltonian isotopy of $Q_{m+2}(\sqrt{2})$ by \cite[Exercise 6.1.A]{P}.

Now it follows directly from the $k=0$ case of Proposition \ref{lqkmprop} that $L^{Q}_{0,m}$ is the image of a map $G_{\gamma}$ as above where $\gamma$ has image equal to the circle of radius $\sqrt{\frac{1}{m+1}\left(2-\frac{1}{m+1}\right)}$ in $D(1)$.  Now the $\omega'_D$-area of the disk of radius $a$ is, for $0<a<1$, given by  \begin{align*} \int_{0}^{a}\int_{0}^{2\pi}\frac{r}{1-r^2\sin^2\theta}d\theta dr = \int_{0}^{a}\frac{2\pi r}{\sqrt{1-r^2}} dr = 2\pi\left(1-\sqrt{1-a^2}\right) \end{align*}  (For the first equality, use that, by symmetry,\[ \int_{0}^{2\pi}\frac{1}{1-r^2\sin^2\theta}d\theta=4\int_{0}^{\pi/2}\frac{1}{1-r^2\sin^2\theta}d\theta \] and  observe that on the interval $[0,\pi/2)$, $\theta\mapsto \frac{1}{1-r^2\sin^2\theta}$ has antiderivative $\theta\mapsto \frac{\tan^{-1}(\sqrt{1-r^2}\tan\theta)}{\sqrt{1-r^2}}$.)

In particular if $a<\frac{\sqrt{3}}{2}$ (for instance, if $a=\sqrt{\frac{1}{m+1}\left(2-\frac{1}{m+1}\right)}$ where $m\geq 2$) then the disk of radius $a$ has strictly less than half of the $\omega'_D$-area of $D(1)$.  Thus the above argument shows that $L^{Q}_{0,m}$ is displaceable for $m\geq 2$.

\end{proof}

\begin{remark} The proof of the above proposition does not apply to the Lagrangian submanifolds $L^{Q}_{k,m}$ with $k>0$.  Imitating the proof, one could consider Lagrangian immersions $G_{\gamma}\co S^1\times S^k\times S^m\to Q_{k+m+2}(\sqrt{2})$ defined by \[ G_{\gamma}(e^{i\theta},x,y) = [\gamma(e^{i\theta}):f(\gamma(e^{i\theta}))x:c(\gamma(e^{i\theta}))y] \] where $\gamma\co S^1\to D(1)$ is an embedded curve.  Such immersions are not generally embeddings, since (bearing in mind that $f(z)=f(-z)$ and $c(z)=c(-z)$ for all $z$)  if $\gamma(e^{i\theta_1})=-\gamma(e^{i\theta_2})$ then $G_{\gamma}(e^{i\theta_1},x,y)=G_{\gamma}(e^{i\theta_2},-x,-y)$ for all $x\in S^k, y\in S^m$.  In particular if $\gamma_0$ is the circle of radius $\sqrt{\frac{1}{k+m+1}\left(2-\frac{1}{k+m+1}\right)}$ around the origin then $G_{\gamma_0}$ is a two-to-one cover of $L^{Q}_{k,m}$.  To use the method of the proof to displace $L^{Q}_{k,m}$ for $k>0$, one would need to isotope this circle off of itself passing only through loops which bound the same $\omega'_D$-area \emph{and} are symmetric about $0$, which of course is impossible. Relatedly,  in terms of the general condition for displaceability discussed in Section \ref{criterion}, it can be shown that the area homomorphism on $\pi_2(Q_{k+m+1}(\sqrt{2}),\mathbb{S}_{k,m})$ has image $\tau\Z$ when $k=0$ but $\frac{\tau}{2}\Z$ when $k>0$, so that in the latter case the connection used in the Biran circle bundle construction has nontrivial holonomy.

If we instead choose the loop $\gamma$ to be contained in the upper-half disk $\mathbb{H}(1)$, so that in particular $\gamma$ does not pass through any pair of antipodal points, then $G_{\gamma}\co S^1\times S^k\times S^m\to Q_{k+m+2}(\sqrt{2})$ will be an embedding, which is monotone if and only if $\gamma$ bounds $\omega'_D$-area $\frac{2\pi}{k+m+1}$.  If the area enclosed by $\gamma$ is less than one-half of the $\omega'_D$-area of $\mathbb{H}(1)$ (\emph{i.e.}, less than $\frac{\pi}{2}$), then the same method used in the proof of Proposition \ref{qdisp} shows that $G_{\gamma}(S^1\times S^k\times S^m)$ is displaceable.  Thus for $k+m\geq 4$ we obtain a monotone Lagrangian $S^1\times S^k\times S^m$ in $Q_{k+m+2}(\sqrt{2})$ which is displaceable (though, unlike $L^{Q}_{k,m}$, this does not have a direct relationship to the submanifold $\mathcal{P}_{k,m}^{r}\subset T^*S^{k+m+1}$).
\end{remark}

\section{Lagrangian submanifolds of $\mathbb{C}P^n$}\label{cpnsubs}

The submanifolds $\mathcal{P}_{k,m}^{r}\subset T^{*}S^{k+m+1}$ defined in Section \ref{afintro} are invariant under the antipodal involution of $T^*S^{k+m+1}$ and hence descend to submanifolds $\underline{\mathcal{P}}_{k,m}^{r}\subset T^{*}\R P^{k+m+1}$.  The composition of the previously-defined $2$-to-$1$ immersion $\iota\co S^1\times S^k\times S^m\to T^{*}S^{k+m+1}$ having image $ \mathcal{P}_{k,m}^{r}$ with the projection $T^{*}S^{k+m+1}\to T^{*}\R P^{k+m+1}$ identifies $\underline{\mathcal{P}}_{k,m}^{r}$ with up to diffeomorphism with the quotient \[ \frac{S^1\times S^{k}\times S^{m}}{(e^{i\theta},x,y)\sim (-e^{i\theta},-x,-y)\sim (e^{i\theta},-x,-y)}\cong \R P^{1}\times \left(\frac{S^k\times S^m}{(x,y)\sim (-x,-y)}\right) \]  Thus $\underline{\mathcal{P}}_{k,m}^{r}$ is diffeomorphic to $S^1\times\left(\frac{S^k\times S^m}{\Z/2\Z}\right)$ where $\Z/2\Z$ acts by the antipodal map on both factors.  In particular $\underline{\mathcal{P}}_{0,m}^{r}$ is diffeomorphic to $S^1\times S^m$.

\begin{prop}
For all $k,m,r$, $\underline{\mathcal{P}}_{k,m}^{r}$ is a monotone, nondisplaceable Lagrangian submanifold of $T^{*}\R P^{k+m+1}$.
\end{prop}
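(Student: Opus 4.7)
The plan is to deduce both statements directly from Proposition~\ref{pnd} via the double cover $\pi\co T^*S^{k+m+1}\to T^*\R P^{k+m+1}$, using only the fact that $\pi$ is a local symplectomorphism with $\pi^{-1}(\underline{\mathcal{P}}_{k,m}^r)=\mathcal{P}_{k,m}^r$. Lagrangianity of $\underline{\mathcal{P}}_{k,m}^r$ is then immediate, so the content lies in transferring monotonicity and nondisplaceability.

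For monotonicity, I would lift relative homotopy classes disk-by-disk. Any map $u\co (D^2,\partial D^2)\to (T^*\R P^{k+m+1},\underline{\mathcal{P}}_{k,m}^r)$ admits a continuous lift $\tilde u\co D^2\to T^*S^{k+m+1}$ because $D^2$ is simply connected, and $\tilde u(\partial D^2)\subset \pi^{-1}(\underline{\mathcal{P}}_{k,m}^r)=\mathcal{P}_{k,m}^r$ automatically. Since $\pi$ is a local symplectomorphism whose differential carries Lagrangian subspaces to Lagrangian subspaces, both the symplectic area and the Maslov index of $u$ coincide with those of $\tilde u$. Hence the proportionality $I_\omega=\lambda I_\mu$ established on $\pi_2(T^*S^{k+m+1},\mathcal{P}_{k,m}^r)$ in Proposition~\ref{pnd} descends to the same proportionality on $\pi_2(T^*\R P^{k+m+1},\underline{\mathcal{P}}_{k,m}^r)$ with the same constant $\lambda$.

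For nondisplaceability, I would lift a hypothetical displacing Hamiltonian and derive a contradiction. Suppose $H_t$ is a compactly supported Hamiltonian on $T^*\R P^{k+m+1}$ whose time-one flow $\phi_1$ satisfies $\phi_1(\underline{\mathcal{P}}_{k,m}^r)\cap \underline{\mathcal{P}}_{k,m}^r=\varnothing$. Then $\tilde H_t:=H_t\circ \pi$ is a compactly supported Hamiltonian on $T^*S^{k+m+1}$ whose time-one flow $\tilde\phi_1$ satisfies $\pi\circ \tilde\phi_1=\phi_1\circ \pi$, so $\tilde\phi_1(\mathcal{P}_{k,m}^r)=\pi^{-1}\bigl(\phi_1(\underline{\mathcal{P}}_{k,m}^r)\bigr)$, which is disjoint from $\pi^{-1}(\underline{\mathcal{P}}_{k,m}^r)=\mathcal{P}_{k,m}^r$, contradicting Proposition~\ref{pnd}.

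The only things to check—routine but worth stating—are that the lift of a Hamiltonian flow is indeed the Hamiltonian flow of the pulled-back Hamiltonian, and that the standard definitions of symplectic area and Maslov index transfer unambiguously under a local symplectomorphism; both follow from $\pi^*\omega_{T^*\R P^{k+m+1}}=\omega_{T^*S^{k+m+1}}$ and the fact that $d\pi$ is a pointwise symplectic isomorphism. No Floer-theoretic input beyond that already invoked for $\mathcal{P}_{k,m}^r$ is required, so there is no substantial obstacle: the whole result is essentially a formal consequence of Proposition~\ref{pnd} together with the observation that $\mathcal{P}_{k,m}^r$ is precisely the full $\pi$-preimage of $\underline{\mathcal{P}}_{k,m}^r$.
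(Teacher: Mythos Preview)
Your proof is correct and follows essentially the same strategy as the paper: both parts are deduced from Proposition~\ref{pnd} via the isosymplectic double cover $\pi$, with nondisplaceability handled by pulling back a hypothetical displacing Hamiltonian exactly as you do. The only minor difference is in the monotonicity step: the paper pushes disks \emph{forward} along $\pi_*$ and notes that the image has finite index in $\pi_2(T^*\R P^{k+m+1},\underline{\mathcal{P}}_{k,m}^r)$, whereas you lift disks \emph{upward} using simple connectivity of $D^2$, which directly handles every relative class without the finite-index remark---a slightly cleaner formulation of the same idea.
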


\begin{proof} Since the quotient projection $\pi\co T^{*}S^{k+m+1}\to T^{*}\R P^{k+m+1}$ is isosymplectic (\emph{i.e.}, pulls back the standard symplectic form on $T^{*}\R P^{k+m+1}$ to the standard symplectic form on $T^{*}S^{k+m+1}$) and $\mathcal{P}_{k,m}^{r}=\pi^{-1}(\underline{\mathcal{P}}_{k,m}^{r})$, the proposition follows rather quickly from the corresponding properties of $\mathcal{P}_{k,m}^{r}$ from Proposition \ref{pnd}.  Indeed, if the Hamiltonian flow of some function $H\co [0,1]\times T^{*}\R P^{k+m+1}\to \R$ were to disjoin $\underline{\mathcal{P}}_{k,m}^{r}$ from itself, then it is easy to check that the Hamiltonian flow of the function $(t,x)\mapsto H(t,\pi(x))$ on $[0,1]\times T^{*}S^{k+m+1}$ would disjoin $\mathcal{P}_{k,m}^{r}$ from itself, contradicting Proposition \ref{pnd}.  Thus $\underline{\mathcal{P}}_{k,m}^{r}$ is nondisplaceable.

As for monotonicity, the quotient projection $\pi$ induces a map $\pi_*\co \pi_2(T^{*}S^{k+m+1},\mathcal{P}_{k,m}^{r})\to \pi_2(T^{*}\R P^{k+m+1},\underline{\mathcal{P}}_{k,m}^{r})$ whose image is an index-two subgroup of $\pi_2(T^{*}\R P^{k+m+1},\underline{\mathcal{P}}_{k,m}^{r})$.  Since $\pi$ is isosymplectic, the map $\pi_*$ intertwines the respective area and Maslov homomorphisms on  $\pi_2(T^{*}S^{k+m+1},\mathcal{P}_{k,m}^{r})$ and $\pi_2(T^{*}\R P^{k+m+1},\underline{\mathcal{P}}_{k,m}^{r})$.  So the fact that (by Proposition \ref{pnd}) the area and Maslov homomorphisms are proportional on $\pi_2(T^{*}S^{k+m+1},\mathcal{P}_{k,m}^{r})$ directly implies the corresponding fact on $\pi_2(T^{*}\R P^{k+m+1},\underline{\mathcal{P}}_{k,m}^{r})$.\end{proof}

Where $f\co [0,1)\to \R$ is defined by $f(x)=\frac{1-\sqrt{1-x^2}}{x^2}$ (smoothly extended to take the value $\frac{1}{2}$ at $0$), the map $\Psi^P\co D^{*}_{1}\R P^{k+m+1}\to \C P^{k+m+1}(\sqrt{2})$ defined by \[ \Psi^{P}([(p,q)]) = \left[ \sqrt{f(|p|)}p+\frac{i}{\sqrt{f(|p|)}}q\right] \]has been shown in Lemma \ref{rpncpn} to be a symplectomorphism to its image, which is the complement $\C P^{k+m+1}(\sqrt{2})\setminus Q_{k+m+1}(\sqrt{2})$ of the quadric.  $\Psi^P$ is manifestly equivariant with respect to the natural $O(k+m+2)$-actions on domain and range, and pulls back the moment map for the $O(k+m+2)$ action on $\C P^{k+m+1}(\sqrt{2})$ (namely $\Phi^C\co [z]\mapsto Im(\bar{z}z^{\top})$) to the moment map for the $O(k+m+2)$-action on $D^{*}_{1}\R P^{k+m+1}$ (namely $\Phi^R\co [(p,q)]\mapsto pq^{\top}-qp^{\top}$).  The norms of these moment maps (with respect to the standard inner product $\langle A,B\rangle=\frac{1}{2}A^{\top}B$ on $\frak{o}(k+m+2)$) are given, respectively, by 

\[ \|\Phi^C([z])\| =\frac{1}{2}\sqrt{\|z\|^4-\left|\sum z_{j}^{2}\right|^2}=\frac{1}{2}\sqrt{4- \left|\sum z_{j}^{2}\right|^2}\qquad \mbox{ and } \|\Phi^R([(p,q)])\| = \|p\| \]

In particular, since the Hamiltonian $\frac{1}{2}\|\Phi^R\|=\frac{\|p\|}{2}$ generates an effective $S^1$-action on $D_{1}^{*}\R P^{k+m+1}\setminus 0_{\R P^{k+m+1}}$ (given by the unit speed geodesic flow) and since $\Psi^P$ pulls back $\|\Phi^C\|$ to $\|\Phi^P\|$, we infer that $\frac{1}{2}\|\Phi^C\|$ generates an effective $S^1$-action on $\C P^{k+m+1}(\sqrt{2})\setminus \R P^{k+m+1}$, as can also be verified by direct computation.

For $0<r<1$, the Lagrangian submanifold $\underline{\mathcal{P}}_{k,m}^{r}$ consists of those points $[(p,q)]$ of $D_{1}^{*}\R P^{k+m+1}$ such that the upper left $(k+1)\times (k+1)$ and lower right $(m+1)\times (m+1)$ blocks of $\Phi^R([(p,q)])$ are zero, and such that $\|\Phi^R([(p,q)])\| = r$.  Consequently the Lagrangian submanifold $\Psi^P(\underline{\mathcal{P}}_{k,m}^{r})\subset \C P^{k+m+1}(\sqrt{2})$ consists of those points $[z]\in \C P^{k+m+1}$ such that 
the upper left $(k+1)\times (k+1)$ and lower right $(m+1)\times (m+1)$ blocks of $\Phi^C([z])=Im(\bar{z}z^{\top})$ are zero and $\|\Phi^C([z])\|=r$.  Writing $z\in \C^{k+m+2}$ and $z=(z_1,z_2)$ where $z_1\in \C^{k+1}$ and $z_2\in \C^{m+1}$, the first of these conditions amounts to the statement that $Im(\bar{z}_1z_{1}^{\top})=0$ and $Im(\bar{z}_2 z_{2}^{\top})=0$, or equivalently that, for some $x\in S^k\subset \R^{k+1}$, $y\in S^m\subset \R^{m+1}$, and $v,w\in\C$ with $|v|^2+|w|^2=2$ we have $z_1=vx$ and $z_2=wy$.  By modifying $z=(z_1,z_2)$ within its equivalence class under the Hopf projection we may as well assume that $w$ is a nonnegative real number (and hence that $w=\sqrt{2-|v|^2}$), and then by possibly switching $x$ to $-x$ we may assume that $v$ lies in the closed upper half-disk $\overline{\mathbb{H}(\sqrt{2})}$ of radius $\sqrt{2}$.  So  $\Psi^P(\underline{\mathcal{P}}_{k,m}^{r})$ is contained in the set of points of form $[(vx,\sqrt{2-|v|^2}y)]$ where $x\in S^k,y\in S^m$, and $v\in\overline{\mathbb{H}(2)}$. 

Such a point in fact lies in $\Psi^P(\underline{\mathcal{P}}_{k,m}^{r})$ if and only if $\|\Phi^C([z])\|=r$.  Given our earlier formula for $\|\Phi^C\|$ this holds if and only if $\left|v^2+2-|v|^2\right|^2=4(1-r^2)$, or equivalently, writing $v=a+ib$, $(1-b^2)^2+a^2b^2=1-r^2$. This equation for $v$ in particular forces $v$ to be in the interior $\mathbb{H}(\sqrt{2})$ of the closed upper half-disk, which implies that the point's representation in the form $[vx:y]$ for $v\in\overline{\mathbb{H}(\sqrt{2})}$ is unique. Thus:

\begin{prop}\label{cpnchar} For $0<r<1$, \[ \Psi^P(\underline{\mathcal{P}}_{k,m}^{r}) = \left\{\left[vx:\sqrt{2-|v|^2}y \right]\left|v\in\mathbb{H}(\sqrt{2}), \left|v^2+2-|v|^2\right|^2=4(1-r^2),\,x\in S^k,y\in S^m  \right.\right\}\]
\end{prop}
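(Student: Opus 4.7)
The plan is to reorganize the moment-map observations made immediately before the proposition into a clean three-step argument. The key insight, already established, is that $\Psi^P$ intertwines the natural $O(k+m+2)$-moment maps on domain and codomain, so the Lagrangian submanifold $\Psi^P(\underline{\mathcal{P}}_{k,m}^{r})$ is cut out by applying to $\Phi^C$ exactly those conditions that cut out $\underline{\mathcal{P}}_{k,m}^{r}$ using $\Phi^R$.

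First, I would explicitly verify the characterization of $\underline{\mathcal{P}}_{k,m}^{r}$ as the locus of $[(p,q)] \in D_{1}^{*}\mathbb{R}P^{k+m+1}$ such that $\|\Phi^{R}([(p,q)])\| = r$ and both the upper-left $(k+1) \times (k+1)$ block and the lower-right $(m+1) \times (m+1)$ block of $\Phi^R([(p,q)]) = pq^\top - qp^\top$ vanish. This is an immediate computation from the parametrization $\iota(e^{i\theta},x,y) = ((-r\sin\theta\,x,\,r\cos\theta\,y),(\cos\theta\,x,\,\sin\theta\,y))$: the blocks involving only the $S^k$-factor or only the $S^m$-factor are proportional to $xx^\top - xx^\top = 0$ (respectively $yy^\top - yy^\top = 0$), while the norm $\|p\|$ equals $r$.

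Second, using $\Phi^C \circ \Psi^P = \Phi^R$ together with $O(k+m+2)$-equivariance of $\Psi^P$, I transport these conditions to the image: $\Psi^P(\underline{\mathcal{P}}_{k,m}^{r})$ is precisely the set of $[z] \in \mathbb{C}P^{k+m+1}(\sqrt{2}) \setminus \mathbb{R}P^{k+m+1}$ with $\|\Phi^C([z])\| = r$ and with vanishing upper-left/lower-right blocks of $\mathrm{Im}(\bar z z^\top)$. Splitting $z = (z_1, z_2) \in \mathbb{C}^{k+1} \oplus \mathbb{C}^{m+1}$, the block vanishing is equivalent to $\mathrm{Im}(\bar z_i z_i^\top) = 0$ for $i = 1,2$, which in turn forces each $z_i$ to be a complex scalar multiple of a real vector; writing $z_1 = v'x$, $z_2 = w'y$ with $x \in S^k$, $y \in S^m$ and using the Hopf $U(1)$ to rotate $w'$ into $[0,\infty)$ (so $w' = \sqrt{2 - |v'|^2}$) and then switching the sign of $x$ if necessary to place $v'$ in the closed upper half-disk $\overline{\mathbb{H}(\sqrt{2})}$, I obtain the parametric form $[vx : \sqrt{2-|v|^2}\,y]$.

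Finally, the norm condition is computed directly: since $\sum z_j^2 = v^2 + (2 - |v|^2)$ for such a point, the formula $\|\Phi^C([z])\| = \tfrac{1}{2}\sqrt{4 - |\sum z_j^2|^2}$ translates $\|\Phi^C([z])\| = r$ into $|v^2 + 2 - |v|^2|^2 = 4(1-r^2)$. For $0 < r < 1$ this inequality $|v^2 + 2 - |v|^2|^2 < 4$ forces $v$ to lie in the open upper half-disk $\mathbb{H}(\sqrt{2})$ rather than on its real boundary (where $v^2 + 2 - |v|^2 = 2$), and the representative of the point in the form $[vx : \sqrt{2-|v|^2}\,y]$ with $v \in \mathbb{H}(\sqrt{2})$ is then unique. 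There is no genuine obstacle here: each step is essentially algebraic bookkeeping, and the only delicate point is keeping track of the Hopf-fiber and sign ambiguities to land uniquely in $\mathbb{H}(\sqrt{2})$.
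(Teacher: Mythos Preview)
Your proposal is correct and follows essentially the same argument as the paper, which presents the proof as the discussion immediately preceding the proposition: characterize $\underline{\mathcal{P}}_{k,m}^{r}$ via the block-vanishing and norm conditions on the $O(k+m+2)$-moment map, transport these through the equivariant embedding $\Psi^P$, and then normalize the Hopf representative to land in $\mathbb{H}(\sqrt{2})$. One small slip: in your second step you write $[z]\in\C P^{k+m+1}(\sqrt{2})\setminus\R P^{k+m+1}$, whereas the image of $\Psi^P$ is $\C P^{k+m+1}(\sqrt{2})\setminus Q_{k+m+1}(\sqrt{2})$; however this is harmless, since the condition $\|\Phi^C\|=r$ with $0<r<1$ automatically excludes both $Q_{k+m+1}(\sqrt{2})$ (where $\|\Phi^C\|=1$) and $\R P^{k+m+1}$ (where $\|\Phi^C\|=0$).
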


As in Proposition \ref{maincorr} we consider the Lagrangian submanifold \[ \mathbb{S}_{k,m}=\{[ix:y]|x\in S^k,\,y\in S^m\}\subset Q_{k+m+1}(\sqrt{2}) \] of the hypersurface $Q_{k+m+1}(\sqrt{2})$.  Recall from Section \ref{bircpn} that where \[ P_P=\frac{\{w\in \C^{k+m+2}|\sum w_{j}^{2}=0,\,\sum |w_j|^2=2\}}{w\sim -w} \] and \[ \mathcal{D}_{\pi}^{k+m+1}(P_P)=\frac{P_P\times D(1)}{([e^{i\theta/2}w],e^{i\theta}\zeta)\sim ([w],\zeta)} \] we have a disk bundle projection $\mathcal{D}_{\pi}^{k+m+1}(P_P)\to Q_{k+m+1}(\sqrt{2})$ given by $[([w],\zeta)]\mapsto [w]$ and a symplectic tubular neighborhood $\Theta_P\co \mathcal{D}_{\pi}(P_P)\to \mathbb{C}P^{k+m+1}(\sqrt{2})$ defined by \[ \Theta_P([([w],\zeta)]) = \left[\sqrt{1-|\zeta|^2/2}w-\zeta\bar{w}/\sqrt{2}\right]  \]  

Similarly to Proposition \ref{maincorr}, we have:

\begin{prop}\label{cpncorr} The Lagrangian submanifold $\Psi^P(\underline{\mathcal{P}}_{k,m}^{r})$ is equal to the image under $\Theta_P$ of the restriction of the radius-$\sqrt{1-r}$ circle bundle in $\mathcal{D}_{\pi}^{k+m+1}(P_P) $ to $\mathbb{S}_{k,m}\subset Q_{k+m+1}(\sqrt{2})$.
\end{prop}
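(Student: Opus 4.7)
The plan is to imitate the proof of Proposition \ref{maincorr}: for each $(e^{i\theta}, x, y) \in S^1 \times S^k \times S^m$, I will produce an explicit $\zeta(\theta) \in \C$ with $|\zeta(\theta)|^2 = 1 - r$ such that $\Psi^P([\iota(e^{i\theta}, x, y)])$ and $\Theta_P([([ix + y], \zeta(\theta))])$ agree as points of $\C P^{k+m+1}(\sqrt{2})$. To begin, I would compute representatives of both sides as block elements of $\C^{k+1} \oplus \C^{m+1}$. Applying the formula for $\Theta_P$ from Section \ref{bircpn} with $w = ix + y$ (so $\bar{w} = -ix + y$) gives
\[
\Theta_P([([w], \zeta)]) = \bigl[\,i\bigl(\sqrt{(1+r)/2} + \zeta/\sqrt{2}\bigr)x \,:\, \bigl(\sqrt{(1+r)/2} - \zeta/\sqrt{2}\bigr)y\,\bigr].
\]
On the other side, $\iota(e^{i\theta}, x, y)$ has $|p| = r$; setting $\alpha = \sqrt{f(r)}$ and rescaling the $\Psi^P$-representative by the projective factor $\alpha$, one gets
\[
\Psi^P([\iota(e^{i\theta}, x, y)]) = \bigl[\,(-\alpha^2 r \sin\theta + i\cos\theta)x \,:\, (\alpha^2 r \cos\theta + i \sin\theta)y\,\bigr].
\]

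Next I would solve for $\lambda \in \C^{*}$ and $\zeta \in \C$ making the $\Theta_P$-representative equal to $\lambda$ times the $\Psi^P$-representative. Equating the $x$- and $y$-blocks yields two complex linear equations; after a multiplication by $i$ to align phases, their sum and difference collapse neatly into multiples of $(1+\alpha^2 r)e^{i\theta}$ and $(1-\alpha^2 r)e^{-i\theta}$ respectively, giving
\[
\lambda = \frac{\sqrt{2(1+r)}}{1 + \alpha^2 r}\, e^{-i\theta}, \qquad \zeta(\theta) = \frac{(1 - \alpha^2 r)\sqrt{1+r}}{1 + \alpha^2 r}\, e^{-2i\theta}.
\]

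The critical remaining check is that $|\zeta(\theta)|^2 = 1 - r$. This is equivalent to $(1 - \alpha^2 r)^2(1+r) = (1 + \alpha^2 r)^2(1-r)$, which on expansion reduces to the single identity $\alpha^4 r^2 - 2\alpha^2 + 1 = 0$. But that is precisely identity \eqref{fidentity} evaluated at $x = r$ and multiplied through by $\alpha^2 = f(r)$, so it holds automatically. With $|\zeta(\theta)|^2 = 1-r$ established, the map $\theta \mapsto \zeta(\theta)$ is a double cover of the circle $|\zeta| = \sqrt{1-r}$, which is compatible with the $\Z/2\Z$ quotient built into $\underline{\mathcal{P}}_{k,m}^r$. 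Combined with the freedom in $(x, y)$ (which itself doubly covers $\mathbb{S}_{k,m}$ via $(x,y) \mapsto [ix:y]$), this shows $\Psi^P(\underline{\mathcal{P}}_{k,m}^r)$ equals the $\Theta_P$-image of the radius-$\sqrt{1-r}$ circle bundle over $\mathbb{S}_{k,m}$. The main obstacle is recognizing the algebraic identity appearing in the modulus computation as equation \eqref{fidentity}; once this is spotted, the remainder is routine bookkeeping, strictly parallel to the argument for Proposition \ref{maincorr}.
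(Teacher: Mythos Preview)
Your argument is correct and the key algebraic step (reducing $|\zeta(\theta)|^2=1-r$ to the identity \eqref{fidentity}) checks out. However, your route differs from the paper's. You parametrize $\Psi^P(\underline{\mathcal{P}}_{k,m}^{r})$ by $\iota$ and solve explicitly for $(\lambda,\zeta)$, which forces you to carry the function $f(r)$ through the computation and then recognize \eqref{fidentity} at the end. The paper instead exploits the moment-map characterization developed just before Proposition~\ref{cpnchar}: a point $[vx:\sqrt{2-|v|^2}y]$ lies in $\Psi^P(\underline{\mathcal{P}}_{k,m}^{r})$ iff $\|\Phi^C\|=r$. Since for $w=ix+y\in P_Q$ one has $\sum w_j^2=\sum\bar{w}_j^2=0$ and $\sum|w_j|^2=2$, the quantity $\sum z_j^2$ for $z=\sqrt{1-|\zeta|^2/2}\,w-\zeta\bar{w}/\sqrt{2}$ collapses to a single cross term, giving $\|\Phi^C\|=1-|\zeta|^2$ with no reference to $f$. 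One inclusion follows, and equality is then forced because both sides are closed connected submanifolds of the same dimension. Your approach has the virtue of producing the explicit fiber correspondence $\theta\mapsto\zeta(\theta)$ (and is the more faithful analogue of the proof of Proposition~\ref{maincorr}); the paper's approach is shorter and sidesteps the $f$-algebra entirely. One presentational point: you substitute $\sqrt{(1+r)/2}$ for $\sqrt{1-|\zeta|^2/2}$ before having established $|\zeta|^2=1-r$, which reads circularly; it would be cleaner to leave that coefficient symbolic until after you have solved for $\zeta$ and verified its modulus.
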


\begin{remark} The case $(k,m)=(0,1)$ of Proposition \ref{cpncorr} proves the final equivalence in Theorem \ref{maincp2}, since $\Psi^P(\underline{\mathcal{P}}_{0,1}^{1/3}) = L_{0,1}^{P}$ and the image of the radius-$\sqrt{2/3}$ circle bundle over $\mathbb{S}_{0,1}$ under $\Theta_P$ is $T_{BC}^{P}$.\end{remark}

\begin{proof} The restriction of the radius-$\sqrt{1-r}$ circle bundle in $\mathcal{D}_{\pi}^{k+m+1}(P_P)$ to $\mathbb{S}_{k,m}$ consists of points of the form $\left[\left(\left[\left(ix,y\right)\right],\zeta\right)\right]$ where $x\in S^k$, $y\in S^m$, and $|\zeta|^2=1-r$.  Such a point is mapped by $\Theta_P$ to $[(\sqrt{1-|\zeta|^2/2}+\zeta/\sqrt{2})ix:(\sqrt{1-|\zeta|^2/2}-\zeta/\sqrt{2})y]\in \C P^{k+m+1}(\sqrt{2})$.  In particular (by multiplying the above expression in brackets by an appropriate phase), we see that for any $x\in S^k$, $y\in S^m$, and $\zeta\in D(1)$,   $\Theta_P\left(\left[\left(\left[\left(ix,y\right)\right],\zeta\right)\right]\right)$ may be expressed as $[vx:\sqrt{2-|v|^2}y]$ for some $v\in \overline{\mathbb{H}(\sqrt{2})}$.  Now  \[
\left\|\Phi^C\left(\left[\left(\left[\left(ix,y\right)\right],\zeta\right)\right]\right)\right\| = \frac{1}{2}\sqrt{4-\left|-\zeta\sqrt{2}\left(2\sqrt{1-\frac{|\zeta|^2}{2}}\right)\right|^2}\]
\[  =\frac{1}{2}\sqrt{4-8|\zeta|^2+4|\zeta|^4} = 1-|\zeta|^2 \]
So since $\Psi^P(\underline{\mathcal{P}}_{k,m}^{r})$ consists of points of form $[vx:\sqrt{2-|v|^2}y]$ such that $\|\Phi^C([vx:\sqrt{2-|v|^2}y])\|=r$, we see that $\Theta_P\left(\left[\left(\left[\left(ix,y\right)\right],\zeta\right)\right]\right)\in \Psi^P(\underline{\mathcal{P}}_{k,m}^{r})$ precisely when $|\zeta|=\sqrt{1-r}$

This proves that the image under $\Theta_P$ of the radius-$\sqrt{1-r}$ circle bundle over $\mathbb{S}_{k,m}$ is contained in $\Psi^P(\underline{\mathcal{P}}_{k,m}^{r})$.  Since $\Theta_P$ is an embedding and since the two manifolds in question are closed, connected, and of the same dimension this suffices to prove the proposition.\
\end{proof}

Likewise we have an analogue of Proposition \ref{qmon}:

\begin{prop} \label{pmon}  The Lagrangian submanifold $\Psi^P(\underline{\mathcal{P}}_{k,m}^{r})$ is monotone if and only if $r=1-\frac{2}{k+m+2}$.
\end{prop}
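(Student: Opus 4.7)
The plan is to mirror the proof of Proposition \ref{qmon}, this time inside $M=\C P^{k+m+1}(\sqrt{2})$ and using the Biran identification from Proposition \ref{cpncorr}. First I would record that $[\omega_{FS}]=\frac{2\pi}{k+m+2}c_1(TM)$ on spherical classes, since a complex line has $\omega$-area $2\pi$ and evaluates to $k+m+2$ under $c_1$. Using the exact sequence $\pi_2(M)\to\pi_2(M,L)\xrightarrow{\partial}\pi_1(L)\to 0$ with $L=\Psi^P(\underline{\mathcal{P}}_{k,m}^r)$, monotonicity of $L$ reduces to producing a family of classes in $\pi_2(M,L)$ whose $\partial$-images generate a finite-index subgroup of $\pi_1(L)$ and on each of which the symplectic area equals $\tfrac{\pi}{k+m+2}$ times the Maslov index.

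For such a family I would take three classes. Two of them are the compositions of $\Psi^P$ with the quotient projections to $T^*\R P^{k+m+1}$ of the disks $u^{(2)},u^{(3)}$ from the proof of Proposition \ref{pnd} (rescaled from the $r=1/2$ case to the relevant $r$). Because these disks lie in Lagrangian cotangent fibers, the Lagrangian-subbundle isotopy argument used in Proposition \ref{pnd} shows they have area $0$ and Maslov index $0$ also in $(M,L)$. The third class is the image under $\Theta_P$ of a disk fiber $D$ of the Biran bundle $\mathcal{D}_\pi^{k+m+1}(P_P)$ of radius $\sqrt{1-r}$ over a point of $\mathbb{S}_{k,m}$; by Proposition \ref{cpncorr} its boundary lies on $L$. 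Since the Biran symplectic form restricts to $\omega_\C$ on a disk fiber, the area of $\Theta_P(D)$ is $\pi(1-r)$, and as recalled in Section \ref{criterion} its Maslov index is $2$.

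Finally I would verify the finite-index generation of $\pi_1(L)$. Since $L\cong S^1\times (S^k\times S^m)/(\Z/2\Z)$, the boundary of $\Theta_P(D)$ represents the circle fiber of the Biran bundle, which corresponds to the $S^1$-factor of $L$; the boundaries of the two cotangent-fiber disks are great circles in the $S^k,S^m$-factors, covering the remaining $\pi_1$ up to torsion. Any residual torsion subgroup (for example the $\Z/2\Z$ factor when $k,m\geq 2$) automatically respects any area/Maslov proportionality, since twice a disk bounding a torsion generator lies in $\pi_2(M)$, on which the desired ratio already holds. Imposing the monotonicity ratio on $\Theta_P(D)$ then gives $\tfrac{\pi(1-r)}{2}=\tfrac{\pi}{k+m+2}$, whence $r=1-\tfrac{2}{k+m+2}$; the other two classes impose no further constraint. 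The main point of care is the case-by-case $\pi_1$-analysis of $L$ in the low-dimensional corners ($k=0$, $k=1$, or $m=1$), though in each of these $\Theta_P(D)$ remains the sole class imposing a nontrivial condition, so the answer is the same.
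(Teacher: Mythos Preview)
Your proposal is correct and follows essentially the same approach as the paper: reduce monotonicity via the relative homotopy exact sequence to checking the area/Maslov ratio on the disk fiber $\Theta_P(D)$ and on the (area-zero, Maslov-zero) cotangent-fiber disks $u^{(2)},u^{(3)}$, yielding $\pi(1-r)=\tfrac{2\pi}{k+m+2}$. The only cosmetic difference is that the paper also lists the complex projective line explicitly among its generating classes rather than absorbing it into the preliminary observation that $[\omega]=\tfrac{2\pi}{k+m+2}c_1$ on $\pi_2(M)$.
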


\begin{proof} A finite index subgroup of $\pi_2(\C P^{k+m+1}(\sqrt{2}),\Psi^P(\underline{\mathcal{P}}_{k,m}^{r}))$ is spanned by the homotopy classes of:
\begin{itemize}\item A complex projective line in $\C P^{k+m+1}(\sqrt{2})$, which has area $2\pi$ and Chern number $k+m+2$, hence Maslov index $2(k+m+2)$.
\item The image under $\Theta_P$ of a disk of radius $\sqrt{1-r}$ in a fiber of the bundle $\mathcal{D}_{\pi}^{k+m+1}(P_P)\to \mathbb{S}_{k,m}$, which has area $\pi(1-r)$ and Maslov index $2$.
\item The images under $\Psi$ of the disks $\pi\circ u^{(2)}$ and $\pi\circ u^{(3)}$, where $\pi\co D_{1}^{*}S^{k+m+1}\to D_{1}^{*}\R P^{k+m+1}$ is the quotient projection and $u^{(2)}$ and $u^{(3)}$ are the obvious rescalings of the disks appearing in the proof of Proposition \ref{pnd}.  The areas and Maslov indices of both of these disks are zero.
\end{itemize}
Thus $\Psi^P(\underline{\mathcal{P}}_{k,m}^{r})$ will be monotone precisely when $\frac{2\pi}{2(k+m+2)}=\frac{\pi(1-r)}{2}$, \emph{i.e.}, $r=1-\frac{2}{k+m+2}$.
\end{proof}

Accordingly we define \[ L^{P}_{k,m}=\Psi^P(\underline{\mathcal{P}}_{k,m}^{1-\frac{2}{k+m+2}}), \] which we have shown to be a monotone Lagrangian $S^1\times\left(\frac{S^k\times S^m}{\Z/2\Z}\right)$ in $\C P^{k+m+1}(\sqrt{2})$.  The two-torus $L_{0,1}^{P}$, in particular, coincides by Theorem \ref{maincp2} with the Chekanov-Schlenk torus in $\mathbb{C}P^2(\sqrt{2})$ (and also the Wu torus from \cite{Wu}), which is nondisplaceable (\cite[Theorem 2]{CS}, \cite[Theorem 1.1]{Wu}, \cite[Section 8.7]{BC12}).  On the other hand:

\begin{prop}\label{highdisp} For $k+m\geq 3$ the monotone Lagrangian submanifold $L^{P}_{k,m}\subset \C P^{k+m+1}(\sqrt{2})$ is displaceable.
\end{prop}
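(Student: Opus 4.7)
The plan is to adapt the strategy of Proposition \ref{qdisp} to the $\mathbb{C}P^n$-setting, using the explicit parametrization from Proposition \ref{cpnchar} to realize $L^P_{k,m}$ as an orbit of a specific loop in $\mathbb{H}(\sqrt{2})$, and then to displace by isotoping that loop off itself through loops of equal (standard) area.

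More concretely, for the monotone value $r = 1 - \tfrac{2}{k+m+2}$ given by Proposition \ref{pmon}, Proposition \ref{cpnchar} identifies $L^P_{k,m}$ with the image of the map
\[ G_{\gamma_0}\co S^1\times S^k\times S^m \to \mathbb{C}P^{k+m+1}(\sqrt{2}),\qquad (e^{i\theta},x,y)\mapsto \bigl[\gamma_0(e^{i\theta})x:\sqrt{2-|\gamma_0(e^{i\theta})|^2}\,y\bigr], \]
where $\gamma_0\co S^1\to\mathbb{H}(\sqrt{2})$ parametrizes the curve $\{v\in\mathbb{H}(\sqrt{2}):|v^2+2-|v|^2|^2=4(1-r^2)\}$. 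The first step is to compute, for the more general map $\Psi^{k,m}\co (v,x,y)\mapsto[vx:\sqrt{2-|v|^2}\,y]$ on $\mathbb{H}(\sqrt{2})\times S^k\times S^m$, the pullback of the Fubini--Study form. I would do this by lifting to the sphere of radius $\sqrt{2}$ in $\mathbb{C}^{k+m+2}$ and computing the pullback of the standard Liouville one-form $\tfrac{1}{2}\sum(u_j\,dw_j-w_j\,du_j)$. Writing $v=a+ib$, the terms involving $dx$ or $dy$ carry a factor of $x\cdot dx$ or $y\cdot dy$, each of which vanishes on $S^k$ and $S^m$; the only surviving contribution is $\tfrac{1}{2}(a\,db-b\,da)$, so $(\Psi^{k,m})^*\omega_P = da\wedge db$, the pullback of the standard symplectic form on $\mathbb{H}(\sqrt{2})$ with no mixed components.

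Next, given any smooth isotopy $\Gamma\co [0,1]\times S^1\to\mathbb{H}(\sqrt{2})$ through embedded loops enclosing a common standard area, define $\tilde G_\Gamma(s,e^{i\theta},x,y) = \Psi^{k,m}(\Gamma(s,e^{i\theta}),x,y)$. By the pullback computation, $\tilde G_\Gamma^*\omega_P = \Gamma^*(da\wedge db)$, and the flux one-form $\iota_{\partial_s}\tilde G_\Gamma^*\omega_P$ has only a $d\theta$ component whose integral around $S^1$ equals $\frac{d}{ds}$ of the enclosed standard area of $\Gamma(s,\cdot)$. Area-preservation of $\Gamma$ therefore ensures that the flux is exact, so $\tilde G_\Gamma$ is an exact Lagrangian isotopy.

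The numerical step, which makes the $k+m\geq 3$ hypothesis enter, is the following: by Lemma \ref{arealemma} applied with $\alpha=r$, the loop $\gamma_0$ bounds standard area $\pi(1-r)=\tfrac{2\pi}{k+m+2}$ in $\mathbb{H}(\sqrt{2})$, while $\mathbb{H}(\sqrt{2})$ has total standard area $\pi$. The inequality $\tfrac{2\pi}{k+m+2}<\tfrac{\pi}{2}$ is equivalent to $k+m\geq 3$, and exactly in this case one can produce an area-preserving smooth isotopy of $\mathbb{H}(\sqrt{2})$ carrying $\gamma_0$ to a disjoint loop. Invoking \cite[Exercise 6.1.A]{P} as in Proposition \ref{qdisp}, the resulting exact Lagrangian isotopy $\tilde G_\Gamma$ extends to a Hamiltonian isotopy of $\mathbb{C}P^{k+m+1}(\sqrt{2})$ that displaces $L^P_{k,m}$ from itself. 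I expect the main technical point to be the pullback computation in the first step, ensuring cleanly that only the standard two-form on $\mathbb{H}(\sqrt{2})$ appears; once that is in place, the remainder is a direct parallel to Proposition \ref{qdisp}.
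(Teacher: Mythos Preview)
Your proposal is correct and follows essentially the same route as the paper's proof: both parametrize $L^{P}_{k,m}$ by loops in $\mathbb{H}(\sqrt{2})$, observe that area-preserving isotopies of such loops induce exact Lagrangian isotopies (with respect to the \emph{standard} area form, as the paper notes in a footnote), invoke Lemma~\ref{arealemma} to get enclosed area $\tfrac{2\pi}{k+m+2}$, and conclude displaceability for $k+m\geq 3$. Your version supplies the explicit pullback computation that the paper leaves implicit, and you use the double cover $S^1\times S^k\times S^m$ rather than the paper's $S^1\times\bigl(\tfrac{S^k\times S^m}{\Z/2\Z}\bigr)$, but neither difference is substantive.
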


\begin{proof} We argue similarly to the proof of Proposition \ref{qdisp}.  To any simple closed curve $\gamma\co S^1\to \mathbb{H}(\sqrt{2})$ in the open upper half-disk of radius $\sqrt{2}$ we may associate the Lagrangian embedding $G_{\gamma}\co S^1\times\left(\frac{S^k\times S^m}{\Z/2\Z}\right)\to \C P^{k+m+1}(\sqrt{2})$ defined by $G_{\gamma}(e^{i\theta},x,y) = [\gamma(e^{i\theta})x:y]$. (As always, $\Z/2\Z$ acts on $S^k\times S^m$ by the antipodal map on both factors.)  In the special case that the image of $\gamma$ is the curve \[ C_{k,m}=\left\{v\in \mathbb{H}(\sqrt{2}) \left| |v^2+2-|v|^2|^2=4(1-r_{k+m}^{2})\right.\right\}\] where $r_{k+m}=1-\frac{2}{k+m+2}$, the image of $G_{\gamma}$ is our monotone Lagrangian $L^{P}_{k,m}$.  

Likewise, to an isotopy $\Gamma\co [0,1]\times S^1\to \mathbb{H}(\sqrt{2})$ we may associate the Lagrangian isotopy $\tilde{G}_{\Gamma}\co [0,1]\times S^1\times\left(\frac{S^k\times S^m}{\Z/2\Z}\right)\to\C P^{k+m+1}(\sqrt{2})$ defined by $\tilde{G}_{\Gamma}(s,e^{i\theta},x,y)=G_{\Gamma(s,\cdot)}(e^{i\theta},x,y)$; this Lagrangian isotopy will be exact precisely when the image of each $\Gamma(s,\cdot)$ encloses the same area\footnote{Unlike in the proof of Proposition \ref{qdisp}, we compute this area with respect to the standard area form on $\C$} in $\mathbb{H}(\sqrt{2})$, and in this case $\tilde{G}_{\Gamma}$ can be extended to an ambient Hamiltonian isotopy.  In particular if the image of $\gamma\co S^1\to \mathbb{H}(\sqrt{2})$ encloses area less than $\frac{\pi}{2}$ (\emph{i.e.}, less than half of the area of $\mathbb{H}(\sqrt{2})$) then the image of $G_{\gamma}$ will be displaceable, since in this case we can find an isotopy $\Gamma$ through curves all enclosing the same area which disjoins the image of $\gamma$ from itself, giving rise to a Lagrangian isotopy $\tilde{G}_{\Gamma}$ which disjoins the image of $G_{\gamma}$ from itself.

By Lemma \ref{arealemma}, the area enclosed by our curve $C_{k,m}$ is $\pi(1-r_{k+m}) = \frac{2\pi}{k+m+2}$, which is strictly less than $\frac{\pi}{2}$ when $k+m\geq 3$.  Thus the argument of the previous paragraph shows that $L_{k,m}^{P}$ is displaceable when $k+m\geq 3$.

\end{proof}

\section{Floer homology computations in $\C P^3$}\label{cp3sect}

Thus our monotone Lagrangian submanifolds $L_{k,m}^{P}$ are nondisplaceable for the unique case that $k+m+1=2$, but are displaceable for $k+m+1\geq 4$; there remains the matter of the displaceability of the Lagrangian $S^1\times S^2$, $L_{0,2}^{P}$, and the Lagrangian $T^3$, $L_{1,1}^{P}$, in $\C P^{3}$.  We will investigate this matter using Floer homology, as reformulated in terms of the Biran--Cornea pearl complex \cite{BC09} and its twisted-coefficient version \cite{BC12}.  This complex is constructed using a Morse function $f\co L\to \R$ on a monotone Lagrangian submanifold $L$ of a symplectic manifold $(M,\omega)$, a suitable almost complex structure $J$ on $M$, and a homomorphism $\chi\co H_1(L;\Z)\to \C^{*}$; at the level of vector spaces the complex coincides with the $\C$-Morse complex of $f$, while its differential counts ``pearly trajectories'' consisting of a sequence of negative gradient flowlines for $f$ and $J$-holomorphic disks with boundary on $L$, with the count of each pearly trajectory weighted by the image under $\chi$ of the homology class of the $1$-cycle on $L$ traced out by the boundaries of the holomorphic disks that appear in the pearly trajectory.  As noted in \cite[Section 2.4.1]{BC12}, the resulting homology $QH(L;\chi)$ is isomorphic to the self-Floer homology of the pair consisting of $L$ together with a  flat line bundle with holonomy $\chi\co H_1(L;\Z)\to \C^{*}$ (as defined in, \emph{e.g.}, \cite{Cho}).  In particular, if there exists a homomorphism $\chi\co H_1(L;\Z)\to \C^{*}$ such that $QH(L;\chi)$ is nonzero then $L$ is nondisplaceable. 

In the case that, as with both $L_{0,2}^{P}$ and $L_{1,1}^{P}$, the Lagrangian submanifold $L$ is spin and has minimal Maslov number $2$, and $H_1(L;\Z)$ is torsion-free, let us now recall a method (\cite{FOOOtoric}, \cite[Section 3.3]{BC12}) for finding choices of $\chi$ such that $QH(L;\chi)\neq 0$.  Let $\mathcal{E}_2$ denote the subset of $H_2(M,L;\Z)$ consisting of classes with Maslov index two.  To each $A\in \mathcal{E}_2$ we may associate an integer-valued Gromov--Witten-type invariant $\nu(A)\in\Z$ that, for generic compatible almost complex structures $J$, counts $J$-holomorphic disks with boundary on $L$ that pass through a generic point of $L$ on their boundaries (since $L$ is monotone and $A$ has minimal positive Maslov number, no bubbling is possible, and the spin structure on $L$ allows the relevant spaces of $J$-holomorphic disks to be oriented).   Where $\partial\co H_2(M,L;\Z)\to H_1(L;\Z)$ is the standard boundary map, define the superpotential $\mathcal{W}\co Hom(H_1(L;\Z),\C^*)\to \mathbb{C}$ by \[ \mathcal{W}(\chi) = \sum_{A\in \mathcal{E}_2}\nu(A)\chi(\partial A) \]  (Of course, choosing a basis of $H_1(L;\Z)$ (which we have assumed to be torsion-free) identifies $Hom(H_1(L;\Z),\C^*)$ with $(\C^*)^{b_1(L)}$.)
Then \cite[Proposition 3.3.1]{BC12} asserts that if $QH(L;\chi)\neq 0$ then $\chi$ must be a critical point of $\mathcal{W}$, and that the converse holds in the case that (as with $L_{1,1}^{P}$) $H^*(L;\R)$ is generated as a ring by $H^1(L;\R)$.

Accordingly we now set about determining the Maslov-index-two holomorphic disks with boundary on $L_{0,2}^{P}$ and $L_{1,1}^{P}$.  In fact for the first of these it is just as easy to give an argument for applying to all $L_{0,m}^{P}$ for all $m\geq 2$.

For any $k,m$, Proposition \ref{cpncorr} identifies $L_{k,m}^{P}$ with the image under the symplectic embedding $\Theta_P\co \mathcal{D}^{k+m+1}_{\pi}(P_P)\to \C P^{k+m+1}(\sqrt{2})$ of the radius-$\sqrt{2/(k+m+2)}$ circle bundle in $\mathcal{D}^{k+m+1}_{\pi}(P_P)$ over the Lagrangian submanifold $\mathbb{S}_{k,m}\subset Q_{k+m+1}(\sqrt{2})$.  Let us denote by $D\in H_2(\C P^{k+m+1}(\sqrt{2}),L_{k,m}^{P};\Z)$ the relative homology class of (the image under $\Theta_P$ of) the closed radius-$\sqrt{2/(k+m+2)}$ disk in the fiber of $\mathcal{D}^{k+m+1}_{\pi}(P_P)$ over a point $[ix:y]$ of $\mathbb{S}_{k,m}$ (where $x\in S^k,y\in S^m$).   Just as in the proof of Proposition \ref{highdisp}, the image of this disk is equivalently given as the set $\{[vx:\sqrt{2-|v|^2}y]|v\in \bar{U}\}$ where $\bar{U}$ is a certain compact region in the open half-disk $\mathbb{H}(\sqrt{2})$.

\begin{prop}\label{fibercount} For any $k,m$ such that the minimal Maslov number of $L_{k,m}^{P}$ is $2$, the Gromov--Witten invariant $\nu(D)$ associated to $D\in H_2(\C P^{k+m+1}(\sqrt{2}),L_{k,m}^{P})$ obeys  $\nu(D)\in \{-1,1\}$.
\end{prop}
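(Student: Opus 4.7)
The plan is to use the Biran disk bundle description of $L_{k,m}^P$ from Proposition \ref{cpncorr} to identify all $J$-holomorphic Maslov-index-two representatives of $D$. I would choose an $\omega$-tame almost complex structure $J$ on $\C P^{k+m+1}(\sqrt{2})$ for which $Q_{k+m+1}(\sqrt{2})$ is $J$-complex and for which, via $\Theta_P$, the disk-bundle projection $\pi\colon \mathcal{D}_\pi^{k+m+1}(P_P)\to Q_{k+m+1}(\sqrt{2})$ is pseudoholomorphic with respect to some $\omega$-tame $J_\Sigma$. Since $L_{k,m}^P$ is spin, monotone, and of minimal Maslov number two, $\nu(D)$ is the signed count of unparametrized $J$-holomorphic disks in class $D$ through a prescribed boundary point.

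A fiber-disk representative of $D$ meets the complex hypersurface $Q_{k+m+1}(\sqrt{2})$ transversally at one interior point, so any $J$-holomorphic $u$ in class $D$ has algebraic (and, by positivity of intersections, geometric) intersection $u\cdot Q_{k+m+1}(\sqrt{2})=1$. Moreover $L_{k,m}^P$ is disjoint from the isotropic skeleton $\R P^{k+m+1}=\C P^{k+m+1}(\sqrt{2})\setminus\Theta_P(\mathcal{D}_\pi^{k+m+1}(P_P))$, which has real codimension $k+m+1$; provided this is at least $3$, for generic $J$ the point-constrained zero-dimensional moduli space avoids the skeleton and each of its elements lifts into $\mathcal{D}_\pi^{k+m+1}(P_P)$. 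Applying the standard Biran--Cornea Maslov decomposition
\[
\mu(u)\;=\;\mu_\Sigma(\pi\circ u)\;+\;2\bigl(u\cdot Q_{k+m+1}(\sqrt{2})\bigr)
\]
with $\mu(u)=2$ yields $\mu_\Sigma(\pi\circ u)=0$. By the monotonicity of $\mathbb{S}_{k,m}\subset Q_{k+m+1}(\sqrt{2})$ inherited from the Biran construction of $L_{k,m}^P$, such a zero-Maslov, $J_\Sigma$-holomorphic disk has zero area and must be constant, say at $q\in\mathbb{S}_{k,m}$.

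Thus $u$ factors through the single fiber $\pi^{-1}(q)\cong D(1)$, with $\partial u$ winding once around the circle of radius $\sqrt{2/(k+m+2)}$. The Riemann mapping theorem then shows $u$ is unique up to reparametrization, with exactly one parametrized representative through any prescribed boundary point. Automatic transversality for fiber disks in the Biran bundle (cf.\ \cite{BC09}) supplies regularity, so this single element contributes $\pm 1$ to the unparametrized signed count, giving $\nu(D)\in\{-1,1\}$.

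The main obstacle I anticipate is engineering an $\omega$-tame $J$ which is simultaneously compatible with the bundle projection on $\Theta_P(\mathcal{D}_\pi^{k+m+1}(P_P))$, preserves the complex structure on $Q_{k+m+1}(\sqrt{2})$, and is generic enough elsewhere to transversely cut out the relevant moduli space; a secondary delicate point is the borderline case $k+m=1$, where the skeleton $\R P^{2}$ is half-dimensional and codimension-based avoidance fails, requiring a separate argument. In the range $k+m\geq 2$ relevant to the subsequent applications, the Biran--Cornea framework handles these issues directly, and the remainder of the argument is essentially formal.
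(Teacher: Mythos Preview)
Your approach via the Biran bundle projection and the Maslov decomposition $\mu(u)=\mu_\Sigma(\pi\circ u)+2\,(u\cdot Q)$ is genuinely different from the paper's proof, which works entirely with the \emph{standard} complex structure $J$ on $\C P^{k+m+1}(\sqrt{2})$ and uses no bundle machinery for this statement. The paper exhibits an explicit $J$-holomorphic fiber disk $u_0$ through a chosen boundary point, checks transversality directly in an affine chart, and then proves uniqueness by intersecting any competing representative $w$ with the complex hyperplanes $V_{i,j}=\{z_i+\sqrt{-1}\,z_j=0\}$ (for $0\le i<j\le k$ or $k+1\le i<j\le k+m+1$), which are disjoint both from $L_{k,m}^P$ and from $u_0$; positivity of intersections then forces $w$ to miss every $V_{i,j}$, and composing $w$ with a suitable rational map to $\C^{k+m}$ shows it must lie in a single fiber and hence be a reparametrization of $u_0$.

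Your strategy is in fact precisely what the paper adopts later for Proposition~\ref{fiveclasses} (the four remaining Maslov-two classes on $L_{1,1}^P$), using admissible $J$ in the sense of \cite{BK}. The paper's direct hyperplane argument here buys a proof that is uniform in $k,m$ (including the borderline case $k+m=1$ you correctly flag) and avoids both the construction of a bundle-compatible tame $J$ extending across the skeleton and the codimension/genericity argument for avoiding $\R P^{k+m+1}$. Your route is conceptually cleaner and fits the general Biran--Cornea framework, but the obstacles you anticipate are exactly the ones that would need to be resolved; the paper simply sidesteps them for the class $D$ by going concrete.
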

(The condition on the minimal Maslov number is included to prevent disk-bubbling and so to ensure that $\nu(D)$ is well-defined.)

\begin{proof}  
Where $J$ is the standard complex structure on $\C P^{k+m+1}(\sqrt{2})$, we will show that there is up to conformal automorphism a unique,  $J$-holomorphic representative of $D$ with boundary passing through any point in the fiber of $L_{k,m}^{P}$ over the point $[i:0:\cdots:0:1]\in \mathbb{S}_{k,m}$.

Let $\bar{U}\subset \mathbb{H}(\sqrt{2})$ be the domain mentioned just above the statement of the Proposition, and define $\alpha\co \mathbb{H}(\sqrt{2})\to \C$ by $\alpha(z)=\frac{z}{\sqrt{2-|z|^2}}$.  Then where $\phi\co D^2\to \alpha(\bar{U})$ is a biholomorphism (given by the Riemann mapping theorem),  define $u_0\co D^2\to \C P^{k+m+1}(\sqrt{2})$ by \[ u_0(z)=\left[\frac{\phi(z)\sqrt{2}}{\sqrt{1+|\phi(z)|^2}}:0:\cdots:0:\frac{\sqrt{2}}{\sqrt{1+|\phi(z)|^2}}\right] \]  Then $u_0$ is a holomorphic map (it is the composition of the holomorphic quotient projection $\C^{k+m+2}\setminus\{0\}\to \C P^{k+m+1}(\sqrt{2})$ with the map $z\mapsto (\phi(z),0,\cdots,0,1)$) whose image is precisely the image under $\Theta_P$ of the radius$-\sqrt{2/(k+m+2)}$ disk in the fiber over $[i:0:\cdots:0:1]$.  By working in the affine chart corresponding to $z_{k+m+1}=1$ it is straightforward to check that $u_0$ is cut out transversely.

We will now show that any other representative $w\co D^2\to \C P^{k+m+1}(\sqrt{2})$ of $D$ whose boundary passes through a point on the fiber of $L_{k,m}^{P}$ over $[i:0:\cdots:0:1]$ must be a reparametrization of $u_0$.  In this direction, note first that if $V\subset \C P^{k+m+1}(\sqrt{2})$ is any holomorphic subvariety that does not intersect $L_{k,m}^{P}$, then there is a well-defined intersection number $D\cdot V$; if additionally the image of the map $u_0$ from the previous paragraph does not intersect $V$ then it must be that $D\cdot V=0$ and hence that, by positivity of intersections, our other hypothetical representative $w$ of $D$ must also not intersect $V$.   

For $0\leq i<j\leq k+m+1$ let $V_{i,j}$ denote the hyperplane $\{z_i+\sqrt{-1}z_j=0\}$ in $\C P^{k+m+1}(\sqrt{2})$.  When either $0\leq i<j\leq k$, or $k+1\leq i<j\leq k+m+1$, we see that $V_{i,j}$ is disjoint from $L_{k,m}^{P}$.  Also, our disk $u_0$ is disjoint from $V_{0,j}$ for $j\leq k$ and from $V_{i,k+m+1}$ for $i\geq k+1$, and therefore so too must be any other representative $w$ of $D$.  So the image of $w$ is contained in the domain of the partially-defined holomorphic map $\C P^{k+m+1}(\sqrt{2})\dashrightarrow \C^{k+m}$ given by \[ [z_0:\cdots:z_{k+m+1}]\mapsto \left(\frac{z_1}{z_1+\sqrt{-1}z_0},\ldots,\frac{z_k}{z_k+\sqrt{-1}z_0},\frac{z_{k+1}}{z_{k+1}+\sqrt{-1}z_{k+m+1}},\ldots,\frac{z_{k+m}}{z_{k+m}+\sqrt{-1}z_{k+m+1}}\right) \]  Composing $w$ with the above map gives a holomorphic map $\tilde{w}\co D^2\to \C^{k+m}$.  By the characterization of $L_{k,m}^{P}$ from Proposition \ref{cpnchar}, the fact that $w$ has boundary on $L_{k,m}^{P}$ implies that $\tilde{w}|_{\partial D^2}$ takes values only in $\frac{1}{1+i}\R^{k+m}$.  But of course this implies that $\tilde{w}$ is constant.  Also, the fact that the boundary of $w$ passes through a point in the fiber of over $[i:0:\cdots:0:1]$ implies that $\tilde{w}$ takes the value $\vec{0}$ somewhere, and hence everywhere.  Thus our map $w$ must take the form $w(z)=[w_0(z):0:\cdots:0:w_{k+m+1}(z)]$ for some nonvanishing functions $w_0,w_{k+m+1}\co D^2\to \C$ such that $|w_0|^2+|w_{k+m+1}|^2=2$ and $\frac{w_0}{w_{k+m+1}}$ is holomorphic.  The boundary condition on $w$ implies that $\frac{w_0}{w_{k+m+1}}$ has boundary values on $\alpha(\partial\bar{U})$, and hence $\frac{w_0}{w_{k+m+1}}$ is the composition of the earlier biholomorphism $\phi\co D^2\to \alpha(\partial\bar{U})$ with a holomorphic map $D^2\to D^2$.  The homological condition on $w$ readily implies that this latter map is an automorphism of $D^2$ and consequently that $w$ is a reparametrization of $u_0$.

We have thus shown that $u_0$ is the unique $J$-holomorphic representative of $D$ up to reparametrization, and so since $u_0$ is cut out transversely we have $\nu(D)\in \{-1,1\}$.
\end{proof}

\begin{prop}\label{0mdisks} Let $m\geq 2$.  Where $J$ denotes the standard complex structure, the class $D$ is the the \textbf{only} class in $H_2(\C P^{m+1}(\sqrt{2}),L_{0,m}^{P};\Z)$ that has Maslov index two and is represented by a $J$-holomorphic disk.
\end{prop}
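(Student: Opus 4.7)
The plan is to enumerate the Maslov-two classes in $H_2(\C P^{m+1}(\sqrt{2}),L_{0,m}^P;\Z)$ algebraically, then invoke positivity of intersections with two carefully chosen complex hypersurfaces disjoint from $L_{0,m}^P$ to exclude all such classes except $D$ from admitting $J$-holomorphic representatives.

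First I would identify $H_2(\C P^{m+1}(\sqrt{2}),L_{0,m}^P;\Z)\cong\Z\{\ell\}\oplus\Z\{D\}$, where $\ell$ denotes the complex line class. Since $L_{0,m}^P\cong S^1\times S^m$ is Lagrangian, the inclusion-induced map $H_2(L_{0,m}^P)\to H_2(\C P^{m+1}(\sqrt{2}))$ is zero (cycles in a Lagrangian submanifold have vanishing symplectic area, while $[\omega]$ is injective on $H_2(\C P^{m+1}(\sqrt{2}))\cong\Z$), so the long exact sequence of the pair gives the desired form. Since $\mu(\ell)=2(m+2)$ and $\mu(D)=2$, the Maslov-two classes are precisely those $a\ell+bD$ with integers $a,b$ satisfying $a(m+2)+b=1$.

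Next I would introduce two complex hypersurfaces of $\C P^{m+1}(\sqrt{2})$ disjoint from $L_{0,m}^P$, namely the polarizing quadric $Q_{m+1}(\sqrt{2})=\{\sum_{j=0}^{m+1}z_j^2=0\}$ and the singular quadric $Q'=\{\sum_{j=1}^{m+1}z_j^2=0\}$. The first is disjoint from $L_{0,m}^P$ because on $[v:\sqrt{2-|v|^2}y]\in L_{0,m}^P$ one has $\sum z_j^2=v^2+2-|v|^2$, whose vanishing is precisely forbidden by the constraint on $v$ in Proposition \ref{cpnchar}; the second is disjoint because there $\sum_{j=1}^{m+1}z_j^2=(2-|v|^2)|y|^2=2-|v|^2>0$. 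Using the Biran parametrization from Section \ref{bircpn}, the fiber disk $D$ over $[i:y_0]\in\mathbb{S}_{0,m}$ takes the form $[i(\sqrt{1-|\zeta|^2/2}+\zeta/\sqrt{2}):(\sqrt{1-|\zeta|^2/2}-\zeta/\sqrt{2})y_0]$ with $|\zeta|\leq\sqrt{2/(m+2)}<1$, so $\sum_{j=0}^{m+1}z_j^2=-2\sqrt{2}\,\zeta\sqrt{1-|\zeta|^2/2}$ vanishes transversely only at $\zeta=0$, giving $D\cdot Q_{m+1}(\sqrt{2})=1$, whereas $\sum_{j=1}^{m+1}z_j^2=(\sqrt{1-|\zeta|^2/2}-\zeta/\sqrt{2})^2$ does not vanish at all in this disk, giving $D\cdot Q'=0$. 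Of course $\ell\cdot Q_{m+1}(\sqrt{2})=\ell\cdot Q'=2$ by degree.

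Finally, for a $J$-holomorphic representative $w$ of the Maslov-two class $a\ell+bD$, positivity of intersections with each of $Q'$ and $Q_{m+1}(\sqrt{2})$ (both $J$-complex and both disjoint from $\partial w\subset L_{0,m}^P$) gives $w\cdot Q'=2a\geq 0$ and $w\cdot Q_{m+1}(\sqrt{2})=2a+b=1-am\geq 0$, the latter using $b=1-a(m+2)$. For $m\geq 2$ these two inequalities force $a=0$, whence $b=1$ and $[w]=D$. The main insight is the availability of the auxiliary hypersurface $Q'$: it is a degree-two complex hypersurface simultaneously disjoint from $L_{0,m}^P$ and from the chosen representative of $D$, providing exactly the lower bound on $a$ needed to complement the upper bound $a\leq 1/m$ coming from $Q_{m+1}(\sqrt{2})$ and thereby eliminating the infinite family of other Maslov-two classes.
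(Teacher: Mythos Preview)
Your argument is correct and follows essentially the same route as the paper's: both identify $H_2(\C P^{m+1}(\sqrt{2}),L_{0,m}^P;\Z)=\Z\ell\oplus\Z D$, impose the Maslov constraint $a(m+2)+b=1$, and then combine positivity of intersections with the smooth quadric $Q_{m+1}(\sqrt{2})$ and with one further complex divisor disjoint from $L_{0,m}^P$ to force $a=0$. The only difference is the choice of that second divisor---the paper uses the hyperplane $H=\{z_m+iz_{m+1}=0\}$ (with $\ell\cdot H=1$, $D\cdot H=0$) where you use the singular quadric $Q'$; your choice has the virtue of being manifestly disjoint from $L_{0,m}^P$, but since $Q'$ is not a smooth submanifold you should justify positivity by counting zeros of the pulled-back holomorphic section $\sum_{j\geq 1}z_j^2$ of $\mathcal{O}(2)$ (nonvanishing on $\partial D^2$ because $L_{0,m}^P\cap Q'=\varnothing$) rather than by citing positivity of intersections with a complex hypersurface.
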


\begin{proof}  The long exact sequence for relative homology shows that $H_2(\C P^{m+1}(\sqrt{2}),L_{0,m}^{P};\Z)$ has basis $\{D,\ell\}$ where  $\ell$ is the class of a projective line in $\C P^{m+1}(\sqrt{2})$.  Now if $C\in H_2(\C P^{m+1}(\sqrt{2}),L_{0,m}^{P};\Z)$ and if $V\subset M$ is a compact oriented codimension-two submanifold such that $V\cap L_{0,m}^{P}=\varnothing$ we have a well-defined $\Z$-valued intersection number $A\cdot V$.  If $A$ is represented by a $J$-holomorphic disk and if $V$ is a $J$-complex hypersurface then it will hold that $A\cdot V\geq 0$ by positivity of intersections.  

In particular, where $Q=Q_{m+1}(\sqrt{2})$ denotes the quadric hypersurface, we have $\ell\cdot Q=2$ and $D\cdot Q=1$.  So for a general class $A=\lambda\ell+\delta D$ to be represented by a $J$-holomorphic curve it is necessary that \[ A\cdot Q=2\lambda+\delta\geq 0 \]

Meanwhile the Maslov indices of $\ell$ and $D$ are given by $\mu(\ell)=2(m+2)$ and $\mu(D)=2$.  So for $\mu(A)=2$ 
we must have \[
(m+2)\lambda +\delta = 1,
\]
and hence, since $2\lambda+\delta\geq 0$, \[ m\lambda=1-(2\lambda+\delta)\leq 1,\] forcing $\lambda\leq 0$ since $\lambda\in\Z$ and $m\geq 2$.  

Also, where $H=\{z_{m}+iz_{m+1}=0\}$, the hyperplane $H$ is disjoint from the image under $\Theta_P$ of a disk fiber of $\underline{D}_{\pi}$ over a point of $\mathbb{S}_{0,m}$, as follows from the description of this disk just before the statement of  Proposition \ref{fibercount}.  So $D\cdot H=0$, while of course $\ell\cdot H=1$.  So for the class $A=\lambda\ell+\delta D$ to have nonnegative intersection with $H$ we must have $\lambda\geq 0$.  But we have already shown that $\lambda\leq 0$, so $\lambda=0$ and, in order for $\mu(C)$ to be $2$, it must be that $\delta=1$.
\end{proof}

\begin{cor}\label{floervanish} For $m\geq 2$ and all homomorphisms $\chi\co H_1(L_{0,m}^{P};\Z)\to \C^*$ we have $QH(L;\chi)=0$ 
\end{cor}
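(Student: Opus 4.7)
The plan is to compute the Biran--Cornea superpotential $\mathcal{W} \co \mathrm{Hom}(H_1(L_{0,m}^P; \Z), \C^*) \to \C$ explicitly and observe that it has no critical points; by \cite[Proposition 3.3.1]{BC12}, this will force $QH(L_{0,m}^P; \chi) = 0$ for every $\chi$.

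First I would set up the topology.  As observed earlier, $L_{0,m}^P$ is diffeomorphic to $S^1 \times S^m$, so $H_1(L_{0,m}^P; \Z) \cong \Z$ and $\mathrm{Hom}(H_1, \C^*) \cong \C^*$, with coordinate $z := \chi(\partial D)$, where $D \in H_2(\C P^{m+1}(\sqrt{2}), L_{0,m}^P; \Z)$ is the disk-fiber class of Proposition \ref{fibercount}.  The class $\partial D$ generates $H_1(L_{0,m}^P; \Z)$: by the Biran circle-bundle description from Proposition \ref{cpncorr}, $\partial D$ is the $S^1$-fiber class of the circle bundle $L_{0,m}^P \to \mathbb{S}_{0,m}$, which corresponds precisely to the $S^1$ factor of $L_{0,m}^P \cong S^1 \times S^m$.

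Next I would evaluate $\mathcal{W}$.  By Proposition \ref{fibercount}, $\nu(D) = \pm 1$.  For any other Maslov-two class $A \in \mathcal{E}_2$, Proposition \ref{0mdisks} states that $A$ has no $J_{std}$-holomorphic representative.  Since $L_{0,m}^P$ is monotone with minimal Maslov number exactly $2$, no sphere or disk bubbling can occur in the compactification of Maslov-two disk moduli spaces.  Hence Gromov compactness for disks with Lagrangian boundary implies that, for any sequence of regular tame almost complex structures $J_n \to J_{std}$, $J_n$-holomorphic disks in class $A$ would converge without bubbling to a $J_{std}$-holomorphic disk in class $A$, contradicting Proposition \ref{0mdisks}.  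So the relevant moduli spaces are empty for all $J$ sufficiently close to $J_{std}$, and therefore $\nu(A) = 0$ for every $A \in \mathcal{E}_2 \setminus \{D\}$.  Consequently,
\[ \mathcal{W}(\chi) = \nu(D)\,\chi(\partial D) = \pm z. \]

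Finally, the function $z \mapsto \pm z$ on $\C^*$ has nowhere vanishing derivative, so $\mathcal{W}$ has no critical points in $\mathrm{Hom}(H_1(L_{0,m}^P; \Z), \C^*)$.  By \cite[Proposition 3.3.1]{BC12}, any $\chi$ with $QH(L_{0,m}^P; \chi) \neq 0$ must be a critical point of $\mathcal{W}$, so $QH(L_{0,m}^P; \chi) = 0$ for all $\chi$, as claimed.  The main technical point in this outline is the passage from the absence of $J_{std}$-holomorphic disks in a given class $A$ to the vanishing of the Gromov--Witten count $\nu(A)$; this is where the hypothesis $m \geq 2$ enters, since it guarantees that the minimal Maslov number is exactly $2$ and thereby rules out bubbling in limits, allowing one to deform $J_{std}$ to a regular perturbation while keeping the moduli spaces in classes other than $D$ empty.
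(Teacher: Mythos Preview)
Your proof is correct and takes the same approach as the paper: compute the superpotential as $\mathcal{W}(\chi)=\pm\chi(\partial D)$ from Propositions \ref{fibercount} and \ref{0mdisks}, observe it has no critical points, and invoke \cite[Proposition 3.3.1]{BC12}. One minor correction to your closing remark: the minimal Maslov number of $L_{0,m}^{P}$ is $2$ for every $m\geq 1$ (the disk class $D$ already has Maslov index $2$), so the hypothesis $m\geq 2$ actually enters only through Proposition \ref{0mdisks}, not through the bubbling analysis.
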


(Of course, when $m\geq 3$ this already follows from the fact that $L_{0,m}^{P}$ is displaceable.)

\begin{proof} The basis $\{\ell,D\}$ for $H_1(L_{0,m}^{D};\Z)$ used in the proof of Proposition \ref{0mdisks} identifies $Hom(H_1(L_{0,m}^{P};\Z),\C^{*})$ with $(\C^{*})^2$, by identifying $\chi\in  Hom(H_1(L_{0,m}^{P};\Z),\C^{*})$ with the pair $(\chi(\ell),\chi(D))$.  In terms of this identification, it follows directly from Propositions \ref{fibercount} and \ref{0mdisks} that the superpotential $\mathcal{W}$ is given by \[ \mathcal{W}(t_{\ell},t_{D}) = \pm t_D \] (for some sign $\pm$ which we have no particular need to determine).  Thus $\mathcal{W}$ has no critical points, and the Corollary follows from \cite[Proposition 3.3.1]{BC12}. 
\end{proof}

We now focus entirely on the torus $L_{1,1}^{P}\subset \C P^{3}(\sqrt{2})$. 

 Let us describe $H_2(\C P^{3}(\sqrt{2}),L_{1,1}^{P};\Z)$.  As with the other $L_{k,m}^{P}$ we have the disk fiber class $D$ and the class $\ell$ of a complex projective line.  In addition, we introduce two classes $C_1,C_2\in H_2(\C P^{3}(\sqrt{2}),L_{1,1}^{P};\Z)$ as follows.  By Proposition \ref{cpnchar}, where  $s_{\pm}=\sqrt{1\pm\frac{\sqrt{3}}{2}}$ (so that $s_{\pm}=\sqrt{2-s_{\mp}^{2}}$) the point $[is_{\pm}x:s_{\mp}y]$ lies on $L_{1,1}$ for each $x,y\in S^1$.  The classes $C_1$ and $C_2$ are represented respectively by the maps $v_1,v_2\co D^2\to\C P^3(\sqrt{2})$ defined by \begin{align*} v_1(re^{i\theta}) &= \left[is_{-}r\cos\theta:is_{-}r\sin\theta:\sqrt{2-r^2s_{-}^{2}}:0\right] \\
\\ v_2(re^{i\theta}) &=  \left[i\sqrt{2-r^2s_{-}^{2}}:0:rs_-\cos\theta:rs_-\sin\theta\right] \end{align*}

If $\gamma\co S^1\to \mathbb{H}(\sqrt{2})$ is a counterclockwise parametrization of the curve $\{|v^2+2-|v|^2|^2=3,\,Im(v)>0\}$, there is a double cover $\psi\co S^1\times S^1\times S^1\to L_{1,1}^{P}$ defined by \[ \psi(e^{i\alpha},e^{i\theta},e^{i\phi}) = [\gamma(e^{i\alpha})\cos\theta: \gamma(e^{i\alpha})\sin\theta:\sqrt{2-|\gamma(e^{i\alpha})|^2}\cos\phi:\sqrt{2-|\gamma(e^{i\alpha})|^2}\sin\phi],\] with the unique nontrivial automorphism of the cover given by simultaneous negation on the last two $S^1$ factors.  The images under $\psi$ of the three $S^1$ factors represent the classes of, respectively, $\partial D$, $\partial C_1$, and $\partial C_2$.  So $H_1(L_{1,1}^{P};\Z)$ has basis $\{\partial D,\partial C_1,\beta\}$ where $\beta$ is a class with $2\beta=\partial(C_1+C_2)$.   The long exact relative homology sequence shows that there is a class $B\in H_2(\C P^{3}(\sqrt{2}),L_{1,1}^{P};\Z)$, unique up to addition of a multiple of $\ell$, such that $\partial B=\beta$.  For some $n\in\Z$ we must have $2B=C_1+C_2+n\ell$, and in fact we can (and do)  take $n\in\{0,1\}$ by adding a suitable multiple of $\ell$ to $B$.

To determine whether $n$ is $0$ or $1$, note that where $H_{0}^{+}$ denotes the hyperplane $\{z_0+iz_1=0\}$, we have $\ell\cdot H_{0}^{+}=1$, $C_1\cdot H_{0}^{+}=1$, and $C_2\cdot H_{0}^{+}=0$.  So the fact that $2B=C_1+C_2+n\ell$ implies that $B\cdot H_{0}^{+}=\frac{1+n}{2}$, and hence that $n=1$ since $B\cdot H_{0}^{+}\in \Z$.

We thus conclude that \[ \{B,C_1,C_2,D\} \mbox{ is a basis for }H_2(\C P^{3}(\sqrt{2}),L_{1,1}^{P};\Z) \] and that \begin{equation}\label{lbc} \ell=2B-C_1-C_2 \end{equation}

\begin{prop}\label{onlyclasses} If $A\in H_2(\C P^{3}(\sqrt{2}),L_{1,1}^{P};\Z)$ is a class with Maslov index two that is represented by a $J$-holomorphic disk, where $J$ is the standard almost complex structure on $\C P^{n}(\sqrt{2})$, then \[ A\in \{D,B-D,B-D-C_1, B-D-C_2,B-D-C_1-C_2\}\]
\end{prop}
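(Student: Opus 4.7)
The plan is to combine a Maslov-index computation with positivity of intersections against several $J$-complex hypersurfaces disjoint from $L_{1,1}^{P}$ to cut the integer lattice $H_2(\C P^3(\sqrt{2}), L_{1,1}^{P};\Z) = \Z\langle B, C_1, C_2, D\rangle$ down to the indicated list.  First I would compute the Maslov indices of the basis.  Since $L_{1,1}^{P}$ is monotone, $\mu$ is proportional to the symplectic area on $H_2(\C P^3(\sqrt{2}), L_{1,1}^{P})$; from Proposition \ref{pmon} the disk $D$ has area $\pi(1-r) = \pi/2$ with $r = 1/2$ and Maslov index $2$, so the monotonicity constant is $\lambda = \pi/4$.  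The projective line $\ell$ has area $2\pi$, giving $\mu(\ell) = 8$.  For $C_1$ and $C_2$ I would lift $v_1, v_2$ to $S^7(\sqrt{2})$ and pull back the standard primitive $\frac{1}{2}\sum(x_j\,dy_j - y_j\,dx_j)$ of the Fubini--Study form; on each boundary circle every $x_j$ or $y_j$ is either identically zero or constant, so both boundary integrals vanish, yielding $\mu(C_1) = \mu(C_2) = 0$.  The identity $2B = C_1 + C_2 + \ell$ established just before the proposition then forces $\mu(B) = 4$, so for $A = bB + c_1 C_1 + c_2 C_2 + dD$ the Maslov condition $\mu(A) = 2$ becomes $d = 1 - 2b$.

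Second, I would single out five $J$-complex hypersurfaces disjoint from $L_{1,1}^{P}$: the four hyperplanes $H_{01}^{\pm} = \{z_0 \pm i z_1 = 0\}$ and $H_{23}^{\pm} = \{z_2 \pm i z_3 = 0\}$, together with the quadric $Q = Q_3(\sqrt{2})$.  Each is easily checked to miss $L_{1,1}^{P}$ via Proposition \ref{cpnchar}: if $[vx : \sqrt{2-|v|^2}\,y] \in L_{1,1}^{P}$, then $z_0 \pm iz_1 = v(x_1 \pm i x_2)$ is nonzero because $v \in \mathbb{H}(\sqrt{2})$ and $x$ is a unit real vector; the same works for $H_{23}^{\pm}$; and $\sum z_j^{2} = v^2 + 2 - |v|^2$ has modulus $\sqrt{3} \ne 0$ on $L_{1,1}^{P}$.

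Third, I would compute the intersection numbers of each hypersurface with the basis, using the explicit parametrizations $v_1, v_2$ of $C_1, C_2$ and representing $D$ as a disk of radius $1/\sqrt{2}$ in a fiber of the Biran tubular neighborhood $\Theta_P$.  Restricted to $v_1$, the function $z_0 + i z_1$ becomes $is_- z$ (holomorphic in the disk parameter $z$), giving $C_1 \cdot H_{01}^{+} = +1$, while $z_0 - i z_1$ becomes $is_- \bar{z}$ (antiholomorphic), giving $C_1 \cdot H_{01}^{-} = -1$; the image of $v_1$ is disjoint from $H_{23}^{\pm}$ and from $Q$ (where $\sum z_j^{2} = 2 - 2 s_-^{2} r^2$ vanishes only at $r = 1/s_- > 1$, outside the disk).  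Symmetrically $C_2 \cdot H_{23}^{\pm} = \pm 1$ and $C_2$ is disjoint from $H_{01}^{\pm}$ and $Q$.  The disk $D$ is disjoint from each $H_{ij}^{\pm}$ and meets $Q$ transversely once, while $\ell \cdot H_{ij}^{\pm} = 1$ and $\ell \cdot Q = 2$.  The relation $2B = C_1 + C_2 + \ell$ then yields $B \cdot H_{01}^{+} = B \cdot H_{23}^{+} = B \cdot Q = 1$ and $B \cdot H_{01}^{-} = B \cdot H_{23}^{-} = 0$.

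Finally, positivity of intersection between a $J$-holomorphic representative of $A$ and each of these five hypersurfaces gives
\[
b + c_1 \ge 0, \quad -c_1 \ge 0, \quad b + c_2 \ge 0, \quad -c_2 \ge 0, \quad 1 - b \ge 0.
\]
Combined with $d = 1 - 2b$, these force $b \in \{0,1\}$ and $c_1, c_2 \in \{-b, \dots, 0\}$, leaving exactly the five classes in the statement: $b = 0$ gives $A = D$, while $b = 1$ gives $A = B - D - \epsilon_1 C_1 - \epsilon_2 C_2$ for $\epsilon_1, \epsilon_2 \in \{0,1\}$.  The delicate step is the third one, where one must remember that $v_1$ and $v_2$ are smooth but \emph{not} $J$-holomorphic representatives of their classes, so their algebraic intersections with the ``antiholomorphic'' hyperplanes $H_{01}^{-}$ and $H_{23}^{-}$ are genuinely $-1$; conflating these with $+1$ would wrongly restrict $c_1, c_2 \ge 0$ and break the enumeration.
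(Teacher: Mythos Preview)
Your proposal is correct and follows essentially the same route as the paper's proof: both arguments use the five $J$-complex hypersurfaces $Q_3(\sqrt{2})$ and $\{z_0\pm iz_1=0\}$, $\{z_2\pm iz_3=0\}$, compute the identical intersection table for $B,C_1,C_2,D$, and combine the resulting positivity inequalities with the Maslov relation $2b+d=1$ to cut down to the five listed classes.  The only cosmetic differences are that you verify $\mu(C_1)=\mu(C_2)=0$ by a Stokes argument on the primitive rather than by observing (as the paper does) that the images of $v_1,v_2$ lie in isotropic copies of $\R P^2$, and that you make explicit the point about $v_1,v_2$ being non-holomorphic so that negative intersection numbers with $H_{01}^{-},H_{23}^{-}$ are legitimate---a point the paper leaves tacit.
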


\begin{proof}  The maps $v_1$ and $v_2$ defined above that represent $C_1$ and $C_2$ clearly have area zero, so by the monotonicity of $L_{1,1}^{P}$ we have $\mu(C_1)=\mu(C_2)=0$ where $\mu$ is the Maslov index.  Meanwhile $\mu(D)=2$, and $\mu(\ell)=2\langle c_1(T\C P^{3}),\ell\rangle=8$, so by (\ref{lbc}) we have $\mu(B)=4$.

Thus if we write a general class $A\in H_2(\C P^{3}(\sqrt{2}),L_{1,1}^{P};\Z)$ as \[ A= bB+c_1C_1+c_2 C_2+dD \] then $\mu(A)=2$ if and only if \begin{equation}\label{mas} 2b+d=1
\end{equation}
Let $Q=Q_{3}(\sqrt{2})$,  $H_{0}^{\pm}=\{z_0\pm iz_1=0\}$, and $H_{2}^{\pm}=\{z_2\pm iz_3=0\}$.  The following table gives the intersection numbers of various classes in $H_2(\C P^{3}(\sqrt{2}),L_{1,1}^{P};\Z)$ with $Q,H_{0}^{\pm},H_{2}^{\pm}$; the first four rows are easily verified by examining explicit representatives, while the last row is obtained from the first three using (\ref{lbc}):\[ \begin{tabular}{c||c|c|c|c|c}
$\cdot$ & $Q$ & $H_{0}^{+}$ & $H_{0}^{-}$ & $H_{2}^{+}$ & $H_{2}^{-}$ \\  \hline
$\ell$ &  $2$ &   $1$       &   $1$       &   $1$       &   $1$  \\
$C_1$  &  $0$ &  $1$        &   $-1$      &   $0$       &   $0$  \\
$C_2$  &  $0$ &  $0$        &   $0$      &   $1$       &   $-1$  \\
$D$    &  $1$ &  $0$        &   $0$      &   $0$       &   $0$   \\
$B$    &  $1$ &  $1$        &   $0$      &   $1$       &   $0$   
\end{tabular}\]

If the Maslov-index-two class $A=bB+c_1C_1+c_2 C_2+dD$ is to be represented by a $J$-holomorphic disk, its intersections with $Q$,$H_{0}^{+},H_{0}^{-}$, $H_{1}^{+}$, and $H_{1}^{-}$ must be nonnegative.  The fact that $A\cdot Q\geq 0$ implies that $b+d\geq 0$, which combined with (\ref{mas}) yields $0\leq b+(1-2b)=1-b$, so $b\leq 1$.

The fact that $A\cdot (H_{0}^{+}+H_{0}^{-})\geq 0$, on the other hand, implies that $b\geq 0$.  Hence \[ \mbox{either }b=0\mbox{ and }d=1\quad \mbox{or}\quad b=1\mbox{ and }d=-1 \]  In the first case, positivity of intersections with both $H_{0}^{+}$ and $H_{0}^{-}$ combine to force $c_1=0$, while positivity of intersections with both $H_{2}^{+}$ and $H_{2}^{-}$ combine to force $c_2=0$.  So the only possibility for the class $A$ when $b=0$  is $A=D$.  

Meanwhile if $b=1$ and $d=-1$ then for $j=0,1$ positivity of intersections with $H_{j}^{+}$ and $H_{j}^{-}$ combine to give $-1\leq c_j\leq 0$.  This yields precisely the rest of the possibilities asserted in the statement of the proposition.
\end{proof}

\begin{prop}\label{fiveclasses}  For each $A\in \{D,B-D,B-D-C_1, B-D-C_2,B-D-C_1-C_2\}$ the corresponding Gromov--Witten invariant $\nu(A)$ is $\pm 1$.
\end{prop}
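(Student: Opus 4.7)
The case $A=D$ is Proposition \ref{fibercount}, so only the four classes $B-D$, $B-D-C_1$, $B-D-C_2$, and $B-D-C_1-C_2$ remain. For each of these the plan is to follow the template of Proposition \ref{fibercount}: exhibit an explicit $J$-holomorphic disk in the class and then use positivity of intersections to force uniqueness up to reparametrization. First I cut the work down using the holomorphic K\"ahler isometry $\sigma\co[z_0{:}z_1{:}z_2{:}z_3]\mapsto[z_2{:}z_3{:}z_0{:}z_1]$ of $\C P^3(\sqrt{2})$, which preserves $L_{1,1}^P$ by Proposition \ref{cpnchar}. Since $\sigma$ preserves $Q$ and exchanges $H_0^{\pm}\leftrightarrow H_2^{\pm}$, comparing against the intersection table preceding Proposition \ref{onlyclasses} shows that $\sigma_*D=D$, $\sigma_*B=B$, and $\sigma_*C_1=C_2$.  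Hence $\sigma$ induces a bijection of the moduli spaces for $B-D-C_1$ and $B-D-C_2$, so $|\nu(B-D-C_1)|=|\nu(B-D-C_2)|$, and it is enough to treat $B-D$, $B-D-C_1$, and $B-D-C_1-C_2$.

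For each of these three classes, a glance at the intersection table reveals that $A\cdot Q=0$ and that $A$ has vanishing intersection with two of the four hyperplanes $H_0^{\pm},H_2^{\pm}$ while hitting each of the other two transversely once.  Positivity of intersections therefore forces any $J$-holomorphic representative into a specific affine chart of $\C P^3(\sqrt{2})$ (namely the complement of one of the missed hyperplanes), on which it must further avoid the other missed hyperplane and the quadric while hitting each of the two remaining hyperplanes transversely once.  In that chart I will write an ansatz for the disk whose homogeneous coordinate functions are products of a nowhere-vanishing exponential factor (to miss the additional hyperplane), linear factors matching the simple intersections with the remaining hyperplanes, and a Blaschke-type factor implementing the boundary wrapping prescribed by $\partial A\in H_1(L_{1,1}^P;\Z)$.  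Imposing the boundary condition from Proposition \ref{cpnchar} together with the $SO(2)\times SO(2)$-symmetry of $L_{1,1}^P$ by rotations in the $(z_0,z_1)$- and $(z_2,z_3)$-planes will pin the ansatz down to a single representative up to reparametrization of $D^2$.  Uniqueness will then be established by running the positivity-of-intersections argument of Proposition \ref{fibercount} against auxiliary hyperplanes (analogues of the $V_{i,j}$ used there), which confines any competing representative to the same affine chart and the same coordinate vanishing pattern; the Lagrangian boundary condition collapses the resulting family to a single reparametrization orbit.  Transversality will be verified by a standard linearization computation in the affine chart: writing the disk as a graph, the linearized $\bar{\partial}_J$-operator decouples into Cauchy--Riemann operators on line bundles over $D^2$ whose Maslov indices sum to $2$.

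The main obstacle is Step 2 above, namely writing down and verifying the correct ansatz in each of the three cases.  Unlike $D$, which is the Biran disk fiber with a canonical description coming from the symplectic disk-bundle structure on $\mathcal{D}_{\pi}^{3}(P_P)$, the classes $B-D$, $B-D-C_1$, and $B-D-C_1-C_2$ do not arise from a single unified geometric construction and each requires separate analysis; in particular one must check that the prescribed intersection and winding data can be realized by a connected smooth holomorphic disk rather than by a stable map with sphere bubbles or multiple disk components.  The most delicate case is likely $B-D-C_1-C_2=\ell-B-D$, where the disk is heuristically a ``complementary piece'' of a projective line after removing a $B$-disk and a $D$-disk and where the boundary winding around the three generators of $\pi_1(L_{1,1}^P)$ has to be traced carefully against the parametrization from Proposition \ref{cpnchar}.
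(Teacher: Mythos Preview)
Your proposal is a plan rather than a proof: the central step---actually exhibiting the $J$-holomorphic disks in the classes $B-D$, $B-D-C_1$, $B-D-C_1-C_2$ and proving their uniqueness---is never carried out, and you explicitly flag it as ``the main obstacle.''  The ansatz you describe (exponential factor times linear factors times Blaschke factor) is vague, and there is no verification that the resulting map has boundary on $L_{1,1}^{P}$, represents the correct class, or is cut out transversely.  Your uniqueness argument is likewise only gestured at: unlike in Proposition~\ref{fibercount}, where the auxiliary hyperplanes $V_{i,j}$ were chosen so that the composed map $\tilde w$ landed in $\C^{k+m}$ with boundary on a totally real linear subspace (forcing constancy), here you do not specify which hyperplanes to use or why the resulting constraints pin the disk down.

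More to the point, the paper's proof avoids all of this work by exploiting the Biran circle-bundle structure.  Since each of the four remaining classes has $A\cdot Q_3(\sqrt 2)=0$, an admissible almost complex structure in the sense of \cite{BK} forces any representative to miss the quadric; then \cite[Proposition 5.0.2, Section 6.1, Lemma 7.1.1]{BK} show that such disks are transversely cut out and correspond bijectively, via the disk-bundle projection $\underline\pi\co\mathcal D_\pi^3(P_P)\to Q_3(\sqrt 2)$, to Maslov-two $J_Q$-holomorphic disks in $(Q_3(\sqrt 2),\mathbb S_{1,1})$.  Under the K\"ahler isomorphism $Q_3(\sqrt 2)\cong S^2\times S^2$ taking $\mathbb S_{1,1}$ to the Clifford torus, one knows there are exactly four such disks through a point, in four distinct boundary classes; these must lift to the four classes in question, giving $\nu(A)=\pm1$ for each.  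This reduction to the Clifford torus is the key idea you are missing: it replaces three delicate explicit constructions and uniqueness arguments in $\C P^3$ by a single appeal to a well-understood model.
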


\begin{proof}  We have already established this result for $A=D$ in Proposition \ref{fibercount}.  For the remaining classes, we use the dense embedding $\Theta_P\co \underline{\mathcal{D}}_{\pi}\to \C P^{3}(\sqrt{2})$ in order to appeal to the results of \cite{BK}.
In particular we compute $\nu(A)$ using an ``admissible'' almost complex structure $J$ as defined in \cite[p. 19]{BK} (this involves first choosing a regular almost complex structure $J_Q$ on the quadric $Q_{3}(\sqrt{2})$, which we take to be the standard complex structure).  These almost complex structures make the quadric $Q_3(\sqrt{2})$ holomorphic, so since all of the classes in the proposition other than $D$ have intersection number zero with $Q_3(\sqrt{2})$, any $J$-holomorphic representative of one of these classes will be contained in $\C P^{3}(\sqrt{2})\setminus Q_3(\sqrt{2})$. But then \cite[Proposition 5.0.2]{BK} implies that these disks must all be contained in (the image under $\Theta_P$ of) a small neighborhood of the radius-$1/\sqrt{2}$-circle bundle in $\mathcal{D}_{\pi}^{k+m+1}(P_P)$, and then \cite[Section 6.1]{BK} shows that such disks are cut out transversely.  

Also, by the definition of admissibility in \cite{BK}, such a disk is contained in a region on which the projection $\underline{\pi}\co\mathcal{D}_{\pi}^{k+m+1}(P_P)\to Q_{3}(\sqrt{2})$ is $(J,J_Q)$-holomorphic, and therefore maps to a $J_Q$-holomorphic disk in $Q_{3}(\sqrt{2})$, which has Maslov index two by \cite[Proposition 2.4.2]{BK}.  Conversely,   for any $p\in L_{1,1}^{P}$ and any $J_Q$-holomorphic disk $u_0\co (D^2,\partial D^2)\to (Q_{3}(\sqrt{2}),\mathbb{S}_{1,1})$ passing through $\underline{\pi}(p)$, by \cite[Lemma 7.1.1]{BK} there is a unique-up-to-automorphism $J$-holomorphic disk $u\co (D^2,\partial D^2)\to (\C P^{n}\setminus Q_{3}(\sqrt{2}),L_{1,1}^{P})$ that passes through $p$, and again by \cite[Proposition 2.4.2]{BK} $u$ and $u_0$ have the same Maslov indices.

Now there is a K\"ahler isomorphism between $Q_{3}(\sqrt{2})$ and $S^2\times S^2$ that takes $\mathbb{S}_{1,1}$ to the product of equators (one can see this, by, for instance, considering the moment map of the action of a maximal torus in $O(4)$ on $Q_{3}(\sqrt{2})$, which has $\mathbb{S}_{1,1}$ as its central fiber, and then appealing to the uniqueness theorem for toric manifolds with a given moment polytope).  In view of this, there are up to automorphism precisely four $J_Q$-holomorphic disks in $Q_{3}(\sqrt{2})$ with boundary on $\mathbb{S}_{1,1}$ passing through a given point of $\mathbb{S}_{1,1}$ on their boundary; moreover the boundaries of these disks represent four distinct classes in $H_{1}(\mathbb{S}_{1,1};\Z)$.  Consequently the lifts of these four disks must have boundaries that represent distinct classes in $H_{1}(L_{1,1}^{P};\Z)$.  So since the only four classes in 
$H_2(\C P^{3}(\sqrt{2}),L_{1,1}^{P};\Z)$ of Maslov index two and intersection number zero with $Q_{3}(\sqrt{2})$ that can be represented by $J$-holomorphic disks are the classes $B-D$, $B-D-A_1$, $B-D-A_2$, and $B-D-A_1-A_2$, we conclude that there must be precisely one disk up to automorphism in each of these four classes with boundary passing through any given point on $L_{1,1}^{P}$.  As mentioned earlier, for the admissible almost complex structure $J$ all disks in $\C P^{3}(\sqrt{2})\setminus Q_{3}(\sqrt{2})$ will be cut out transversely, so we indeed obtain that all four of these classes have Gromov--Witten invariant in $\{-1,1\}$.
\end{proof}

\begin{cor}\label{l11}
There is  $\chi\in Hom(H_1(L_{1,1}^{P};\Z),\C^*)$ such that $QH(L_{1,1}^{P};\chi)\neq 0$.  Thus $L_{1,1}^{P}$ is nondisplaceable.
\end{cor}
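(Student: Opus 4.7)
The plan is to invoke the Biran--Cornea critical point criterion for the superpotential and to exhibit explicitly a critical point coming from the enumeration of Maslov-index-two disks carried out in Propositions \ref{onlyclasses} and \ref{fiveclasses}. Since $L_{1,1}^{P}$ is diffeomorphic to $T^3$, it is spin and orientable, its first homology $\Z^3$ is torsion free, and its cohomology ring is generated by $H^1$; moreover $L_{1,1}^{P}$ is monotone with minimal Maslov number $2$. Thus the hypotheses of \cite[Proposition 3.3.1]{BC12} are satisfied, so $QH(L_{1,1}^{P};\chi)\neq 0$ if and only if $\chi$ is a critical point of the superpotential $\mathcal{W}\co Hom(H_1(L_{1,1}^{P};\Z),\C^*)\to\C$.

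Next I would set up coordinates on $Hom(H_1(L_{1,1}^{P};\Z),\C^*)\cong (\C^*)^3$ using the basis $\{\partial D,\partial C_1,\beta\}$ of $H_1(L_{1,1}^{P};\Z)$ established just before Proposition \ref{onlyclasses}, writing $t_1=\chi(\partial D)$, $t_2=\chi(\partial C_1)$, $t_3=\chi(\beta)$. A direct computation using $\partial C_2=2\beta-\partial C_1$ gives the boundaries of the five Maslov-index-two classes listed in Proposition \ref{onlyclasses}:
\begin{align*}
\partial D &= e_1, & \partial(B-D) &= -e_1+e_3, & \partial(B-D-C_1) &= -e_1-e_2+e_3, \\
\partial(B-D-C_2) &= -e_1+e_2-e_3, & \partial(B-D-C_1-C_2) &= -e_1-e_3.
\end{align*}
Combined with the values $\nu(A)\in\{\pm1\}$ from Proposition \ref{fiveclasses}, this gives
\[ \mathcal{W}(t_1,t_2,t_3)=\epsilon_1 t_1+\frac{\epsilon_2 t_3}{t_1}+\frac{\epsilon_3 t_3}{t_1t_2}+\frac{\epsilon_4 t_2}{t_1t_3}+\frac{\epsilon_5}{t_1t_3} \]
for suitable signs $\epsilon_j\in\{\pm 1\}$.

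At this point I would absorb the signs: by replacing $(t_1,t_2,t_3)$ by $(\epsilon_1 t_1,\lambda_2 t_2,\lambda_3 t_3)$ for appropriate $\lambda_2,\lambda_3\in\{\pm 1\}$ the five monomials can be made positive simultaneously (the five sign patterns above live in a two-dimensional subspace of $\{\pm 1\}^5$ modulo the action of global sign flips, which is exactly absorbed by the two-dimensional group of diagonal sign changes in $(t_2,t_3)$). After this change of variables we may assume
\[ \mathcal{W}(t_1,t_2,t_3)=t_1+\frac{1}{t_1}\left(t_3+\frac{t_3}{t_2}+\frac{t_2}{t_3}+\frac{1}{t_3}\right)=t_1+\frac{(1+t_2)(t_3^2+t_2)}{t_1 t_2 t_3}. \]
Setting the three partial derivatives to zero and solving yields $t_2=t_3=1$ and $t_1^2=4$, i.e.\ $(t_1,t_2,t_3)=(\pm 2,1,1)$ are critical points of $\mathcal{W}$ in $(\C^*)^3$. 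The corresponding characters $\chi$ therefore satisfy $QH(L_{1,1}^{P};\chi)\neq 0$, so $L_{1,1}^{P}$ is nondisplaceable.

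The main technical concern is the sign analysis: \emph{a priori} Proposition \ref{fiveclasses} only tells us $\nu(A)\in\{\pm 1\}$, and one might worry that the specific sign pattern could prevent $\mathcal{W}$ from having critical points. The resolution is the combinatorial observation above, that the two-parameter gauge freedom in rescaling $(t_2,t_3)$ by $\pm 1$ exactly matches the two-dimensional space of possible sign ambiguities in the four monomials carrying $t_2^{\pm 1}$ and $t_3^{\pm 1}$. Thus regardless of the precise spin-structure signs one computes, the critical point equation is equivalent to the positive-coefficient one solved above.
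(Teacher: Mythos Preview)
Your overall strategy coincides with the paper's: identify $Hom(H_1(L_{1,1}^{P};\Z),\C^*)$ with $(\C^*)^3$, write down the superpotential from Propositions~\ref{onlyclasses} and~\ref{fiveclasses}, and exhibit a critical point. Your computation of the five boundary classes and the resulting Laurent polynomial is correct, and it agrees (after a change of variables and relabelling of the $\epsilon_j$) with the paper's formula.

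The gap is in your sign-absorption step. Under the substitution $(t_1,t_2,t_3)\mapsto(\sigma_1 t_1,\sigma_2 t_2,\sigma_3 t_3)$ with $\sigma_j\in\{\pm1\}$, the five coefficients transform as
\[
(\epsilon_1,\epsilon_2,\epsilon_3,\epsilon_4,\epsilon_5)\longmapsto(\sigma_1\epsilon_1,\ \sigma_1\sigma_3\epsilon_2,\ \sigma_1\sigma_2\sigma_3\epsilon_3,\ \sigma_1\sigma_2\sigma_3\epsilon_4,\ \sigma_1\sigma_3\epsilon_5),
\]
since the monomials $t_3/(t_1t_2)$ and $t_2/(t_1t_3)$ carry the \emph{same} character of the sign group, as do $t_3/t_1$ and $1/(t_1t_3)$. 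Thus all five signs can be made $+1$ only when $\epsilon_3=\epsilon_4$ and $\epsilon_2=\epsilon_5$, and Proposition~\ref{fiveclasses} gives no such relation. Your claim that ``the two-parameter gauge freedom \dots\ exactly matches the two-dimensional space of possible sign ambiguities'' is therefore false: the orbit space has four elements, not one.

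The paper avoids this problem by not normalizing the signs at all and instead solving the critical-point equations for arbitrary $\epsilon_j\in\{\pm1\}$. In your variables this goes as follows: writing $\mathcal{W}=\epsilon_1 t_1+t_1^{-1}F(t_2,t_3)$, the equation $\partial_{t_2}F=0$ gives $t_2^2=\epsilon_3\epsilon_4 t_3^2$; substituting $t_2=\mu t_3$ with $\mu^2=\epsilon_3\epsilon_4$ makes $\partial_{t_3}F=0$ reduce to $t_3^2=\epsilon_2\epsilon_5$; finally $\partial_{t_1}\mathcal{W}=0$ gives $t_1^2=\epsilon_1 F$, and one checks that for at least one of the four choices of $(\mu,t_3)$ the value $F=2(\epsilon_2 t_3+\epsilon_4\mu)$ is nonzero. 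This yields a critical point regardless of the signs, and is what you should replace your last two paragraphs with.
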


\begin{proof}
We identify $Hom(H_1(L_{1,1};\Z),\C^*)$ with $(\C^*)^{3}$ by the map $\chi\mapsto(\chi(\partial B),\chi(\partial C_1),\chi(\partial D))$.   Since $2\partial B=\partial C_1+\partial C_2$ we see from Proposition \ref{fiveclasses} that the superpotential is given in terms of this identification by \[ \mathcal{W}(x,y,z)=\frac{1}{z}\left(\ep_1 x+\ep_2 xy^{-1}+ \ep_3 x^{-1}y +\ep_4 x^{-1}   \right)+\ep_5 z
\] where each $\ep_j\in \{-1,1\}$.  Routine calculation shows that any triple $(x,y,z)$ with the properties that $x^2=\ep_1\ep_4$, $y^2=\ep_1\ep_2\ep_3\ep_4$, $y\neq -\ep_1\ep_2$, and $z^2=\frac{2\ep_4\ep_5(y+\ep_1\ep_2)}{xy}$ will be a critical point of $\mathcal{W}$.  Thus the corollary follows from \cite[Proposition 3.3.1]{BC12}.
\end{proof}

\end{document}